\DeclarePairedDelimiter{\floor}{\lfloor}{\rfloor}
\newtheorem{thm}{Theorem}[section]
\newtheorem{prop}[thm]{Proposition}
\newtheorem{lem}[thm]{Lemma}
\newtheorem{cor}[thm]{Corollary}
\newtheorem{conj}[thm]{Conjecture}
\newtheorem{const}[thm]{Construction}
\newtheorem{question}[thm]{Question}
\theoremstyle{definition}
\newtheorem{definition}[thm]{Definition}
\newtheorem{example}[thm]{Example}
\theoremstyle{remark}
\newtheorem{remark}[thm]{Remark}
\numberwithin{equation}{section}
\newcommand{\R}{\mathbb{R}}  % The real numbers.
\newcommand{\Z}{\mathbb{Z}}  % The integers.
\newcommand{\Q}{\mathbb{Q}}  % The rational numbers.
\newcommand{\bd}{\partial}  % Boundary.
\newcommand{\Mxi}{(M, \xi)} % (M, \xi)
\newcommand{\Sigphi}{(\Sigma,\phi)} % (\Sigma, \phi)
\newcommand{\PD}{\mathrm{PD}\,} % Poincaré Dual
\newcommand{\Spinc}{\mathrm{Spin}^{\textit{c}}}
\newcommand{\vect}[2]{\left(\begin{matrix} #1 \\ #2 \end{matrix}\right)} % vector
\newcommand{\vects}[2]{\left(\begin{smallmatrix} #1 \\ #2 \end{smallmatrix}\right)} % inline vector
\newcommand{\matrixb}[4]{{\left(\begin{matrix}#1 & #2 \\ #3 & #4\end{matrix} \right)}} % matrix
\newcommand{\cf}{\widehat{CF}}
\newcommand{\hf}{\widehat{HF}}
\begin{document}

\begin{abstract}
We study the effect of surgery on transverse knots in contact 3-manifolds.  In particular, we investigate the effect of such surgery on open books, the Heegaard Floer contact invariant, and tightness.  The overarching theme of this paper is to show that in many contexts, surgery on transverse knots is more natural than surgery on Legendrian knots.

Besides reinterpreting surgery on Legendrian knots in terms of transverse knots, our main results on are in two complementary directions: conditions under which inadmissible transverse surgery (\textit{cf.\@} positive contact surgery on Legendrian knots) preserves tightness, and conditions under which it creates overtwistedness.  In the first direction, we give the first result on the tightness of inadmissible transverse surgery for contact manifolds with vanishing Heegaard Floer contact invariant.  In particular, inadmissible transverse surgery on the connected binding of a genus $g$ open book that supports a tight contact structure preserves tightness if the surgery coefficient is greater than $2g-1$.  In the second direction, along with more general statements, we deduce a partial generalisation to a result of Lisca and Stipsicz: when $L$ is a Legendrian knot with $tb(L) \leq -2$, and $|rot(L)| \geq 2g(L)+tb(L)$, then contact $(+1)$-surgery on $L$ is overtwisted.
\end{abstract}

\title{Transverse Surgery on Knots in Contact 3-Manifolds}
\author{James Conway}
\address{University of California, Berkeley}
\email{conway@berkeley.edu}
\maketitle

%%%%%%%%%%%%%%%%%%%%%%%%%
%%%%%%%%%%%%%%%%%%%%%%%%%
\section{Introduction}
%%%%%%%%%%%%%%%%%%%%%%%%%
%%%%%%%%%%%%%%%%%%%%%%%%%

On contact $3$-manifolds, surgery on Legendrian knots is a well-storied affair, but much less well-studied is surgery on transverse knots.  In this paper, we will systematically study transverse surgery: its relation to surgery on Legendrian knots; an algorithm for modifying an open book to describe the result of transverse surgery on a binding component; tightness results for inadmissible transverse surgeries on certain fibred knots; and cases where inadmissible transverse surgery results in an overtwisted manifold.  Along the way, we will see that many existing results regarding surgery on Legendrian knots are more naturally expressed in the language of transverse surgery.

%%%%%%%%%%%%%%%%%%%%%%%%%
\subsection{Admissible and Inadmissible Transverse Surgery}
%%%%%%%%%%%%%%%%%%%%%%%%%

Ding and Geiges \cite{DG:surgery} show that any contact manifold can be obtained by contact surgery on a Legendrian link in $S^3$ with its standard contact structure.  Our first result translates surgery on a Legendrian link into surgery on a transverse link.  Thus, combined with a previous result of Baldwin and Etnyre \cite{BE:transverse}, we can say the following:

% !!!!!!!!!!!!!!!!!!!!!!!!!
\begin{thm}\label{thm:transverse surgery gets everything} Every contact 3-manifold $\Mxi$ can be obtained by transverse surgery on some oriented link in $(S^3,\xi_{\rm{std}})$.  In particular, a single contact $(-1)-$surgery (resp.\ $(+1)-$surgery) on a Legendrian knot corresponds to admissible (resp.\ inadmissible) transverse surgery on a transverse knot. \end{thm}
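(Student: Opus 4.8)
\emph{Proof proposal.} The plan is to set up a precise dictionary relating the standard neighborhood of a Legendrian knot to that of its transverse push-off, read off the two signs of contact surgery as the admissible and inadmissible flavors of transverse surgery, and then feed this into the Ding--Geiges theorem to obtain the first sentence.

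First I would recall the local models. A Legendrian knot $L$ has a standard neighborhood $\nu(L)$: a solid torus with convex boundary carrying exactly two dividing curves, of slope equal to the contact framing $\mathrm{tb}(L)$ in a fixed framing, and the unique tight contact structure inside. A transverse knot $T$ has a standard neighborhood with linear characteristic foliation on its boundary, and transverse surgery removes such a neighborhood and reglues a solid torus: one calls the result \emph{admissible} when the new meridian slope lies on the side of the characteristic-foliation slope facing the core and one fills with the tight contact structure whose core is again transverse, and \emph{inadmissible} when the meridian slope lies on the other side and one fills with the prescribed (in general Giroux-twisted) model. The geometric input I need --- which should either follow from Baldwin--Etnyre \cite{BE:transverse} or be produced directly by a convex-surface / Legendrian-realization argument --- is that a perturbation of $\partial\nu(L)$ with linear characteristic foliation bounds a standard transverse neighborhood of the positive transverse push-off $T_+$ of $L$, so that $\nu(L)$ is, up to contact isotopy, a standard neighborhood of $T_+$; under the identification of $\partial\nu(L)$ obtained this way, the contact $(-1)$-framing $\mathrm{tb}(L)-1$ lies on the admissible side of the limiting characteristic-foliation slope, and the contact $(+1)$-framing $\mathrm{tb}(L)+1$ lies on the inadmissible side.

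Granting this, the two cases run as follows. \textbf{Contact $(-1)$-surgery on $L$} removes $\nu(L)$ and reglues the tight solid torus whose meridian has slope $\mathrm{tb}(L)-1$, with the new core a transverse (dual) knot; since $\mathrm{tb}(L)-1$ lies on the admissible side and, by Honda's classification of tight contact structures on solid tori, this filling is the unique tight one with the prescribed boundary, the operation is precisely admissible transverse surgery on $T_+$ with slope $\mathrm{tb}(L)-1$. \textbf{Contact $(+1)$-surgery on $L$} instead reglues the solid torus with meridian slope $\mathrm{tb}(L)+1$; here one checks --- again using Honda's classification on solid tori and on $T^2\times I$ --- that absorbing this solid torus into a standard neighborhood of $T_+$ produces exactly the contact solid torus prescribed by inadmissible transverse surgery, so contact $(+1)$-surgery on $L$ is inadmissible transverse surgery on $T_+$ with slope $\mathrm{tb}(L)+1$. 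The first sentence then follows by combining these two cases with Ding--Geiges \cite{DG:surgery}: every $\Mxi$ is obtained from $(S^3,\xi_{\mathrm{std}})$ by contact $(\pm1)$-surgery on a Legendrian link, and performing the components independently and applying the above replaces each $(-1)$-surgery (resp.\ $(+1)$-surgery) by admissible (resp.\ inadmissible) transverse surgery on a transverse push-off, exhibiting $\Mxi$ as transverse surgery on a transverse link (with $(S^3,\xi_{\mathrm{std}})$ itself the empty surgery).

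I expect the main obstacle to be the bookkeeping buried in the geometric-input paragraph and in the $(+1)$ case: one must fix conventions so that the contact framing and the integer surgery coefficient translate correctly into the slope coordinate in which admissibility is defined, and then verify that the tight solid torus glued in by contact $(+1)$-surgery, once merged with a standard neighborhood of $T_+$, is the \emph{same} contact solid torus as the model defining inadmissible transverse surgery --- an identification that rests on Honda's uniqueness statements for tight contact structures on $T^2\times I$ and on solid tori. Once this slope dictionary is pinned down, both cases of the ``in particular'' clause, and hence the theorem, follow.
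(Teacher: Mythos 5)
Your proposal is correct and follows essentially the same route as the paper: the $(-1)$ case is exactly the Baldwin--Etnyre identification of a Legendrian standard neighbourhood with a standard neighbourhood of the positive transverse push-off, the $(+1)$ case is the paper's Proposition~\ref{prop:+1 is transverse} (matching the glued-in solid torus of contact $(+1)$-surgery with that of inadmissible transverse $1$-surgery via the uniqueness of the tight contact structure on a solid torus with the given boundary data), and the global statement then follows from Ding--Geiges. The only cosmetic difference is that you sketch re-deriving the $(-1)$ case rather than simply citing \cite{BE:transverse}, as the paper does.
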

% !!!!!!!!!!!!!!!!!!!!!!!!!

When translating back from transverse surgery to surgery on Legendrian knots, the situation is not as nice.  Baldwin and Etnyre \cite{BE:transverse} showed that for a large range of slopes, admissible transverse surgery corresponds to negative contact surgery on a Legendrian link in a neighbourhood of the transverse knot, but that for certain slopes, admissible transverse surgery was not comparable to any surgery on a Legendrian link.  For inadmissible surgeries, we show:

% !!!!!!!!!!!!!!!!!!!!!!!!!
\begin{thm}\label{Transverse=Legendrian} Every inadmissible transverse surgery on a transverse knot corresponds to a positive contact surgery on a Legendrian approximation. \end{thm}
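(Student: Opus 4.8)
The plan is to reduce the statement to a local computation in a standard neighborhood $\nu T\cong S^1\times D^2$ of the transverse knot $T\subset\Mxi$, and then to realize every inadmissible transverse surgery by playing Legendrian approximations with many stabilizations against contact $r$-surgery for $r>0$. Fix a meridian $\mu$ and a longitude $\lambda$ on $\partial\nu T$. Convex perturbations of the boundaries of sub-solid-tori $\nu'T\subseteq\nu T$ realize dividing slopes ranging over a half-open interval with endpoint the ``extreme'' slope $s_0$ beyond which regluing is inadmissible. Since transverse surgery is supported in $\nu T$ and any two standard neighborhoods are contactomorphic, it suffices to show: for every slope $s$ on the inadmissible side of $s_0$ there is a Legendrian approximation of $T$ sitting inside $\nu T$ and a coefficient $r>0$ such that contact $r$-surgery on it excises $\nu T$ and glues back the solid torus prescribed by inadmissible transverse surgery of slope $s$, with the correct contact structure.

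For each $n\ge 0$ let $L_n$ be the $n$-fold negative stabilization of a Legendrian approximation of $T$, positioned as a Legendrian ruling curve on a convex torus $\partial(\nu_n T)$ with $\nu_{n+1}T\subset\nu_n T\subset\nu T$; each negative stabilization preserves the transverse pushoff and shifts the contact framing of $L_n$, viewed as a slope on $\partial\nu T$, by one, so these framings march off toward $s_0$. By Theorem~\ref{thm:transverse surgery gets everything}, contact $(+1)$-surgery on $L_n$ is an inadmissible transverse surgery on $T$; bookkeeping framings identifies its slope $s_n$ as the extreme inadmissible slope compatible with $L_n$, and the $s_n$ accumulate at $s_0$. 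For general $r=p/q>0$ I would invoke the Ding--Geiges algorithm: contact $r$-surgery on $L_n$ is contact $(+1)$-surgery on $L_n$ followed by negative contact surgeries on Legendrian pushoffs of $L_n$, all lying near the core $C$ of the newly glued solid torus. By Baldwin--Etnyre's correspondence applied to $C$ — where the slopes involved lie in the ``large range'' away from $C$'s own extreme slope, so their gap does not intervene — those negative surgeries amount to an admissible transverse surgery on $C$; composing, and using Honda's classification of tight contact structures on solid tori to glue the two regluings together, contact $r$-surgery on $L_n$ is again a regluing of $\nu T$, at a slope $s(n,r)$ determined by $n$ and the continued fraction of $r$.

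The crux is the slope-matching: one must verify that $\{\,s(n,r):n\ge 0,\ r\in\Q_{>0}\,\}$ is \emph{precisely} the set of inadmissible slopes, and that the contact structure filled in agrees with the one transverse surgery produces. The latter follows by comparing the two local gluing recipes directly, since both are pinned down by the germ on $\partial\nu T$ together with the smooth gluing data. The former is a Farey-graph computation: in contrast with the admissible case — where a bounded number of stabilizations together with negative coefficients cannot reach the slopes closest to $s_0$, which is exactly Baldwin--Etnyre's gap — the positive coefficients $r\in(0,\infty)$ sweep out, for each fixed $n$, a full interval of slopes running from $s_n$ to the far end, and since the $s_n$ accumulate at $s_0$ these overlapping intervals cover everything beyond $s_0$. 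Carrying out this continued-fraction bookkeeping and checking the endpoint behaviour as $r\to 0^+$, as $r\to\infty$, and at $n=0$ is where the real work lies; the rest is assembling the cited ingredients.
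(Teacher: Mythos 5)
Your outline follows the ``direct'' route that the paper only gestures at (the remark after Theorem~\ref{thm:transverse surgery gets everything}), rather than the paper's actual proof, which puts the transverse knot in the binding of an open book, realises Legendrian approximations as push-offs of the binding on pages of stabilised open books, and matches the explicit open books for inadmissible surgery from Section~\ref{sec:construction} against the Ding--Geiges algorithm.  That route is legitimate in principle, but as written your argument has a genuine gap exactly at the step you dismiss in one line: you claim the contact structure glued back into $\nu T$ ``is pinned down by the germ on $\partial\nu T$ together with the smooth gluing data.''  This is false in general, and its failure is the whole point of the theorem's refinement in the introduction: for contact $r$-surgery with $r>1$ (or non-integral $r$ with long continued fraction), the complement germ and the meridian slope are shared by several non-isotopic contact structures $\xi^{\pm}_r$ (and overtwisted ones), distinguished precisely by the signs of the basic slices filling in the region between the old boundary slope and the new meridian.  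Uniqueness of the extension holds only in the situation of Theorem~\ref{solid torus}, i.e.\ when the dividing slope and the meridian are Farey-adjacent, which is exactly why Proposition~\ref{prop:+1 is transverse} works for $(+1)$-surgery and does not extend verbatim.  So to close the argument you must either decompose the uniformly twisting layer appearing in the definition of inadmissible transverse surgery into basic slices and show they all carry the same (negative) sign, or carry out your proposed factorisation --- inadmissible $(tb(L_n)+1)$-surgery via Proposition~\ref{prop:+1 is transverse}, then admissible surgery on the dual core via Baldwin--Etnyre, then the composition law for transverse surgeries --- honestly at the level of contact structures, checking that the Legendrian knots on which the negative surgeries are performed are the Ding--Geiges push-offs with the correct stabilisation signs.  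That identification, not the Farey bookkeeping, is where the content lies.

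A second, related shortfall: the theorem as used later in the paper (e.g.\ Corollary~\ref{Legendrian Surgery on Stabilisation} and Lemma~\ref{lem:model OBD HF}) needs the sharper statement that inadmissible transverse surgery equals the specific surgery $\xi^-_r$ with all negative bypass layers, which the paper's open-book proof delivers by construction; your sketch, even if repaired, only yields ``some positive contact surgery'' unless you track the stabilisation signs through Baldwin--Etnyre's correspondence and the Ding--Geiges reduction.  Minor points: the single ``extreme slope $s_0$'' picture is off (boundary slopes of standard neighbourhoods of $T$ accumulate at the meridian, and inadmissible surgery is defined at every slope, with the correspondence requiring only $tb(L_n)$ below the target slope, which your stabilisation argument does provide), and the composition ``inadmissible then admissible equals one inadmissible surgery'' is used but not justified, though the paper also invokes a version of it.
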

% !!!!!!!!!!!!!!!!!!!!!!!!!

For contact $r$-surgeries on oriented Legendrian knots where the surgery coefficient $r$ has numerator and denominator larger than 1, there is more than one choice of contact structure for the surgery.  These correspond to ways to extend the contact structure over the surgery torus, and can be pinned down by choosing signs on bypass layers used to construct the surgery torus.  We show that inadmissible transverse surgery corresponds to the contact structure $\xi^-_r$ obtained by choosing all negative bypass layers.  Because of this, results that have been proved with some difficulty in the Legendrian setting are shown to be more naturally results about transverse knots and transverse surgery.  As an example of this, we reprove a result of Lisca and Stipsicz \cite{LS:surgery} (in the integral case) and Golla \cite{Golla} (in the rational case).

% !!!!!!!!!!!!!!!!!!!!!!!!!
\begin{cor}\label{Legendrian Surgery on Stabilisation} Let $r > 0$ be a rational number.  For any Legendrian knot $L$, if $L_-$ is a negative stabilisation of $L$, then $\xi_r^-(L)$ is isotopic to $\xi_{r+1}^-(L_-)$. \end{cor}
% !!!!!!!!!!!!!!!!!!!!!!!!!

%%%%%%%%%%%%%%%%%%%%%%%%%
\subsection{Open Book Decompositions}
%%%%%%%%%%%%%%%%%%%%%%%%%

Given any 3-manifold, there is an open book decomposition $\Sigphi$ describing it. Here $\Sigma$ is a surface with boundary and $\phi$ is an orientation preserving diffeomorphism of $\Sigma$ fixing its boundary.  The manifold is reconstructed from $\Sigma$ and $\phi$ as follows.  The mapping torus of $\Sigma$ with monodromy $\phi$ is a 3-manifold with torus boundary components; a closed manifold is now obtained by gluing in solid tori to the boundary components such that such that $\{* \in \bd \Sigma\} \times S^1$ bounds a disc and a longitude (the \textit{binding}) is mapped to the boundary components of the surfaces (the \textit{pages}).  We can define a contact structure on an open book decomposition that respects the decomposition, turning the binding into a transverse link.  Open books support a unique contact structure up to isotopy, and thanks to work of Giroux \cite{Giroux:OBD}, contact structures are supported by a unique open book up to a stabilisation operation.

Baker, Etnyre, and van Horn-Morris \cite{BEVHM} demonstrated that rational open books, \textit{ie.\@} those push-offs of the boundary of the pages are non-integral cables of the binding components (or more than one boundary component of a page meet the same binding component), also support a unique contact structure up to isotopy.  The curve that a page traces out on the boundary of a neighbourhood of the binding is calling the \textit{page slope}. Baker, Etnyre, and van Horn-Morris demonstrated that topological surgery on a binding component induces a rational open book, and if the surgery coefficient is less than the page slope, then the induced open book supports the contact structure obtained by admissible transverse surgery on that binding component.  We look at surgery on a binding component with coefficient greater than the page slope and show the following.

% !!!!!!!!!!!!!!!!!!!!!!!!!
\begin{thm}\label{OBDThm} The open book induced by surgery on a binding component with coefficient greater than the page slope supports the contact structure coming from inadmissible transverse surgery on the binding component. \end{thm}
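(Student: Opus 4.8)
The plan is to compare the contact structure $\xi'$ supported by the surgered rational open book of \cite{BEVHM} with the contact structure $\xi^-_s$ obtained by inadmissible transverse surgery of coefficient $s$ on the binding component $K$, by first matching the two on the complement of $K$ and then on a neighbourhood of $K$. Write $p$ for the page slope; by hypothesis $s > p$, which is exactly the range on which inadmissible (rather than admissible) transverse surgery is defined. Let $\nu(K)$ be a standard transverse neighbourhood of $K$, which is also a neighbourhood of the binding component. In the Baker--Etnyre--van Horn-Morris model the surgered open book has underlying surface $\Sigma' \supset \Sigma$ and monodromy $\phi'$ agreeing with $\phi$ away from a collar of the boundary component of $\Sigma$ corresponding to $K$; hence the fibred pieces presenting $M \setminus \nu(K)$ and $M' \setminus \nu(K)$ coincide. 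A contact structure supported by an open book, restricted to the complement of a neighbourhood of the binding, is the standard contact mapping torus of the monodromy and is determined up to isotopy rel boundary; therefore $\xi|_{M\setminus\nu(K)} = \xi'|_{M\setminus\nu(K)}$, and by definition of transverse surgery the same piece agrees with $\xi^-_s$ on $M\setminus\nu(K)$ as well. It thus suffices to identify the contact solid torus $V \subset M'$ glued in by the rational open book (a neighbourhood of the new binding component $K'$, the surgery core) and show it agrees, rel boundary and with the correct gluing, with the solid torus glued in by inadmissible transverse surgery.

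Next I would describe $(V,\xi'|_V)$ explicitly from the BEVHM picture. Its boundary is convex with two dividing curves, whose slope I compute from $p$ and $s$ via the continued-fraction expansion of $s - p$ that governs how the new pages spiral around $K'$. Because $s > p$, the new pages wrap so that $\nu(K) \subset V$ and $V \setminus \nu(K)$ is a non-trivial toric layer; I read off which basic slices occur, and their signs, from the boundary-parallel (fractional) Dehn twists by which $\phi'$ differs from $\phi$ in the collar of $K$. Since a supported contact structure comes from a Thurston--Winkelnkemper construction in which all pages enter positively, these Dehn twists all have the sign that, under the convex-surface dictionary, corresponds to attaching \emph{negative} bypass layers to $\partial\nu(K)$.

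It then remains to match this with the definition of inadmissible transverse surgery. Following Baldwin--Etnyre, inadmissible transverse surgery of coefficient $s > p$ is obtained by enlarging $\nu(K)$ across a toric layer of the prescribed dividing slopes and then filling so that the meridian has slope $s$, and $\xi^-_s$ is the structure in which every bypass layer of that enlargement is chosen negative. By the previous step the open book produces exactly this enlargement: the dividing slopes match the computation above and the bypass signs are all negative, so the filling solid tori are glued the same way and carry the same contact structure rel boundary. Combined with the reduction of the first paragraph, this gives $\xi' = \xi^-_s$, which is the assertion of the theorem.

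The main obstacle is the sign bookkeeping: one must verify that the orientation of the boundary-parallel Dehn twists appearing in $\phi'$ (forced by having $s$ on the large side of the page slope) really does translate into Baldwin--Etnyre's \emph{negative} bypass layers rather than positive ones, not merely into a layer with the right dividing slopes. I would pin this down in an explicit local model -- $\nu(K) = S^1 \times D^2$ with the page characteristic foliation spiralling at slope $p$ -- performing the surgery there and reading the bypasses directly off the resulting family of convex tori, cross-checking against the integral case $s = p+1$, where the answer must be contact $(+1)$-surgery on a Legendrian approximation of $K$ by Theorem~\ref{Transverse=Legendrian} (equivalently by Theorem~\ref{thm:transverse surgery gets everything}). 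A possible shortcut avoiding part of this is to instead realise $K'$ as a binding component of the new open book along which admissible transverse surgery of a coefficient \emph{below} its page slope returns $\Mxi$, and to invoke the Baker--Etnyre--van Horn-Morris theorem in that direction together with the fact that inadmissible transverse surgery is inverted by an admissible transverse surgery.
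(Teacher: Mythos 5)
There is a genuine gap, and it sits exactly where you flag your ``main obstacle.'' The smooth rational open book induced by surgery on the binding carries no sign data at all: in the Baker--Etnyre--van Horn-Morris model it has the \emph{same} page and the \emph{same} monodromy as $\Sigphi$, with only the solid-torus gluing changed so that pages meet the new binding in non-longitudinal curves -- there are no boundary-parallel (fractional) Dehn twists in $\phi'$ from which to ``read off'' basic-slice signs. Determining which contact structure (equivalently, which signs on the toric layer between $\bd\nu(K)$ and the new surgery torus) the induced open book supports is precisely the content of the theorem, so asserting that the Thurston--Winkelnkemper construction ``forces'' negative bypass layers, and deferring the verification to an unspecified local model, leaves the proof incomplete: two contact structures agreeing on $M\setminus\nu(K)$ and having glued-in tori with the same boundary slopes can still differ (indeed one can be tight and the other overtwisted), so the sign bookkeeping is not a finishing touch but the whole theorem. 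Moreover, your description of $\xi^-_s$ as ``inadmissible transverse surgery with all bypass layers chosen negative'' is not Baldwin--Etnyre's definition (which is via contact cuts on a uniformly twisting layer, with no bypass choices); in this paper the identification of inadmissible transverse surgery with the all-negative choice is Theorem~\ref{Transverse=Legendrian}, which is proved \emph{using} the open book results, so invoking it (or using it as your integral-case cross-check) is circular within the paper's logic. Your proposed shortcut has its own gap: knowing that some admissible transverse surgery on the dual knot $K'$ returns $\Mxi$ does not characterize $\xi'$, since admissible surgery discards the solid torus around $K'$ and therefore cannot distinguish different fillings (or extra twisting) there.

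For comparison, the paper avoids convex-surface sign bookkeeping altogether: it first proves the case of surgery coefficient $1/n$ (relative to the page slope), either by Legendrian-realizing $n$ leaves of a slope-$0$ pre-Lagrangian torus on pages and applying Proposition~\ref{prop:plusminus one surgery} together with Proposition~\ref{prop:+1 is transverse} (when $|\bd\Sigma|\geq 2$), or in general by writing down an explicit contact form $h(r)\,dz + k(r)\,d\theta$ on the surgery torus that patches with $\alpha$ and checking directly that it is supported by the induced open book; it then decomposes any inadmissible $s$-surgery with $s$ above the page slope as an inadmissible $1/n$-surgery followed by an admissible surgery and quotes Theorem~\ref{BEV OBD Thm} for the latter. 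If you want to salvage your direct approach, the work you must actually carry out is the explicit identification of the contact structure on the neighbourhood of the new binding in the supported structure -- which is essentially the paper's contact-form computation in different clothing.
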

% !!!!!!!!!!!!!!!!!!!!!!!!!

Compare this with Hedden and Plamenevskaya \cite{HP}, who discuss the same construction in the context of Heegaard Floer invariants.  They track properties of the contact structure induced by surgery on the binding of an open book.  We can now identify the contact structure they discuss as the one induced by inadmissible transverse surgery.

% !!!!!!!!!!!!!!!!!!!!!!!!!
\begin{const}\label{OBD construction} We construct explicit integral open books that support admissible transverse surgery (less than the page slope) and inadmissible transverse surgery (with any surgery coefficient) on a binding component of an open book. \end{const}
% !!!!!!!!!!!!!!!!!!!!!!!!!

%%%%%%%%%%%%%%%%%%%%%%%%%
\subsection{Tight Surgeries}
%%%%%%%%%%%%%%%%%%%%%%%%%

The Heegaard Floer package provided by Ozsváth and Szabó \cite{OS:hf1, OS:hf2} gives very powerful 3-manifold invariants, which are still being mined for new information.  Ozsváth and Szabó have shown \cite{OS:contact} how a fibred knot gives rise to an invariant $c(\xi)$ of the contact structure supported by the open book induced by the fibration.  They proved that the non-vanishing of this invariant implies the contact structure is tight.  Hedden and Plamenevskaya \cite{HP} showed that the same set-up and invariant exists when the knot is \textit{rationally fibred}, \textit{ie.\@} it is rationally null-homologous and fibred.

In light of Theorem~\ref{OBDThm}, Hedden and Plamenevskaya proved that if $K$ is a fibred knot in $\Mxi$ supporting $\xi$, and the contact class of $\xi$ is non-vanishing, then inadmissible transverse $r$-surgery, for $r \geq 2g$, where $g$ is the genus of $K$, preserves the non-vanishing of the contact invariant.  We extend this in two ways: first, we increase the range of the surgery coefficient to allow $r > 2g-1$, and second, we replace the non-vanishing of the Heegaard Floer contact invariant with the weaker condition of tightness.

% !!!!!!!!!!!!!!!!!!!!!!!!!
\begin{thm}\label{thm:preserve tightness} If $K$ is an integrally fibred transverse knot in $\Mxi$ supporting $\xi$, where $\xi$ is tight (resp.\ has non-vanishing Heegaard Floer contact invariant), then inadmissible transverse $r$-surgery for $r > 2g-1$ results in a tight contact manifold (resp.\ a contact manifold with non-vanishing Heegaard Floer contact invariant).
\end{thm}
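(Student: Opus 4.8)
The plan is to leverage Theorem~\ref{OBDThm}, which identifies inadmissible transverse $r$-surgery on the fibred knot $K$ with the contact structure supported by an explicit (rational) open book, together with the naturality of the Ozsv\'ath--Szab\'o contact class under the maps induced by surgery cobordisms. First I would recall the Baker--Etnyre--van~Horn-Morris machinery: since $K$ is integrally fibred of genus $g$, its page slope is $2g-1$ (measured with respect to the Seifert framing), so for any $r > 2g-1$ the surgery coefficient exceeds the page slope and Theorem~\ref{OBDThm} applies, presenting $(M_r, \xi_r)$ as supported by the open book $(\Sigma', \phi')$ obtained by the prescribed modification of $(\Sigma, \phi)$. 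The key point is to describe $(\Sigma', \phi')$ concretely enough — this is essentially Construction~\ref{OBD construction} — that one can read off a Legendrian or symplectic cobordism from $(M, \xi)$ to $(M_r, \xi_r)$, or alternatively a sequence of Legendrian surgeries, realizing the surgery as a composition of handle attachments of the right sign.

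Next I would run the contact-invariant argument. Recall that $c(\xi) \ne 0$ is exactly the hypothesis (in the non-vanishing case), and that for the genus-$g$ fibred knot the invariant $c(\xi)$ sits in $\HFK$ of $(M,K)$ in the top Alexander grading via the Ozsv\'ath--Szab\'o identification of the contact class with the generator detected by the fibredness. The surgery $r > 2g-1$ is large enough (in the sense of the ``large surgery formula'' for knot Floer homology) that the Floer homology of $M_r$ in the relevant $\mathrm{Spin}^c$ structure is computed by a subquotient complex of $\cf(M,K)$ supported in Alexander gradings near the top, and the cobordism map sends the contact generator to the contact generator. Concretely: one shows the map on $\hf$ induced by the surgery 2-handle (with its natural $\mathrm{Spin}^c$ decoration coming from the open book) carries $c(\xi)$ to $c(\xi_r)$; since $r > 2g-1$ the grading bookkeeping forces this map to be nonzero on the class in question, so $c(\xi_r) \ne 0$, hence $(M_r, \xi_r)$ is tight.

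For the weaker tightness statement the plan is to bypass $c(\xi)$ entirely and argue by contradiction: if $\xi_r$ were overtwisted, then by Theorem~\ref{Transverse=Legendrian} (and the identification of inadmissible transverse surgery with the $\xi^-$ contact surgery) one can trace the overtwisted disc back through the open book $(\Sigma', \phi')$, and the compatibility of the construction — in particular the fact that $\phi'$ differs from $\phi$ by \emph{positive} Dehn twists (this is where $r > 2g-1$, i.e.\ the coefficient exceeding the page slope, enters: it guarantees the added twisting is right-handed) — shows that $(\Sigma, \phi)$ would already have to violate the right-veering obstruction of Honda--Kazez--Mati\'c, contradicting tightness of $\xi$. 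Equivalently, one inverts the surgery: admissible transverse surgery on the dual knot in $(M_r,\xi_r)$ recovers $(M,\xi)$, and overtwistedness would propagate backwards.

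The main obstacle I expect is the grading and $\mathrm{Spin}^c$-structure bookkeeping in the second paragraph — specifically, pinning down exactly which $\mathrm{Spin}^c$ structure on $M_r$ supports the surgered contact class and verifying that the cobordism map is nonzero there precisely when $r > 2g-1$ rather than $r \ge 2g$. This is the step that sharpens Hedden--Plamenevskaya's bound, so it cannot be quoted and must be done by hand: one needs the precise large-surgery identification of $\cf(M_r, \spins)$ with the appropriate truncation $\hat{A}_s$ of the knot complex, together with the fact that the top-graded generator (the contact class) survives to the truncation exactly in the range $r > 2g-1$. The right-veering argument for the tightness-only statement is comparatively soft, but making the "trace the disc back through $(\Sigma',\phi')$" step rigorous requires the explicit monodromy from Construction~\ref{OBD construction} and a careful check that no left-handed twisting is introduced.
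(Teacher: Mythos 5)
Your plan breaks down precisely at the two points where the theorem goes beyond Hedden--Plamenevskaya, and the tightness-only argument is based on a factual error. Inadmissible transverse surgery with coefficient above the page slope changes the monodromy by \emph{negative} Dehn twists about the binding (Theorem~\ref{thm:OBD connected binding}: the new open book is $(\Sigma,\phi\circ D_K^{-n})$, and the model monodromies $\phi_{g,n}$ contain a left-handed twist), not by positive ones; if the added twisting were right-handed the statement would be trivially true for every $r>0$, which is false (surgery coefficients below the page slope, and the link surgeries of Theorem~\ref{thm:link surgery}, give overtwisted results). Relatedly, the page slope of an integrally fibred knot is the Seifert slope $0$, not $2g-1$; the number $2g-1$ does not come from Theorem~\ref{OBDThm} at all. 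The right-veering contradiction also has no mechanism: Honda--Kazez--Mati\'c characterise tightness by \emph{every} supporting open book being right-veering, so overtwistedness of $\xi_r$ only guarantees that \emph{some} open book for $\xi_r$ fails right-veering, not the one induced by the surgery, and there is no way to ``trace the disc back.'' Nor does overtwistedness propagate backwards through admissible surgery on the dual knot --- the overtwisted disc may intersect the surgery torus --- and if that argument worked it would show tightness is preserved by \emph{all} inadmissible surgeries.

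For the non-vanishing half, your large-surgery/knot-Floer sketch is essentially Hedden--Plamenevskaya's route, and the one step you defer (why the bound is $r>2g-1$ rather than $r\geq 2g$) is exactly the new content; nothing in the proposal supplies it. The paper proceeds differently: it constructs model open books $(\Sigma_{g,n},\phi_{g,n})$ realising inadmissible transverse $(2g-1+1/n)$-surgery on the binding of $(\Sigma_g^1,\mathrm{id})$, proves $F_{-Y_n}=0$ by exhibiting an explicit closed surface in the cobordism (built from the rational Seifert surface and copies of the $2$-handle core, with the double points resolved) whose square exceeds $2g-2$ precisely because $(2g-1)n-(2g-2)>0$ (Theorem~\ref{hfadjunction}), deduces injectivity of $F_{-X_n}$ from the exact triangle, and gets $c(\xi_{g,n})\neq 0$ by induction starting from the known $2g$ case. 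The step your plan lacks entirely is the passage from trivial monodromy to arbitrary monodromy: the open book for the surgery on a general fibred $K$ is $(\Sigma_{g,n},\phi\circ\phi_{g,n})$, and the paper invokes the monoid property of tight (Wand) and non-vanishing-invariant monodromies (Theorem~\ref{thm:monoid}), together with the reduction of general $r>2g-1$ to $(2g-1+1/n)$-surgery followed by negative contact surgery. Some such mechanism is unavoidable for the tightness statement when $c(\xi)=0$, since no cobordism-map argument on contact classes can see it.
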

% !!!!!!!!!!!!!!!!!!!!!!!!!

Given a knot $K \subset M$, consider $M \times [0,1]$, where $K \subset M \times \{1\}$.  Let $g_4(K)$ be the minimum genus of a surface in $M \times [0,1]$ with boundary $K$.  When $K \subset \Mxi$ is a non-fibred transverse knot, and there is a Legendrian approximation $L$ of $K$ with $tb(L) = 2g-1$, then we can conclude the following theorem.  Since its proof is very similar to that of Theorem~\ref{thm:preserve tightness}, we omit it; note that this result can also be extracted from the proof of a similar result of Lisca and Stipsicz for knots in $(S^3,\xi_{\rm{std}})$ \cite{LS:hf1}.

% !!!!!!!!!!!!!!!!!!!!!!!!!
\begin{thm}\label{thm:non-fibred} If $K$ is a null-homologous transverse knot in $\Mxi$ with $g_4(K) > 0$, where $\xi$ has non-vanishing contact class, and there exists a Legendrian approximation $L$ of $K$ with $tb(L) = 2g_4(K)-1$, then inadmissible transverse $r$-surgery on $K$ preserves the non-vanishing of the contact class for $r > 2g_4(K)-1$. \end{thm}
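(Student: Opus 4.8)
The plan is to imitate the proof of Theorem~\ref{thm:preserve tightness}, with the hypothesis $tb(L)=2g_4(K)-1$ taking over the role that fibredness played there. First I would use Theorem~\ref{Transverse=Legendrian} to rewrite inadmissible transverse $r$-surgery on $K$ as positive contact surgery on the Legendrian approximation $L$ with every bypass layer chosen negative, so that the result is the contact structure $\xi^-$; since $r>2g_4(K)-1=tb(L)$, this contact surgery has positive coefficient $r-tb(L)$. Passing if necessary to a single negative stabilisation of $L$ (which leaves $g_4$ unchanged), I may assume the contact coefficient is at least $1$ — this is the step that distinguishes the strict bound $r>2g_4(K)-1$ from the cleaner $r\geq 2g_4(K)$ — and then the Ding--Geiges--Stipsicz algorithm decomposes the surgery as one contact $(+1)$-surgery on $L$, producing $(M_1,\xi_1)$, followed by finitely many contact $(-1)$-surgeries, i.e.\ Legendrian surgeries, along negatively stabilised push-offs in the intermediate contact manifolds. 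The all-negative choice of bypasses is exactly what forces each of these stabilisations to be negative and makes the output $\xi^-$.

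Each Legendrian surgery is harmless: if $(Y',\xi')$ is obtained from $(Y,\xi)$ by Legendrian surgery, then the reversed $2$-handle cobordism induces a map $\hf(-Y')\to\hf(-Y)$ carrying $c(\xi')$ to $c(\xi)$, so $c(\xi)\neq 0$ forces $c(\xi')\neq 0$. Iterating this along the chain from the previous paragraph, it suffices to show that the single contact $(+1)$-surgery preserves non-vanishing, that is, that $c(\xi_1)\neq 0$.

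This last point is the crux, and the place where maximality of $tb(L)$ is essential: for a general positive contact surgery $c(\xi_1)$ can vanish — the surgered manifold can be overtwisted — since the $2$-handle cobordism $W\colon M\to M_1$ runs in the unhelpful direction for a formal preservation statement. Here I would cap a genus-$g_4(K)$ surface for $K$ in $M\times[0,1]$ with the core of the $2$-handle to obtain a closed surface $\widehat\Sigma\subset W$ of genus $g_4(K)$ whose self-intersection is the surgery framing, which is at least $2g_4(K)-1$ precisely because $r>2g_4(K)-1$. The extremal $\Spinc$ structure on $W$ cut out by the adjunction relation for $\widehat\Sigma$ then pins down the summand of $\hf(-M_1)$ containing $c(\xi_1)$; feeding into the associated surgery exact triangle (or mapping cone) for $K$ the fact, from the previous paragraph, that the accompanying Legendrian $(-1)$-surgery on $L$ has non-vanishing contact invariant, one identifies $c(\xi_1)$ with a non-zero class. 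In the language of the Legendrian invariant and the large surgery formula, the point is that $tb(L)=2g_4(K)-1$ forces the class of $\HFK(-M,K)$ mapping to $c(\xi)$ to sit in the Alexander grading singled out by $g_4(K)$, so that it survives the surgery map to a non-zero element of $\hf(-M_1)$; this is exactly what the top-Alexander-grading computation for a fibred knot supplies in Theorem~\ref{thm:preserve tightness}. The main obstacle is making this precise: pinning down the extremal $\Spinc$ structure, checking that the triangle or mapping-cone maps behave as asserted, and, when $r$ is not an integer, carrying the whole discussion through the rational surgery formula. Everything else is bookkeeping imported directly from the fibred case, which is why the details are omitted.
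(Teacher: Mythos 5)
Your skeleton — peel off the positive part of the surgery, control it with a surgery exact triangle plus the adjunction inequality applied to a capped genus-$g_4$ surface, and let Legendrian surgeries absorb the rest — is the right spirit, but the stabilisation shortcut opens a genuine gap exactly in the range that makes this theorem stronger than its predecessors. When $2g_4(K)-1<r<2g_4(K)$ you stabilise $L$ once so that the contact coefficient exceeds $1$, and your chain then passes through contact $(+1)$-surgery on (a push-off of) $L_-$, which is topological $(tb(L_-)+1)=(2g_4(K)-1)$-surgery on $K$, i.e.\ inadmissible transverse $(2g_4(K)-1)$-surgery. The contact invariant of that intermediate manifold vanishes in general: for the right-handed trefoil in $(S^3,\xi_{\rm{std}})$ (which satisfies all hypotheses, with $g_4=g=1$, $tb(L)=1$), Theorem~\ref{transversegolla} gives $c(\xi^-_1)=0$ since $1<2\tau$. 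So the statement you reduce everything to, ``$c(\xi_1)\neq 0$ for the single $(+1)$-surgery'', is false precisely in the window $2g_4-1<r<2g_4$, and no concatenation of ``each step preserves non-vanishing'' arguments can pass through this stage. The crux paragraph cannot repair this: after stabilising, the capped surface in $W$ has square exactly $2g_4-1$ (the coefficient $r$ no longer enters), and Theorem~\ref{hfadjunction} is a \emph{vanishing} statement for cobordism maps, so applied to $W$ it works against you (while $-W$, whose map actually carries $c(\xi)$ to $c(\xi_1)$, contains the surface with negative square, so adjunction says nothing); the appeal to an extremal $\Spinc$ structure, the mapping cone, and Alexander gradings is too vague to assess, and if it proved what you assert it would prove the false vanishing-free statement above. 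As written, your argument only recovers the range $r\geq 2g_4(K)$, i.e.\ the Hedden--Plamenevskaya-type bound.

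The way to reach $r>2g_4(K)-1$ — and the route the omitted proof takes, mirroring Lemmas~\ref{lem:cobordism vanishing} and~\ref{lem:model OBD HF} — is to keep the positive part at contact coefficient $1/n$ rather than pushing it to an integer: write inadmissible transverse $r$-surgery as inadmissible transverse $(2g_4-1+1/n)$-surgery, i.e.\ contact $(+1/n)$-surgery on $L$ realised as contact $(+1)$-surgeries on $n$ parallel push-offs of $L$ (no stabilisation), followed by negative contact surgeries, which preserve non-vanishing. Non-vanishing for the $1/n$-stage is proved by induction on $n$ using the exact triangle whose third manifold is the $(2g_4-1)$-surgery: the adjunction inequality is applied not to the $(+1)$-surgery cobordism but to the cobordisms $-Y_n$ emanating from that third manifold, with the genus-$g_4(K)$ surface in $M\times[0,1]$ playing the role the Seifert surface plays in the fibred case — capped with the handle core in the base case (square $2g_4-1>2g_4-2$), and for $n\geq 1$ the cabled surface with $2g_4-1$ boundary components and resolved double points as in Lemma~\ref{lem:cobordism vanishing}. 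This kills $F_{-Y_n}$, makes the $(+1)$-surgery map injective, and transports $c(\xi)$ to a nonzero class; your bookkeeping for the Legendrian surgeries then finishes. (Incidentally, the paper proves Theorem~\ref{thm:preserve tightness} via model open books and the monoid of Theorem~\ref{thm:monoid}, not a top-Alexander-grading computation; that machinery is what handles tightness, which has no analogue here, but the Heegaard Floer half is as just described.)
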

% !!!!!!!!!!!!!!!!!!!!!!!!!

Mark and Tosun \cite{MT} determine exactly when inadmissible transverse surgery on knots in $(S^3, \xi_{\rm{std}})$ preserves the non-vanishing of the contact class.  Their condition includes $sl(K) = 2\tau(K)-1$, where $\tau(K) \leq g_4(K)$ is the Heegaard Floer tau invariant, and allows for surgeries $r > 2 \tau(K) - 1$.  We do not recover their results entirely, as they do not require that there exists a Legendrian approximation with $tb = 2g-1$. However, we also discuss knots outside of $S^3$, which the result of Mark and Tosun (and results of Lisca and Stipsicz \cite{LS:surgery} and Golla \cite{Golla} that they generalise) does not.  Hedden and Plamenevskaya's \cite{HP} result does discuss knots outside of $S^3$, and Theorem~\ref{thm:preserve tightness} is a generalisation of their result.  In particular, Theorem~\ref{thm:preserve tightness} does not require non-vanishing of the contact invariant.

One might hope for an analogue of Theorem~\ref{thm:preserve tightness} for links, \textit{ie.\@} that given an open book with multiple boundary components supporting a tight contact structure, sufficiently large inadmissible transverse surgery on every binding component might yield a tight manifold.  We construct examples to show that no such theorem can exist.  In particular, we construct the following examples.

% !!!!!!!!!!!!!!!!!!!!!!!!!
\begin{const}\label{link not tight} For every $g \geq 0$ and $n \geq 2$, there is an open book of genus $g$ with binding a link with $n$ components supporting a Stein fillable contact structure, where anytime inadmissible transverse surgery is performed on all the binding components, the result is overtwisted. \end{const}
% !!!!!!!!!!!!!!!!!!!!!!!!!

Seeing that tightness and non-vanishing of the Heegaard Floer contact invariant is preserved under inadmissible transverse surgery for large enough surgery coefficients, it is natural to ask whether other properties are preserved.  Other properties that are preserved under contact $(-1)$-surgery are those of the various types of fillability: Stein, strong, or weak fillability.  Results of Eliashberg \cite{Eliashberg:stein}, Weinstein \cite{Weinstein}, and Etnyre and Honda \cite{EH:non-fillable} show that contact $(-1)$-surgery preserves fillability in each of these categories.

Given a contact manifold $\Mxi$, and a cover $M' \to M$, there is an induced contact structure $\xi'$ on $M'$.  We say $\Mxi$ is \textit{universally tight} if $\xi$ is tight and the induced contact structure on every cover is also tight; otherwise, we call the contact structure \textit{virtually overtwisted}.  Since the fundamental group of a compact $3$-manifold is residually finite (by geometrization), universal tightness is equivalent to requiring that all finite covers of $\Mxi$ remain tight \cite{Honda:classification1}.

For large surgery coefficients, inadmissible transverse surgery adds a very small amount of twisting to the contact manifold.  One might therefore expect that a sufficiently large surgery coefficient might preserve properties of the original contact structure, as in Theorem~\ref{thm:preserve tightness} and several other prior results.  However, using classification results of Honda \cite{Honda:classification2} and calculations of Lisca and Stipsicz \cite{LS:non-fillable}, we construct examples of open books of all genera that show that inadmissible transverse surgery does not in general preserve the property of being universally tight, nor any of the fillability categories, even with arbitrarily large surgery coefficient.

% !!!!!!!!!!!!!!!!!!!!!!!!!
\begin{const}\label{non-fillable} For every $g$, there is a transverse knot of genus $g$ in a Stein fillable universally tight contact manifold, where sufficiently large inadmissible transverse surgery preserves the non-vanishing of the contact class, but where no inadmissible transverse surgery is universally tight or weakly semi-fillable. \end{const}
% !!!!!!!!!!!!!!!!!!!!!!!!!

%%%%%%%%%%%%%%%%%%%%%%%%%
\subsection{Overtwisted Surgeries}
%%%%%%%%%%%%%%%%%%%%%%%%%

Lisca and Stipsicz \cite{LS:hfnotes} show that given a Legendrian $L \subset (S^3,\xi_{\rm{std}})$ with $tb(L) \leq -2$, contact $(+1)$-surgery on $L$ has vanishing Heegaard Floer contact invariant.  It is natural to ask whether these are overtwisted.  We answer this question for a large class of knots, not just in $S^3$, leaving out only a finite set of $(tb,rot)$ pairs for each knot genus $g$.  A sample result of this type is as follows.

% !!!!!!!!!!!!!!!!!!!!!!!!!
\begin{thm}\label{overtwistedsurgery} Let $L$ be a null-homologous Legendrian in $\Mxi$, where $c_1(\xi)$ is torsion, with $tb(L) \leq -2$ and $|rot(L)| \geq 2g(L)+tb(L)$, and let $T$ be a positive transverse push-off of $L$.  Then inadmissible transverse $(tb(L)+1)$-surgery on $T$ is overtwisted.  If $rot(L) > 2g(L)+1+tb(L)$ (that is, if $sl(T) < -2g(L) - 1$), then all inadmissible transverse surgeries on $T$ are overtwisted. \end{thm}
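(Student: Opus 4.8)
The plan is to translate the transverse surgery into a contact surgery on $L$ and then detect overtwistedness via a convex-surface / bypass argument. By Theorem~\ref{Transverse=Legendrian}, inadmissible transverse $(tb(L)+1)$-surgery on the positive transverse push-off $T$ is the contact structure $\xi^-$ obtained from contact $(+1)$-surgery on $L$ with all negative bypasses. So it suffices to show this particular contact surgery is overtwisted. The first step is to use the standard neighbourhood of $L$: contact $(+1)$-surgery replaces a standard neighbourhood $N(L)$ (a solid torus with convex boundary of two dividing curves of slope $tb(L)$) by a new solid torus whose meridian is glued to the contact framing $+1$ curve. I would set up coordinates so that the resulting surgery torus has a convex boundary whose dividing set forces, after edge-rounding with the complement, the existence of a convex torus parallel to $\partial N(L)$ with dividing slope $0$ — equivalently, a Legendrian curve bounding an overtwisted disk — unless the complement obstructs it. Concretely, since $\xi^-$ uses all negative stabilisations / negative bypasses, the surgery solid torus has a tight contact structure that is the "most negative" one, and I want to show that the Legendrian divide or ruling curve on the surgery torus has a Seifert framing that is too large relative to its Thurston–Bennequin-type bound.

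The second step, which is where the hypotheses $tb(L)\le -2$ and $|rot(L)|\ge 2g(L)+tb(L)$ enter, is a genus bound. Take the push-off $T$ and cap off: after $(tb(L)+1)$-surgery, $T$ becomes a transverse knot $T'$ in the surgered manifold that bounds a surface of genus $g(L)$ (the Seifert surface of $L$, suitably modified through the surgery region) whose self-linking number I can compute. The relevant fact is the slice–Bennequin / Hedden–Plamenevskaya genus bound for the rational open book of Theorem~\ref{OBDThm}: the core of the surgery torus becomes the binding of an open book, and $T'$ (or rather a Legendrian approximation of it) violates the Thurston–Bennequin inequality $tb \le 2g-1$ in the surgered manifold. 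Specifically, I would compute that the contact framing of a Legendrian approximation of $T'$ minus the Seifert framing is $tb(L)+1 - \big(\text{writhe correction}\big)$, and that the rotation number hypothesis $|rot(L)| \ge 2g(L)+tb(L)$ forces this quantity to be $\ge 2g(L)$, so $tb \ge 2g(L)$ for a knot of genus $g(L)$ — contradicting tightness via the Bennequin bound, hence the surgered manifold is overtwisted.

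For the second assertion, under the stronger hypothesis $tb(L)-rot(L) \le -2g(L)-1$, I would argue that $L$ destabilises: writing $tb - rot$ as the Thurston–Bennequin number of the positive transverse push-off and comparing with $2g(L)-1$, this inequality says the positive push-off $T$ has self-linking number $sl(T) = tb(L)-rot(L) \le -2g(L)-1 < 2g(L)-1$, with room to spare. Then any inadmissible transverse surgery on $T$ — equivalently, by Theorem~\ref{Transverse=Legendrian}, positive contact surgery on some Legendrian approximation $L'$ of $T$ — contains a contact $(+1)$-surgery on an $L'$ with $tb(L') \le -2$ satisfying the same rotation-number inequality after accounting for the stabilisations needed to realise the surgery slope; applying the first part of the theorem to each such piece (or running the open-book genus bound directly on the resulting rational open book from Theorem~\ref{OBDThm} and Construction~\ref{OBD construction}) shows every such surgery is overtwisted.

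The main obstacle I anticipate is the careful bookkeeping in the genus and framing computation through the surgery region: one must pin down precisely how the Seifert surface of $L$ (or $T$) is modified when passing to the surgered manifold, track the resulting rational Seifert genus and rational self-linking number in the possibly-non-$S^3$ ambient manifold (using the $c_1(\xi)$ torsion hypothesis to make the relevant relative Chern/rotation number well-defined), and verify that the inequality $|rot(L)| \ge 2g(L)+tb(L)$ is exactly the threshold that pushes the Bennequin bound over the edge. The identification in the first step — that the surgery torus for $\xi^-$ really does produce an overtwisted disk rather than merely failing the contact-invariant non-vanishing — is the conceptual heart, and I expect it to follow from combining Theorem~\ref{OBDThm} (which gives the open book for inadmissible transverse surgery) with the sharpness of the Bennequin inequality for supported open books.
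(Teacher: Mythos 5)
Your general direction---trade the transverse surgery for contact $(+1)$-surgery on $L$ via Theorem~\ref{Transverse=Legendrian} and then violate a Bennequin-type bound in the surgered manifold---is the same in spirit as the paper's, but the execution has two genuine gaps. First, the quantitative mechanism you propose is wrong. After $(tb(L)+1)$-surgery the relevant knot (the surgery dual, equivalently the image $L_0^*$ of a push-off of $L$) is only \emph{rationally} null-homologous, of order $|tb(L)+1|$, so the classical inequality $tb \le 2g-1$ is not available; one needs the bound for non-loose rationally null-homologous knots, $-|tb_\Q| + |rot_\Q| \le -\chi/r$ (Lemma~\ref{lem:S bound}), together with a computation of the Euler characteristic of a rational Seifert surface (a cabling argument as in Theorem~\ref{thm:n surgery OT}; it is not simply ``the Seifert surface of $L$ pushed through,'' although for $n=1$ the answer happens to be $1-2g$). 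More importantly, your claimed violation ``$tb \ge 2g(L)$'' cannot occur: the surgery formula gives $tb_\Q(L_0^*) = tb(L)/(tb(L)+1) < 1$, so the framing term is small, and the violation of the bound must come from the rotation number. This forces one to compute $rot_\Q$ of the (stabilised) dual through the surgery, which is exactly why the paper proves Lemma~\ref{lem:GO}, extending the Lisca--Ozsv\'ath--Stipsicz--Szab\'o/Geiges--Onaran formulas to a general ambient $\Mxi$ with $c_1(\xi)$ torsion; the hypothesis $|rot(L)| \ge 2g(L)+tb(L)$ enters only there. Your step 1 (edge-rounding to a convex torus of dividing slope $0$ ``unless the complement obstructs it'') is not an argument and cannot become one on its own, since it never uses the rotation-number hypothesis, while positive contact surgeries on large-$|tb|$ knots can be tight (the paper's final proposition), and even for $(+1)$-surgery with $tb \le -2$ only vanishing of the contact invariant is known in general.

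Second, your reduction for the last assertion fails. Inadmissible transverse $r$-surgery on $T$ for $r > tb(L)+1$ is obtained from contact $(+1)$-surgery on $L$ by \emph{subsequent negative} contact surgeries, and negative contact surgery does not preserve overtwistedness; so knowing that the closed $(+1)$-surgered manifold is overtwisted says nothing about the larger surgeries, and ``each surgery contains a contact $(+1)$-surgery'' is not a correct containment of contact manifolds. What does embed in every inadmissible transverse surgery on $T$ is the complement of a standard neighbourhood of $L$ with a single negative basic slice attached, taking the dividing slope to the meridian---equivalently, the complement of the negatively stabilised dual $L_-^*$. The correct strengthening is therefore to show $L_-^*$ is \emph{loose}, and this is precisely where the stronger hypothesis $tb(L)-rot(L) \le -2g(L)-1$ (i.e.\ $sl(T) \le -2g(L)-1$) is used: the paper computes $tb_\Q(L_-^*) = -1/(tb(L)+1)$ and $rot_\Q(L_-^*) = -(sl(T)+1)/(tb(L)+1)$ and checks that these violate the bound of Lemma~\ref{lem:S bound}, so the overtwisted disc lives in the complement-plus-negative-bypass and persists in every such surgery. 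Your sketch only ever concludes overtwistedness of one closed surgered manifold, which is insufficient for the ``all inadmissible transverse surgeries'' statement.
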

% !!!!!!!!!!!!!!!!!!!!!!!!!

Note that inadmissible transverse $(tb(L)+1)$-surgery on $T$ is equivalent to contact $(+1)$-surgery on $L$.  Using this and its generalisation to contact $(+n)$-surgery, we derive the following corollary.

% !!!!!!!!!!!!!!!!!!!!!!!!!
\begin{cor}
For every genus $g$ and every positive integer $n\geq 2$, there is a negative integer $t$ such that if $L$ is a null-homologous Legendrian knot of genus $g$ and $tb(L) \leq t$, then contact $(+n)$-surgery on $L$ is overtwisted.
\end{cor}
% !!!!!!!!!!!!!!!!!!!!!!!!!

Lisca and Stipsicz \cite{LS:hfnotes} show that all contact $(+1)$-surgeries on negative torus knots are overtwisted.  Our results allow for the following generalisation.

% !!!!!!!!!!!!!!!!!!!!!!!!!
\begin{cor}
All inadmissible transverse surgeries on negative torus knots in $(S^3,\xi_{\rm{std}})$ are overtwisted.
\end{cor}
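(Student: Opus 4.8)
The plan is to deduce the corollary directly from Theorem~\ref{overtwistedsurgery}. Fix a negative torus knot type $K \subset (S^3,\xi_{\mathrm{std}})$ and a transverse representative $T$ of $K$. Since $c_1(\xi_{\mathrm{std}}) = 0$ is torsion and every knot in $S^3$ is null-homologous, it suffices to produce a Legendrian knot $L$ whose positive transverse push-off is transversely isotopic to $T$ and which satisfies $tb(L) \le -2$, $|rot(L)| \ge 2g(L) + tb(L)$, and $tb(L) - rot(L) \le -2g(L) - 1$; the last inequality triggers the strong conclusion of Theorem~\ref{overtwistedsurgery}, so that every inadmissible transverse surgery on $T$ is overtwisted.

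I would use two inputs. First, every transverse knot is the positive transverse push-off $T_+(L)$ of some Legendrian knot $L$, with $sl(T_+(L)) = tb(L) - rot(L)$; applied to our $T$, this produces a Legendrian approximation $L$ with $sl(T) = tb(L) - rot(L)$, and necessarily $tb(L) \le \overline{tb}(K)$ since $L$ represents the knot type $K$. Second, by the Etnyre--Honda classification of Legendrian and transverse torus knots, a negative torus knot $K$ has $\overline{tb}(K) \le -2$ and maximal self-linking number $\overline{sl}(K) \le -2g(K) - 1$; in particular $sl(T) \le -2g(K) - 1$ for any transverse representative $T$ of $K$.

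Now fix a Legendrian approximation $L$ of $T$. Then $tb(L) \le \overline{tb}(K) \le -2$, giving the first hypothesis. The third hypothesis is $tb(L) - rot(L) = sl(T) \le -2g(K) - 1 = -2g(L) - 1$, which is exactly the self-linking bound. For the rotation hypothesis, this same inequality rearranges to $2g(L) + tb(L) \le rot(L) - 1$; if $rot(L) \ge 0$ the right side is $< |rot(L)|$, and if $rot(L) < 0$ the left side is negative while $|rot(L)| \ge 0$, so in all cases $|rot(L)| \ge 2g(L) + tb(L)$. Hence $L$ satisfies every hypothesis of Theorem~\ref{overtwistedsurgery}, and its strong conclusion gives that all inadmissible transverse surgeries on $T$ are overtwisted. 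As $T$ was an arbitrary transverse representative of an arbitrary negative torus knot type, the corollary follows.

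The one substantive ingredient is the bound $\overline{sl}(K) \le -2g(K) - 1$ for negative torus knots, and this is the step I expect to require care: it is precisely where the sign of the torus knot enters, since for a positive torus knot $\overline{sl}(K) = 2g(K) - 1 > 0$ and the statement fails completely. The bound is a consequence of the Etnyre--Honda classification of transverse torus knots; alternatively one can obtain it from the Morton--Franks--Williams estimate on self-linking numbers together with the known HOMFLY polynomials of torus knots (and the sharpness of that estimate for positive torus knots), which is a short computation after passing to the mirror. Everything else is bookkeeping: the remaining two hypotheses of Theorem~\ref{overtwistedsurgery} become automatic once one knows $\overline{tb}(K) \le -2$ together with the self-linking bound.
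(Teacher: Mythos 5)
Your argument is correct and is essentially the paper's: the paper likewise deduces the corollary from its overtwisted-surgery criterion by noting (via the Etnyre--Honda classification) that a nontrivial negative torus knot $(-p,q)$, $p,q\geq 2$, has $\overline{sl} = -pq$ while $2g-1 = pq-p-q$, so every transverse representative satisfies the self-linking hypothesis, and the remaining hypotheses are bookkeeping exactly as you describe. The only cosmetic difference is that the paper records the strict inequality $sl < -2g-1$ (which is what the proof of the criterion in the body of the paper actually uses), whereas you state the bound as $\leq -2g-1$; since $-pq < -pq+p+q-2$ for $p,q\geq 2$, the strict version holds and nothing is lost.
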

% !!!!!!!!!!!!!!!!!!!!!!!!!

Note that using the above results, we cannot show that all inadmissible transverse surgeries on the Figure Eight knot in $(S^3,\xi_{\rm{std}})$ are overtwisted.  However, we can obtain this result using convex surface theory methods, see \cite{conway:f8}.

%%%%%%%%%%%%%%%%%%%%%%%%%
\subsection{Knot Invariants}
%%%%%%%%%%%%%%%%%%%%%%%%%

Given a knot type $K$ in a contact manifold $\Mxi$, we can consider all regular neighbourhoods of transverse representatives of $K$.  We define the \textit{contact width} of $K$ be the supremum of the slopes of the characteristic foliation on the boundary of these neighbourhoods.  The following follows from Theorem~\ref{thm:preserve tightness}.  Note that the contact width as first defined in \cite{EH:cables} is the reciprocal of our invariant, as is their convention for the slope of characteristic foliations.

% !!!!!!!!!!!!!!!!!!!!!!!!!
\begin{cor}\label{cor:contact width}
Let $K$ be an integrally fibred transverse knot in a tight contact manifold $\Mxi$ such that the fibration supports the contact structure $\xi$.  If the maximum Thurston--Bennequin number of a Legendrian approximation of $K$ is $2g-1$, then $w(K)=2g-1$.
\end{cor}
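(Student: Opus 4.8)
The plan is to establish the two inequalities $w(K)\ge 2g-1$ and $w(K)\le 2g-1$ separately; only the second will invoke Theorem~\ref{thm:preserve tightness}. For the lower bound, let $L$ be a Legendrian approximation of $K$ with $tb(L)=2g-1$; since $K$ is transversely isotopic to the positive transverse push-off of $L$ and $sl(K)=2g-1=tb(L)-rot(L)$, we have $rot(L)=0$. The boundary of the standard convex neighbourhood of $L$ is a convex torus whose dividing curves have slope $2g-1$ with respect to the Seifert framing, and by Giroux flexibility this torus can be $C^{0}$-perturbed to carry a linear characteristic foliation of any slope close to, but not equal to, $2g-1$. Each such torus bounds a genuine regular neighbourhood of the transverse knot $K$, so $w(K)\ge 2g-1$.

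For the upper bound I would argue by contradiction. Suppose $w(K)>2g-1$; then some transverse representative $T$ of the knot type of $K$ has a regular neighbourhood $N$ with convex boundary of slope $s>2g-1$. A convex torus parallel to $\partial N$ carrying an \emph{integral} dividing slope $n$ would exhibit a Legendrian representative of $K$ with $tb=n$, so the hypothesis that the maximal Thurston--Bennequin number of $K$ is $2g-1$ forces $2g-1<s<2g$; thus $\partial N$ overshoots the slope $2g-1$ by less than a full twist. Using this extra room, I would perform inadmissible transverse $r$-surgery on $K$ for a rational $r$ with $2g-1<r<s$, realising it by regluing inside $N$: the surgery solid torus, carrying the contact structure $\xi^{-}$ dictated by inadmissible transverse surgery, is nested inside the convex torus $\partial N$ of larger slope $s$. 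The resulting closed contact manifold agrees with $\Mxi$ away from $N$ but inserts $\xi^{-}$ across a thickened torus that overshoots the surgery slope, and I expect this to force an overtwisted disc. This would contradict Theorem~\ref{thm:preserve tightness}, which guarantees that every inadmissible transverse $r$-surgery on $K$ with $r>2g-1$ is tight, giving $w(K)\le 2g-1$ and hence $w(K)=2g-1$.

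The step I expect to be the main obstacle is the last one above: converting ``$\partial N$ overshoots $2g-1$ by less than a full twist'' into ``the $\xi^{-}$-surgery is overtwisted'', while keeping enough control of the slope and coefficient conventions that the surgery one performs genuinely has coefficient $r>2g-1$ and so lies in the range covered by Theorem~\ref{thm:preserve tightness}. A related point that must be handled first is promoting a thick neighbourhood of an arbitrary transverse representative of the knot type to a thick standard neighbourhood of $K$ itself --- using that the page is a genus-$g$ Seifert surface realising the self-linking bound, so that the maximal-self-linking transverse representative is transversely isotopic to $K$ --- so that Theorem~\ref{thm:preserve tightness} applies to $K$ directly.
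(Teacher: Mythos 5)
Your skeleton is the same as the paper's (lower bound from the $tb=2g-1$ Legendrian approximation, upper bound by contradicting Theorem~\ref{thm:preserve tightness}), but the crux of the upper bound is missing: you never actually produce the overtwisted disc, and you flag precisely this step as ``the main obstacle.''  The paper closes it in one line, and in a way that makes your extra scaffolding unnecessary.  If some transverse representative has a neighbourhood $N$ with boundary slope $s>2g-1$, do inadmissible transverse $s$-surgery itself (no need to choose $r<s$, hence no need to force $2g-1<s<2g$, and the maximal Thurston--Bennequin hypothesis is not used in this half at all --- it is only needed for the lower bound).  After $s$-surgery the meridian of the surgery solid torus has slope $s$, while $\bd N$ carries Legendrian curves of slope exactly $s$: its Legendrian divides, resp.\ the leaves of its characteristic foliation.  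In the surgered manifold these curves bound meridian discs, and the framing they inherit from the torus agrees with both the contact framing and the disc framing, so they bound overtwisted discs; this contradicts the tightness guaranteed by Theorem~\ref{thm:preserve tightness} since $s>2g-1$.  Your substitute mechanism (``inserts $\xi^-$ across a thickened torus that overshoots the surgery slope'') is not an argument as it stands, and your reduction to $2g-1<s<2g$ also quietly conflates Legendrian approximations of the transverse knot $K$ (which is what the hypothesis constrains) with Legendrian divides on convex tori around a possibly different transverse representative.

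Two further points.  First, your proposed preliminary step --- promoting the large neighbourhood of an arbitrary transverse representative to one of $K$ itself by claiming the maximal-self-linking representative is transversely isotopic to $K$ --- is unjustified: no transverse simplicity is assumed, and the representative realising the large slope need not have maximal self-linking.  (The paper simply treats the neighbourhood as a neighbourhood of $K$; if you want to address this point you need a different argument, not a transverse-simplicity claim.)  Second, in the lower bound, Giroux flexibility cannot perturb a convex torus of dividing slope $2g-1$ into one with linear characteristic foliation of a nearby slope: a linear foliation of slope different from $2g-1$ is not divided by the dividing set (a closed leaf of the characteristic foliation of a convex surface is disjoint from the dividing curves, by the formula $t(\gamma)=-\tfrac{1}{2}\#(\gamma\cap\Gamma)$).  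What actually gives $w(K)\geq 2g-1$ is that a standard neighbourhood of a Legendrian approximation $L$ with $tb(L)=2g-1$ contains standard neighbourhoods of its positive transverse push-off $K$ of every slope less than $2g-1$, with slopes accumulating at $2g-1$; this is how the paper argues, and your conclusion survives once the mechanism is corrected.
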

% !!!!!!!!!!!!!!!!!!!!!!!!!

We also extend an invariant of transverse knots defined by Baldwin and Etnyre \cite{BE:transverse}, and determine its value in certain cases.

%%%%%%%%%%%%%%%%%%%%%%%%%
\subsection{Organisation of Paper}
%%%%%%%%%%%%%%%%%%%%%%%%%

Section~\ref{sec:LegendrianTransverseSurgery} describes the surgery operation on and transverse and Legendrian knots, and the relation between them.  Section~\ref{sec:OBD} describes open book decompositions, and includes the construction of open books supporting transverse surgery on a binding component. Section~\ref{sec:tight surgeries} discusses Heegaard Floer homology, and contains a proof of Theorem~\ref*{thm:preserve tightness}.  Section~\ref{sec:fillability} discusses fillability, universal tightness, and Construction~\ref*{non-fillable}.  Section~\ref{sec:OT surgeries} discusses when the result of surgery can be proved to be overtwisted, and proves Theorem~\ref*{overtwistedsurgery}.

%%%%%%%%%%%%%%%%%%%%%%%%%
\subsection{Acknowledgements}
%%%%%%%%%%%%%%%%%%%%%%%%%

The author would like to thank John Etnyre for his support and many helpful discussions throughout this project.  He would like to thank Kenneth Baker for helpful discussions that led to the results in Section~\ref{sec:OT surgeries}.  The author would further like to thank Bülent Tosun, David Shea Vela-Vick, and an anonymous referee, who made helpful comments on early drafts of this paper.  This work was partially supported by NSF Grant DMS-13909073.

%%%%%%%%%%%%%%%%%%%%%%%%%
%%%%%%%%%%%%%%%%%%%%%%%%%
\section{Contact and Transverse Surgery}
%%%%%%%%%%%%%%%%%%%%%%%%%
%%%%%%%%%%%%%%%%%%%%%%%%%
\label{sec:LegendrianTransverseSurgery}

In this section, we will give a background to the contact geometric concepts used throughout the paper.  We will then describe contact and transverse surgery, and see that all contact surgeries can be re-written in terms of transverse surgeries.

%%%%%%%%%%%%%%%%%%%%%%%%%
\subsection{Background}
%%%%%%%%%%%%%%%%%%%%%%%%%

We begin with a brief reminder of standard theorems about contact structures on $3$-manifolds which we will use throughout this paper.  Further details can be found in \cite{Etnyre:contactlectures,Etnyre:contactlectures1}.

\subsubsection{Farey Graph}\label{sec:farey}
The Farey graph is the $1$-skeleton of a tessellation of the hyperbolic plane by geodesic triangles shown in Figure~\ref{fig:farey}, where the endpoints of the geodesics are labeled.  Our convention is to use cardinal directions (specifically, North, East, South, and West) to denote points on the circle.  These names will always refer to their respective locations, even as the labeling will move around.  There is a standard labeling, shown in Figure~\ref{fig:farey}, which is as follows: let West be labeled $\infty = 1/0$ and East be labeled $0$.  The third unlabeled point of a geodesic triangle with two corners already labeled $a/b$ and $c/d$ is given the label $(a+c)/(b+d)$.  We denote this \textit{Farey sum} operation $a/b \oplus c/d$.  In the Southern hemisphere, by treating $0$ as $0/1$, labeling the corners of geodesic triangles using the Farey sum suffices to label each endpoint of every geodesic with a positive number.  Thus South gets labeled $1 = 1/1$, and so on. In the Northern hemisphere, we label with negative numbers by treating $0$ as $0/(-1)$.  Thus North gets labeled $-1 = 1/(-1)$, and so on.  Every rational number and infinity is found exactly once as a label on the Farey graph.

\begin{remark} For notation purposes, we denote the result of the Farey sum of $a/b$ and $c/d$ by $a/b \oplus c/d$.  We also denote $r\oplus \cdots \oplus r$, where there are $n$ copies of $r$, as $n\cdot r$.  In general, we use the $\oplus$ operation with two fractions even when they are not connected by a geodesic on the Farey graph.  While in this case, there is no direct relation to operations on the Farey graph, we note that if points labeled $r$ and $s$ are connected by a geodesic in the graph, such that $r$ is counter-clockwise of $s$ and $rs > 0$, then $n\cdot r \oplus m \cdot s$ is the label of some point in between those labeled $r$ and $s$ and counter-clockwise of $s$. \end{remark}

Other labelings of the Farey graph can be created by labeling any two points connected by a geodesic with 0 and $\infty$.  We can then construct a labeling by an analogous process to the above.  In fact, this labeling is the result of a diffeomorphism applied to the hyperbolic plane with the standard labeling, but we will not need that fact explicitly here.

\begin{remark} If we consider each label as a reduced fraction in the obvious way (where the denominator of a negative number is negative), then each fraction naturally corresponds to a vector $$\frac{p}{q} \leftrightarrow \vect{q}{p}.$$  The vectors corresponding to $\infty$ and $0$, with which we started our labeling, constitute a basis for $\Z^2$. Hence the vectors corresponding to the endpoints of any geodesic in the graph form a basis for $\Z^2$. \end{remark}

\begin{figure}[htbp]
\begin{center}
\vspace{1cm}
\begin{overpic}[scale=1.7,tics=20]{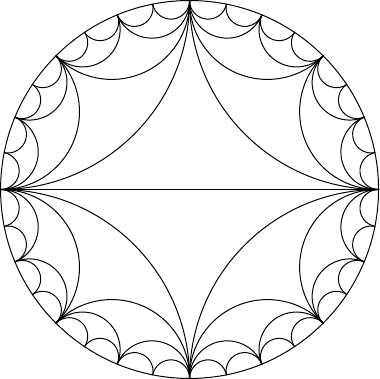}
\put(-13,153){\LARGE $\infty$}
\put(312,152){\LARGE $0$}
\put(153,-11){\LARGE $1$}
\put(147,312){\LARGE $-1$}
\put(29,267){\LARGE $-2$}
\put(-5,216){\LARGE $-3$}
\put(84,314){\LARGE $\displaystyle\frac{3}{-2}$}
\put(267,270){\LARGE $\displaystyle\frac{1}{-2}$}
\put(300,215){\LARGE $\displaystyle\frac{1}{-3}$}
\put(213,307){\LARGE $\displaystyle\frac{2}{-3}$}
\put(267,28){\LARGE $\displaystyle\frac{1}{2}$}
\put(302,87){\LARGE $\displaystyle\frac{1}{3}$}
\put(215,-7){\LARGE $\displaystyle\frac{2}{3}$}
\put(88,-7){\LARGE $\displaystyle\frac{3}{2}$}
\put(38,36){\LARGE $2$}
\put(3,90){\LARGE $3$}
\end{overpic}
\vspace{.2cm}
\caption{The Farey graph}
\label{fig:farey}
\end{center}
\end{figure}

\subsubsection{Negative Continued Fractions}

We can write any negative rational number $r$ as a negative continued fraction (\textit{negative} because we are subtracting each successive fraction instead of the usual addition) in the following form

$$r = a_1+1 - \cfrac{1}{a_2-\cfrac{1}{a_3-\cfrac{1}{\cdots-\frac{1}{a_n}}}},$$ where $a_i \leq -2$ are integers.  We have $a_1 + 1$ so that other results are easier to state.  We denote this negative continued fraction as $r = [a_1+1, a_2, \ldots, a_n]$.  Given any $r$, we can create this decomposition by letting $a_1 + 1 = \floor{r}$; we then set $r' = 1/(a_1+1-r) = [a_2, \ldots, a_n]$, so $a_1 = \floor{r'}$, and so on.

\begin{remark} If $a_n=-1$, then $[a_1+1,a_2,\ldots,a_{n-1},-1] = [a_1+1,a_2,\ldots,a_{n-1}+1]$. \end{remark}

\subsubsection{Classification of Tight Contact Structures on $S^1 \times D^2$ and $T^2 \times [0,1]$}

We will use the following classifications to help define contact surgery.  The number of distinct tight contact structures will correspond to possible choices in performing contact surgery.

% !!!!!!!!!!!!!!!!!!!!!!!!!
\begin{thm}[Kanda \cite{Kanda}]\label{solid torus} Fix a singular foliation on the boundary of $M = S^1 \times D^2$ that is divided by two dividing curves isotopic to the core of $M$.  Then there is a unique tight contact structure on $M$ up to isotopy inducing this singular foliation on the boundary.\end{thm}
% !!!!!!!!!!!!!!!!!!!!!!!!!

Consider the manifold $(T^2 \times I,\xi)$, with $\xi$ tight. Let the two boundary components be convex with two dividing curves each, with slopes $s_0$ and $s_1$.  If $s_0$ and $s_1$ are labels on the Farey graph connected by a geodesic, then the contact manifold is called a \textit{basic slice}, or a \textit{bypass layer}.  If not, then the manifold can be cut up into basic slices along boundary parallel convex tori, following the path between $s_0$ and $s_1$ along the Farey graph.

% !!!!!!!!!!!!!!!!!!!!!!!!!
\begin{thm}[Honda \cite{Honda:classification1}] Fix a singular foliation on the boundary of $M = T^2 \times I$  such that it is divided by two dividing curves on $T^2 \times \{i\}$ for $i = 0,1$ each of slope $s_i$.  There is a diffeomorphism of the Farey graph such that $s_0$ is sent to $-1$ and $s_1$ to $-\frac{p}{q}$, with $p>q>0$.  Let $[a_1+1,a_2, \ldots, a_n]$ be the negative continued fraction expansion of $-\frac{p}{q}$.  Then there are exactly $$|(a_1+2)(a_2+1)\cdots(a_{n-1}+1)a_n|$$ minimally twisting tight contact structures on $M$ (which means that on every convex torus parallel to the boundary of $M$, the slope of the dividing curves is between $s_0$ and $s_1$).  \end{thm}
% !!!!!!!!!!!!!!!!!!!!!!!!!

\begin{remark}\label{remark:basic slice} If $(T^2\times I,\xi)$ is a basic slice, then there are exactly two tight contact structures up to isotopy.  These can be distinguished by their relative Euler class, and after picking an orientation, we can call them \textit{positive} and \textit{negative} basic slices; this orientation is chosen such that when gluing a negative (resp.\ positive) basic slice to the boundary of the complement of a regular neighbourhood of a Legendrian knot, the result is the complement of a regular neighbourhood of its negative (resp.\ positive) stabilisation. \end{remark}

We can denote a choice of tight contact structure on $T^2 \times I$ with dividing curve slopes $s_0$ and $s_1$ by a choice of sign (positive or negative) on each jump of the shortest path on the Farey graph between $s_0$ and $s_1$.  Note that distinct sign assignations can give isotopic contact structures.  We say that the path between $s_0$ and $s_1$ can be \textit{shortened} if there is a sequence of labels $c_i, c_{i+1}, \ldots, c_{j-1},c_j$ on the path such that $c_i$ and $c_j$ are connected by a geodesic on the Farey graph.  If such a sequence exists, and if the signs on the jumps $c_i \to c_{i+1}$, \ldots, $c_{j-1} \to c_j$ are inconsistent, then after gluing the basic slices with boundary slopes $c_k$ and $c_{k+1}$ (for $k = i, \ldots, j-1$), we get a basic slice with boundary slopes $c_i$ and $c_j$, where the contact structure is not isotopic to either of the two tight contact structures guaranteed by Remark~\ref{remark:basic slice}, and thus the induced contact structure is overtwisted.

%%%%%%%%%%%%%%%%%%%%%%%%%
\subsection{Contact Surgery}
%%%%%%%%%%%%%%%%%%%%%%%%%

We take a moment here to give conventions for topological Dehn surgery on an oriented knot $K$ in a $3$-manifold $M$.  Given any longitude $\lambda$ on the boundary of a regular neighbourhood $N$ of $K$, and the meridian $\mu$ of $K$, we define the result $M_{p/q}(K)$ of $p/q$-surgery on $K$ to be the result of removing the interior of $N$ and gluing in a $D^2 \times S^1$ (sometimes called the \textit{surgery torus}) such that the curves parallel to $p\mu + q\lambda$ on $\bd(M\backslash N)$ bound a $D^2$ in $D^2 \times S^1$.  The \textit{(surgery) dual knot} to $K$ from this surgery is the image of the core of $D^2 \times S^1$ in $M_{p/q}(K)$.

Now, given a contact manifold $\Mxi$ and a knot $L$ in $M$, $L$ is \textit{Legendrian} if its tangent vector always lies in $\xi$.  All Legendrian knots in this paper are oriented.

The framework for surgery on Legendrian knots, termed \textit{contact surgery}, arose out of classic work of Eliashberg \cite{Eliashberg:stein}, with a modern description provided by work of Kanda \cite{Kanda} and Honda \cite{Honda:classification1}.  This description of contact surgery is based on work of Honda \cite{Honda:classification1}.

\begin{definition} In this paper, whenever surgery is performed in the contact category on a Legendrian knot, we use the adjective ``contact'', and in \textit{contact $r$-surgery}, the $r$ is with respect to the contact framing; thus, contact $(-1)$-surgery is topologically Dehn surgery with framing 1 less than the contact framing.  In some contact geometry papers, parentheses are used to distinguish surgeries with respect to contact framings from the standard topological surgery notation.  This paper does not make such a distinction; we will be using surgery coefficients sufficiently complicated to warrant parentheses, and we do not wish to add ambiguities by having parentheses have special meaning.

Given a Legendrian knot $L \subset \Mxi$, we first remove a standard neighbourhood of $L$, \textit{ie.\@} a tight solid torus with convex boundary, where the dividing curves have the same slope as the contact framing $f$ (when $L$ is null-homologous, $f = tb(L)\mu + \lambda$, where $\mu$ is a meridian and $\lambda$ is the Seifert framing of $L$).  To do \textit{contact $r$-surgery}, where $r$ is any non-zero rational number, we first choose the shortest counter-clockwise path from $0$ to $r$ in the Farey graph.  Let the labels along this path be $c_1=0, \ldots, c_k = r$.  For each jump $c_1\to c_2,\ldots,c_{k-2}\to c_{k-1}$, we glue in a basic slice of either sign ($+$ or $-$), where the back face of the basic slice corresponding to $c_i \to c_{i+1}$ has dividing curves of slope $f+c_i$ and the front face has dividing curves of slope $f+c_{i+1}$.  Finally, glue in a solid torus with meridional slope $f+c_k$ and with dividing curves on the boundary of slope $f+c_{k-1}$ (which according to Theorem~\ref{solid torus} has a unique tight contact structure up to isotopy).  The result is $(M_{f+r}(L),\xi_{f+r})$.

Contact surgery with negative (resp.\ positive) framing (relative to the natural contact framing) is called \textit{negative (resp.\ positive) contact surgery} (negative contact surgery is often called \textit{Legendrian surgery} in the literature).  Note that many choices were made along the way, and in general there is no well-defined contact surgery.  When $r = 1/m$ for some integer $m \neq 0$, however, there is a unique contact surgery.  When $r > 1$ is an integer, there are two possibilities, depending on the sign of the single basic slice used to perform the surgery.  In general, there are two preferred possibilities: one where all negative signs are chosen, and one where all positive signs are chosen.  We denote these two possibilities by $\xi^-_r$ and $\xi^+_r$.  When not otherwise indicated, contact $r$-surgery will refer to $\xi^-_r$. \end{definition}

We now describe Ding and Geiges's algorithm \cite{DG:surgery} for converting a general contact surgery diagram into one only involving contact $(\pm 1)$-surgeries.

\begin{const}\label{DingGeiges}
Let $L$ be a Legendrian knot, $r < 0$ be a rational number, and let $[a_1 + 1, a_2, \ldots, a_n]$ be its negative continued fraction decomposition.  We define $L_1 \cup \cdots \cup L_n$ as follows: $L_1$ is a Legendrian push-off of $L$, stabilised $|a_1 + 2|$ times, and $L_i$ is a Legendrian pushoff of $L_{i-1}$, stabilised $|a_i + 2|$ times, for $i = 2, \ldots, n$.  We perform contact $(-1)$-surgery on each $L_i$.  This is equivalent to a contact $r$-surgery on $L$.  The choice of stabilisation is important, but we distinguish two natural choices: choosing all negative stabilisations gives $\xi^-_r$, whereas choosing all positive stabilisations gives $\xi^+_r$ on $M_{tb(L)+r}(L)$.  These choices are equivalent to choices of signs on basic slice layers involved in the surgery.

If $r = p/q > 0$, let $n$ be a positive integer such that $1/n \leq r< 1/(n-1)$.  We take $n$ push-offs of $L$, and do contact $(+1)$-surgery on them, and do contact $r'$-surgery on $L$, where $r' = p/(q-np) < 0$.  Again, we have a choice of stabilisations for the contact $r'$-surgery, and we label the two distinguished choices as above.
\end{const}

%%%%%%%%%%%%%%%%%%%%%%%%%
\subsection{Transverse Surgery}
%%%%%%%%%%%%%%%%%%%%%%%%%

Given a contact manifold $\Mxi$ and a knot $T$ in $M$, $T$ is \textit{transverse} if its oriented tangent vector is always positively transverse to $\xi$ (note that we require an orientation on $T$ for this to make sense).  Work of Gay \cite{Gay:transverse} and Baldwin and Etnyre \cite{BE:transverse} provide a natural set-up for transverse surgery, building off of classic constructions of Lutz \cite{Lutz:s3,Lutz:circlebundles} and Martinet \cite{Martinet}.  Transverse surgery comes in two flavours: \textit{admissible} (``removing twisting'' near the knot) and \textit{inadmissible} (``adding twisting'' near the knot).  This description of transverse surgery follows Baldwin and Etnyre \cite{BE:transverse}.

\begin{definition}\label{def:transverse surgery}
Given a transverse knot $K\subset \Mxi$, a standard neighbourhood is contactomorphic to $S^1 \times \{r \leq a\}$ in $(S^1 \times \R^2, \xi_{\rm{rot}}=\ker(\cos r\, dz + r\sin r\,d\theta))$ for some $a$, where $z$ is the coordinate on $S^1$, $K$ is identified with the $z$ axis, and some framing of $K$ is identified with $\lambda = S^1 \times \{r = a,\theta = 0\}$.  The characteristic foliation on the torus $\{r = r_0\}$ is given by parallel lines of slope $-\cot r_0 / r_0$ (\textit{ie.\@} $r_0\lambda -\cot(r_0)\mu$, where $\mu$ is a meridian of $K$).  Note that there is not a unique $r_0$ corresponding to a given slope.  A neighbourhood of $K$ is identified with the solid torus $S^n_a$ bounded by the torus $$T^n_a = \{r = r_0\,\,|\,\,\mbox{$r_0$ is the $n$th smallest positive solution to }-\frac{\cot r_0}{r_0} = a\},$$ for some slope $a \in \R\cup \{\infty\}$ and some positive integer $n$.  To perform \textit{admissible transverse surgery}, we take a torus $T^m_b$ inside $T^n_a$, where $m\leq n$ and if $m = n$, then $b < a$.  We remove the interior of $S^m_b$ from $S^n_a$, and perform a contact cut on the boundary.  See Lerman \cite{Lerman} and Baldwin and Etnyre \cite{BE:transverse} for details. This gives us a smooth manifold $(M_b(K),\xi^m_b)$ with a well-defined contact structure.  To perform \textit{inadmissible transverse surgery}, consider the open manifold obtained by removing the knot $K$.  We can take the closure of this manifold to get one with torus boundary, and uniquely extend the contact structure so that along the boundary, the characteristic foliation is given by curves of slope $-\infty$, that is, meridional slope (the minus sign is not strictly needed, but it serves as a reminder that the characteristic foliations of nearby boundary-parallel tori have large negative slope).  This is the inverse operation of a contact cut.  We then glue on a $T^2 \times I$ with a contact structure modeled  on $\xi_{\rm{rot}}$ above, such that the contact planes twist out to some slope $b$ (we can add $n$ half-twists of angle $\pi$ before stopping at $b$, and there is no restriction on $b$).  After performing a contact cut on the new boundary, we are left with the manifold $(M_b(K),\xi^n_b)$. Unless otherwise stated, we will assume $n=0$ and omit $n$. \end{definition}

\begin{remark} Although inadmissible transverse surgery is well-defined, the result of admissible transverse surgery in general depends on the torus neighbourhood.  For example, Etnyre, LaFountain, and Tosun \cite{ELT:cables} show that the $(2,3)$-torus knot in $(S^3,\xi_{\rm{std}})$ has infinitely many distinct standard neighbourhoods which are not subsets of each other.  It can be shown that for each rational number $\frac{1}{n+1} < r < \frac{1}{n}$, there are $n$ distinct results of admissible transverse surgery. \end{remark}

Recall that Baldwin and Etnyre \cite{BE:transverse} showed that contact $(-1)$-surgery on a Legendrian $L$ is equivalent to an admissible transverse surgery on the positive transverse push-off of $L$.  We show the equivalent result for positive contact surgeries.

% !!!!!!!!!!!!!!!!!!!!!!!!!
\begin{prop}\label{prop:+1 is transverse} Let $L$ be a Legendrian knot in some contact manifold.  Then the contact manifold obtained via contact $(+1)$-surgery on $L$ can also be obtained via an inadmissible transverse surgery on a transverse push-off $K$ of $L$.
\end{prop}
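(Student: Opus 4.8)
The plan is to compare the two surgeries directly on the level of the contact-geometric model near the knot, using the classification of tight contact structures on $T^2 \times I$ (and on the solid torus) reviewed above. Both operations replace a solid-torus neighbourhood of the knot by a new solid torus equipped with some tight contact structure; I want to show the two resulting contact solid tori, glued back in along the same boundary framing, are contactomorphic rel boundary.

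First I would set up coordinates. Fix a Legendrian knot $L$ with $tb(L) = t$, so a standard neighbourhood $N(L)$ has convex boundary with two dividing curves of slope $t$ (in the basis where $\mu$ is the meridian and $\lambda$ the Seifert framing). Let $K$ be the positive transverse push-off of $L$; a standard transverse neighbourhood of $K$ sits inside $N(L)$, and by Baldwin--Etnyre the region between $\partial N(K)$ (characteristic foliation of some slope) and $\partial N(L)$ is a $T^2 \times I$ layer — in fact one can choose $N(K)$ so that this layer is a single basic slice, and the standard neighbourhood of $K$ corresponds to pushing the boundary slope toward $-\infty$ (slope $-\infty$ being the page/contact-cut slope for the transverse knot). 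Concretely, contact $(+1)$-surgery on $L$ is, by definition, the shortest counterclockwise Farey path from $t$ to $t+1$: since $t$ and $t+1$ are adjacent on the Farey tessellation, this is a single jump, so we glue in one solid torus with meridional slope $t+1$ and boundary slope $t$ — no basic slice is involved, hence there is a unique such contact surgery, matching the well-known fact that contact $(+1)$-surgery is canonical.

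Next, I unwind the transverse side. Inadmissible transverse surgery on $K$ with coefficient $b$: remove $K$, complete to a manifold with torus boundary of characteristic foliation slope $-\infty$, glue on a uniformly twisting $T^2 \times I$ running from slope $-\infty$ out to slope $b$, then perform a contact cut. I would choose the transverse surgery coefficient $b$ exactly so that, after re-expressing everything relative to $\partial N(L)$, the total picture is: the contact structure on the complement of $N(L)$ is unchanged, and inside we have a $T^2\times I$ from slope $t$ (outer, $= \partial N(L)$) to slope $-\infty$ (the contact-cut slope), followed by the solid torus glued along slope $-\infty$. The claim $t+1$ corresponds to $b$ will come from a Farey-tessellation computation: slopes $t$, $t+1$, and $-\infty$ (i.e. $1/0$) — after a change of basis sending the meridian/Seifert framing appropriately — are mutually adjacent, so the single basic slice from $t$ to $t+1$ followed by filling with meridian $t+1$ is the \emph{same} as: a basic slice from $t$ to $-\infty$, followed by filling with meridian... and this has to be matched against the uniformly twisting layer from $-\infty$ to $b$. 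Picking $b = t+1$ (reinterpreted via the appropriate Farey coordinate change) and invoking uniqueness of the tight contact structure on the filling solid torus (Theorem~\ref{solid torus}) and uniqueness of the basic slice up to the two signs (Remark~\ref{remark:basic slice}), the two contact manifolds agree provided the basic-slice signs match. To fix the sign: the positive transverse push-off corresponds to the half of the bypass with the correct relative Euler class — this is the same sign bookkeeping that makes Baldwin--Etnyre's $(-1)$-result work, run with opposite orientation.

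The main obstacle is precisely this last bookkeeping: making sure that the \emph{uniformly twisting} $T^2 \times I$ prescribed by inadmissible transverse surgery, together with its contact cut, really does reassemble to a \emph{tight} solid-torus filling of the sort appearing in the definition of contact $(+1)$-surgery, and that the relative Euler class (equivalently, the basic-slice sign, equivalently the stabilisation sign in a Ding--Geiges picture) is the one coming from a transverse, not a stabilised-Legendrian, push-off. I would handle tightness by noting that contact $(+1)$-surgery on a Legendrian knot is always tight on the surgery torus itself (it is a single basic slice glued to a unique tight solid torus), and handle the sign by computing the relative Euler class of the uniformly twisting layer against that of the basic slice — a short local computation in the model $(S^1 \times \R^2, \ker(\cos r\,dz + r \sin r\,d\theta))$, exactly the model both constructions are phrased in, so the two Euler-class computations take place in the same chart and can be compared term by term. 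Once the slope identification ($b = t+1$ in the right coordinates) and the sign identification are both in hand, uniqueness of tight contact structures on the pieces finishes the proof.
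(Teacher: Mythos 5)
Your overall frame -- work in the local model, identify both surgery tori as tight solid tori with the same convex boundary data and the same meridian, and invoke the uniqueness of Theorem~\ref{solid torus} -- is the same as the paper's, and your slope identification ($b = tb(L)+1$ in the Seifert framing, i.e.\ $+1$ in the contact framing) is correct. But there is a genuine gap at exactly the point you flag as ``the main obstacle.'' To apply Theorem~\ref{solid torus} you must know that the solid torus produced by the \emph{transverse} surgery (the completed complement of $K$ inside $N(L)$, union the uniformly twisting layer, followed by the contact cut) is tight with convex boundary of dividing slope $tb(L)$ and meridian $tb(L)+1$. Your proposed handling of tightness -- ``contact $(+1)$-surgery on a Legendrian knot is always tight on the surgery torus itself'' -- establishes tightness on the Legendrian side only, which is not in question; it says nothing about the transverse side, so the argument does not close. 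The fix the paper uses is a one-line observation you never make: after the contact cut, the core of the transverse surgery torus is a transverse knot and the whole glued-up torus is by construction a \emph{standard neighbourhood of this dual transverse knot}, hence tight; since the surgery leaves $\partial N(L)$ untouched, its boundary is still convex with dividing curves of slope $0$ in the contact framing and its meridian has slope $+1$, and Kanda's theorem then identifies it with the contact $(+1)$-surgery torus.

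Relatedly, the basic-slice sign and relative Euler class bookkeeping you build the argument around is a red herring here, and at one point contradicts your own (correct) earlier remark: contact $(+1)$-surgery involves \emph{no} basic slice -- $tb(L)$ and $tb(L)+1$ are Farey-adjacent, so one glues a single solid torus carrying the unique tight structure -- and likewise the dividing slope and meridian of the transverse surgery torus are adjacent, so once tightness is known there is no sign choice left for an Euler class computation to pin down. (Sign issues of the kind you describe are genuinely present for contact $(+r)$-surgery with larger numerator and denominator; that is the content of Theorem~\ref{Transverse=Legendrian}, which the paper proves later by open book methods, not of this proposition.) Computing relative Euler classes of the uniformly twisting layer cannot substitute for the missing tightness statement, since such computations do not by themselves rule out an overtwisted disc in the reassembled torus.
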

% !!!!!!!!!!!!!!!!!!!!!!!!!
% PPPPPPPPPP
\begin{proof}
Let $L$ be a Legendrian knot in $(M,\xi)$ and $N$ a standard neighbourhood of $L$.  We will work in the contact framing, where $\mu$ is a meridian and $\lambda$ is the contact framing.  A local model for $N$ is a tight solid torus $S$ obtained from $S_0$ by perturbing the boundary to be convex with dividing set consisting of two parallel curves of slope $0$.  We know that $N$ and $S$ are contactomorphic, so we only need consider $S$.

Note that $S$ is contactomorphic to $S \cup S'$, where $S'$ is an $I$-invariant collar neighbourhood of $\partial S$.  Contact $(+1)$-surgery on $L$ is performed by removing $S$ from $S \cup S'$ and gluing $S$ back in according to the map $\bigl(\begin{smallmatrix}1&1\\ 0&1\end{smallmatrix} \bigr)$.  The resulting manifold is a tight solid torus whose dividing curves have slope $0$ and whose meridian has slope $+1$, when measured with respect to the original contact framing.  By Theorem~\ref{solid torus}, there is a unique tight contact structure on such a solid torus up to isotopy.

Consider now a transverse push-off $K$ of $L$ in $S$.  It suffices to show that inadmissible transverse 1-surgery on $K$ results in a tight solid torus contactomorphic to the one produced by contact $(+1)$-surgery.  Note that $S$ is a standard neighbourhood for $K$.  Consider the result of inadmissible transverse $1$-surgery on $K$ in $S$.  The new solid torus is a standard neighbourhood of the dual knot, \textit{ie.\@} the core of the surgery torus, and so it is tight.  The boundary is still convex, and the dividing curves still have slope $0$ when measured with respect to the original contact framing.  The meridian has slope $+1$, and thus by Theorem~\ref{solid torus}, the tight contact structure on the surgery torus coming from inadmissible transverse $1$-surgery agrees with that coming from contact $(+1)$-surgery.
\end{proof}
% PPPPPPPPPP

% PPPPPPPPPP
\begin{proof}[Proof of Theorem~\ref{thm:transverse surgery gets everything}]
Ding and Geiges \cite{DG:surgery} showed that every contact 3-manifold can be obtained by contact $(\pm 1)$-surgery on a link in $S^3$.  Combing Baldwin and Etnyre's result \cite{BE:transverse} with Proposition~\ref{prop:+1 is transverse} gives us the result. \end{proof}
% PPPPPPPPPP

\begin{remark} In Section~\ref{sec:OBD}, we will prove Theorem~\ref{Transverse=Legendrian}, showing that inadmissible transverse surgery on a transverse knot agrees with contact $r$-surgery on a Legendrian approximation, for some positive $r$.  Our proof will be via open book decompositions, hence we defer it until Section~\ref{sec:OBD}.  However, one could prove it directly from the definition, considering the signs of basic slices used to create the surgery torus in either instance. \end{remark}

Before we leave this section, we give a lemma that will be used throughout this paper.  Compare \cite[Proposition~7]{DG:surgery}, where they prove the corresponding result for surgeries on Legendrian knots.

% !!!!!!!!!!!!!!!!!!!!!!!!!
\begin{lem}\label{inadmissible then admissible}
Any inadmissible transverse $r$-surgery on a transverse knot $K\subset \Mxi$ with $r > 0$ can be obtained by an inadmissible transverse $(1/m)$-surgery on $K$ followed by some admissible transverse surgery on the surgery dual knot to $K$, where $m$ is a positive integer such that $1/m < r$.
\end{lem}
% !!!!!!!!!!!!!!!!!!!!!!!!!
% PPPPPPPPPP
\begin{proof}
Say we wish to perform inadmissible transverse $r$-surgery including $n$ half-twists; the resulting contact manifold is $(M_r(K),\xi^n_r)$.  In performing inadmissible transverse $(1/m)$-surgery on $K$ with exactly $n$ half-twists, we first remove $K$ and take the closure, such that the characteristic foliation on the boundary has leaves of slope $-\infty$.  We then glue on a $T^2 \times I$, where the contact structure rotates from having having leaves of slope $-\infty$ through $n$ half-twists and then beyond from a torus $T_1$ with leaves of slope $-\infty$ to the new boundary $T_2$, which has leaves of slope $1/m$.  There exists a boundary-parallel torus $T_3$ with characteristic foliation of leaves of slope $r$ in between $T_1$ and $T_2$.

After performing a contact cut on the new boundary $T_2$, the torus $T_1$ bounds a standard neighbourhood of the dual knot $K'$ to $K$, contactomorphic to $S^0_a$, for some rational number $a$ (in fact, it is not hard to see that $a = -1/m$, if we choose the same longitude for $K'$ as for $K$).  In this contactomorphism, the image of $T_3$ is $T^0_b$ for some rational number $b < a$, and it bounds $S^0_b$.  Thus, in performing admissible transverse $b$-surgery on $K'$, we remove the interior of $S^0_b$, and perform a contact cut on the boundary.  However, removing the interior of $S^0_b$ leaves us with the same manifold as we would get by gluing a $T^2 \times I$ layer to the complement of $K$ in $M$ that stopped at $T_3$ instead of going out to $T_2$.  Thus, since the contact cuts are these manifolds are the same, originally performing inadmissible transverse $r$-surgery (with $n$ half-twists) on $K$ results in the same contact manifold as admissible transverse $b$-surgery on $K'$.
\end{proof}
% PPPPPPPPPP

%%%%%%%%%%%%%%%%%%%%%%%%%
%%%%%%%%%%%%%%%%%%%%%%%%%
\section{Open Book Decompositions}
%%%%%%%%%%%%%%%%%%%%%%%%%
%%%%%%%%%%%%%%%%%%%%%%%%%
\label{sec:OBD}

In this section, we discuss what happens when we perform transverse surgery on the binding component of an open book decomposition.  We create integral open books that support the contact manifolds resulting from any inadmissible transverse surgery, and from admissible surgeries where the surgery coefficient is smaller than the page slope.  At this point we are unable to create open books for admissible surgery larger than the page slope.

%%%%%%%%%%%%%%%%%%%%%%%%%
\subsection{Background}
%%%%%%%%%%%%%%%%%%%%%%%%%

We recall the definition of an open book, and what it means for an open book to support a contact structure.  See \cite{Etnyre:OBlectures} for more background.

A manifold $M$ has an \textit{(integral) open book} $\Sigphi$, for $\Sigma$ a surface with non-empty boundary, and $\phi \in \textrm{Diff}^+(\Sigma, \bd\Sigma)$, if $M$ is diffeomorphic to $$\frac{\Sigma \times [0,1]}{(x,1) \sim (\phi(x),0)} \bigcup \left(\bigcup \left(S^1 \times D^2\right)\right),$$ where there are as many $S^1 \times D^2$ factors as there are boundary components of $\Sigma$, and each solid torus is glued such that $S^1\times \{* \in \bd D^2\}$ gets mapped to a distinct boundary component of $\Sigma \times \{*\}$ and $\{* \in S^1\} \times \bd D^2$ gets mapped to $\{* \in \bd\Sigma\} \times S^1$.
Each $\Sigma$ admits an extension so that its boundary lies on the cores of the glued-in $S^1 \times D^2$; we also call this extension $\Sigma$.  We call each copy of $\Sigma$ a \textit{page}, and the image of $\bd\Sigma$ (\textit{ie.\@} the cores of the glued-in tori) is called the \textit{binding}.  We say that $\Sigphi$ \textit{supports} a contact structure $\Mxi$ if there exists a contact $1$-form $\alpha$ for $\xi'$ isotopic to $\xi$ such that:

\begin{itemize}
\item $d\alpha$ is a positive area form on each page,
\item $\alpha > 0$ on the binding.
\end{itemize}

There is a unique contact structure supported by a given open book \cite{TW,Giroux:OBD}.  We also have an operation called \textit{positive stabilisation}, which creates a new open book supporting the same manifold with the same contact structure.  This operation takes $\Sigphi$ to $(\Sigma', \phi')$, where $\Sigma'$ is $\Sigma$ plumbed with an annulus, and $\phi' = \phi \circ D_\gamma$, where $D_\gamma$ is a positive Dehn twist along the core of the annulus.  The core of the annulus intersected with the original page $\Sigma$ defines an arc, called the \textit{stabilisation arc}.  The stabilised open book supports the same manifold and the same contact structure as did $\Sigphi$.  With this, we can formulate:

% !!!!!!!!!!!!!!!!!!!!!!!!!
\begin{thm}[Giroux \cite{Giroux:OBD}] Let $M$ be a closed $3$-manifold.  Then there is a bijection between

$$\frac{\{\textrm{open books of }M\}}{\textrm{positive stabilisation}}\mbox{\hspace{.5cm} and \hspace{.5cm}}\frac{\{\textrm{contact structure on }M\}}{\textrm{contactomorphism}}.$$\end{thm}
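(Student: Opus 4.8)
The plan is to establish the bijection in three movements: (a) every open book $\Sigphi$ of $M$ determines a contact structure, well-defined up to isotopy, that it supports; (b) every contact structure on $M$ is supported by \emph{some} open book; and (c) any two open books supporting isotopic contact structures become isotopic after finitely many positive stabilisations. Combined with the fact already recorded above that positive stabilisation leaves the supported contact structure unchanged, parts (a)–(c) say exactly that the assignment sending a stabilisation-class of open books to the contactomorphism-class of the contact structure it supports is well-defined, surjective, and injective, which is the asserted bijection.

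For (a) I would run the Thurston--Winkelnkemper construction on the mapping torus of $(\Sigma,\phi)$: write down a $1$-form that is contact, has $d\alpha$ a positive area form on the pages, and is positive near the boundary, then extend it over the glued-in solid tori by an explicit local model in which the binding is transverse and the pages spiral into it. Uniqueness up to isotopy follows because the set of contact forms \emph{adapted} to a fixed open book (positive on the binding, $d\alpha$ positive on each page) is nonempty and convex, hence path-connected; Gray stability along such a path of contact forms produces the required ambient isotopy.

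For (b) I would pass through a \emph{contact cell decomposition} of $\Mxi$: a finite CW decomposition whose $1$-skeleton $K$ is a Legendrian graph, whose $2$-cells are embedded with Legendrian boundary and (after being made convex) carry a single boundary-parallel dividing arc, and whose $3$-cells are Darboux balls. Such a decomposition exists by a genericity argument — triangulate $M$, perturb the skeleton to be Legendrian, perturb the $2$-cells to be convex, and subdivide until all dividing sets are arcs. A regular neighbourhood $N(K)$ of the Legendrian $1$-skeleton is then a standard contact handlebody; the product disks dual to the $1$-cells, together with the single-dividing-arc condition on the $2$-cells, organise $N(K)$ into a page whose binding is the dividing set on $\partial N(K)$, and the monodromy is recovered from the (again standard) way the complementary handlebody is reglued. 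One checks directly that this open book supports $\xi$.

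Part (c) is where essentially all the difficulty lies. The strategy: first, show that any open book supporting $\xi$ is, after finitely many positive stabilisations, \emph{carried by} a contact cell decomposition as in (b) — stabilising enlarges the page until it is a regular neighbourhood of the Legendrian $1$-skeleton of an adapted decomposition, each stabilisation arc being a newly added $1$-cell. Second, show any two contact cell decompositions of $\Mxi$ admit a common refinement, again by convex-surface genericity. Third, show that subdividing a contact cell decomposition corresponds exactly to positively stabilising the associated open book. Chaining these: given open books $B_0, B_1$ supporting $\xi$, stabilise each until it is carried by decompositions $\Delta_0,\Delta_1$, pass to a common refinement $\Delta$, and observe that the open book carried by $\Delta$ is a common positive stabilisation. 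The main obstacle is rigorously controlling the convex-surface theory underneath the first two steps — the isotopies of convex surfaces, the behaviour of dividing sets under subdivision, and the Legendrian realisation principle used to arrange Legendrian skeleta and single dividing arcs — which is precisely the portion of the Giroux correspondence whose complete verification has demanded substantial further work.
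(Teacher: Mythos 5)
This statement is quoted in the paper as background, attributed to Giroux, and the paper offers no proof of it at all; so there is nothing internal to compare your argument against, only the strategy of the cited literature. Your outline is exactly that standard program: (a) is the Thurston--Winkelnkemper construction plus the convexity-of-adapted-forms/Gray-stability argument for uniqueness of the supported contact structure, (b) is Giroux's passage through contact cell decompositions, and (c) is the scheme of stabilising both open books until they are carried by contact cell decompositions, taking a common refinement, and matching subdivision with positive stabilisation. As a description of how the correspondence is proved, this is accurate and is the route the cited work takes.

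However, as a proof it is not complete, and you say so yourself: step (c) is a plan, not an argument. The claims that any open book supporting $\xi$ becomes, after finitely many positive stabilisations, one carried by a contact cell decomposition, that any two contact cell decompositions of $(M,\xi)$ admit a common refinement adapted to $\xi$, and that subdivision corresponds precisely to positive stabilisation are each substantial convex-surface-theoretic statements whose verification constitutes the bulk of the Giroux correspondence (and whose full written proof appeared only long after Giroux's announcement). So your proposal should be read as a faithful summary of the known strategy rather than a self-contained proof of the theorem. One small further point: the bijection is usually stated with contact structures up to isotopy; if one quotients by contactomorphism as in the statement here, the open-book side should likewise be taken up to diffeomorphism together with positive stabilisation, and your argument, which works with isotopy classes throughout, silently makes that adjustment.
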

% !!!!!!!!!!!!!!!!!!!!!!!!!

Baker, Etnyre, and van Horn-Morris \cite{BEVHM} have also formulated a notion of a \textit{rational open book}, where the gluing of the solid tori results in the pages approaching the binding in a curve that may not be a longitude.  In general, the binding may be a rationally null-homologous link in $M$.  They proved that a rational open book, just like its integral cousin, supports a unique contact structure up to isotopy.

%%%%%%%%%%%%%%%%%%%%%%%%%
\subsection{Surgery on Binding Components of Open Books}
%%%%%%%%%%%%%%%%%%%%%%%%%

Given an open book $\Sigphi$, and a knot $K$ in the binding, we define the \textit{page slope} to be the curve traced out by the boundary of the page on $\bd N$, where $N$ is a neighbourhood of $K$.  When $\Sigphi$ is an integral open book, this gives rise to a coordinate system on $\bd N$, or equivalently a framing of $K$, where the longitude $\lambda$ is chosen to be the page slope.  We orient $K$ as the boundary of $\Sigma$, and we let $\mu \subset \bd N$ be an oriented meridian of $K$ linking it positively.  This gives us coordinates $(\lambda, \mu)$ on $\bd N$.  The curve $p\mu + q\lambda$ corresponds to the slope $p/q$, so the page slope is slope 0. Any negative slope with respect to this framing is an admissible surgery slope (\textit{cf.\@} \cite[Lemma 5.3]{BE:transverse}).  If $\Sigphi$ is an integral open book (or at least, if it is integral at the binding component $K$), then doing Dehn surgery on $K$ with a negative surgery coefficient (with respect to the page slope) induces an rational open book on the resulting manifold.  We have the following.

% !!!!!!!!!!!!!!!!!!!!!!!!!
\begin{thm}[Baker--Etnyre--van Horn-Morris \cite{BEVHM}]\label{BEV OBD Thm} The induced rational open book from Dehn surgery on $K$ less than the page slope supports the contact structure coming from admissible surgery on the transverse knot $K$. \end{thm}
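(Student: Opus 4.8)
The plan is to localize the comparison to a neighborhood of $K$ and reduce to the uniqueness statements already available: the uniqueness of the tight contact structure on a solid torus (Theorem~\ref{solid torus}) and the uniqueness of the contact structure supported by a rational open book (Baker--Etnyre--van Horn-Morris). Write $N$ for a tubular neighborhood of $K$ in $M$ and $N'$ for the solid torus glued back in to form $M_r(K)$. Both the Dehn surgery (together with its induced rational open book) and the admissible transverse surgery are supported inside $N$: the exterior $M_r(K)\setminus N'$ is canonically identified with $M\setminus N$, the contact structure there is unchanged, and the restriction of $\Sigphi$ already supplies a Giroux-compatible $1$-form on the exterior. Hence it suffices to understand the contact structure on $N'$ produced by each construction, and to check they agree, given that they induce the same germ along $\bd N' = \bd N$.

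On the admissible-surgery side, by definition one fixes a standard transverse neighborhood of $K$ — take the one supplied by the open book, whose convex boundary carries two dividing curves of page slope — removes a smaller invariant torus $T^m_b$ with $b = r < 0$, and performs a contact cut. The resulting $N'$ is, by construction, a standard neighborhood of its core, hence a tight solid torus; its convex boundary has dividing set that of $\bd N$, and its meridional slope is the surgery coefficient $r$. On the rational-open-book side, near any binding component the contact structure compatible with a rational open book has a rotationally symmetric local model in which the binding is transverse and the pages spiral into the core; a direct computation with a compatible $1$-form shows that a convex boundary torus of this model has two dividing curves of slope equal to the page slope of the induced rational open book, that its meridional slope is again $r$, and that the solid torus so described is tight (for instance because it embeds in the tight transverse neighborhood of the original binding before surgery).

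Finally I would compare: in each construction $N'$ is a tight solid torus with convex boundary whose dividing set is that of the fixed convex torus $\bd N' = \bd N$ and whose meridional slope is $r$, so by Theorem~\ref{solid torus} the two are isotopic rel boundary. Since the two contact structures also agree on $M\setminus N$ and the boundary identifications match, they agree on all of $M_r(K)$; as the induced rational open book supports a unique contact structure, this gives the theorem. I expect the main obstacle to be the rational-open-book side: making precise the local model near a binding component of a rational open book, extracting the dividing-curve slope on a convex boundary torus, and — crucially — verifying tightness of that solid torus so that Theorem~\ref{solid torus} applies. This is exactly where the hypothesis that the surgery coefficient is less than the page slope is used, since for coefficients larger than the page slope the relevant solid torus need not be tight and one must instead compare with inadmissible transverse surgery, as in Theorem~\ref{OBDThm}.
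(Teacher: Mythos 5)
First, note that this statement is not proved in the paper at all: it is quoted from Baker--Etnyre--van Horn-Morris \cite{BEVHM} as background, so there is no internal proof to compare against; the relevant comparison is with their argument (and with the analogous argument this paper does give in the inadmissible case, Theorem~\ref{thm:OBD connected binding}), which proceeds by writing down an explicit contact form in a model neighbourhood of the binding and verifying the Giroux compatibility conditions ($d\alpha>0$ on pages, $\alpha>0$ on the binding) directly --- not by a uniqueness-of-tight-structures argument on the surgery solid torus.

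The genuine gap in your proposal is the final comparison step, where you invoke Theorem~\ref{solid torus} (Kanda) to conclude that the two contact structures on the glued-in solid torus $N'$ agree. Kanda's uniqueness holds only when the two dividing curves on $\bd N'$ are isotopic to $\lambda+n\mu$ with $\lambda$ the core, i.e.\ when each dividing curve meets the meridian of $N'$ exactly once. Here the dividing slope on $\bd N'$ is (close to) the page slope, while the new meridian has slope $r=p/q$ measured in the old coordinates, so the geometric intersection number is $|p|$ in general; for $|p|>1$ Honda's classification gives several tight contact structures on $N'$ with exactly this boundary data (the count $|(a_1+2)(a_2+1)\cdots a_n|$ type formula), and ``both tight with the same convex boundary and meridian'' does not pin down an isotopy class. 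Even restricting to universally tight or minimally twisting structures there are in general two, distinguished by signs/relative Euler class, so the whole content of the theorem is precisely to identify \emph{which} structure near the new core the rational open book supports, which is why the proof in \cite{BEVHM} (and the proof of Theorem~\ref{thm:OBD connected binding} here) works with explicit rotationally symmetric forms $h(r)\,dz+k(r)\,d\theta$ interpolating between the contact-cut model and the open book model, rather than with convex-surface uniqueness. Your argument as written would only go through for the special coefficients (e.g.\ $r=-1/n$ relative to the dividing slope) where the meridian meets the dividing curves once; also, as a smaller point, for a connected binding the standard neighbourhood supplied by \cite[Lemma 5.3]{BE:transverse} need not have boundary slope equal to the page slope, only slopes below it.
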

% !!!!!!!!!!!!!!!!!!!!!!!!!

In this section, we prove Theorem~\ref{OBDThm}, which is an extension of Theorem~\ref{BEV OBD Thm} to positive surgeries.  We begin with recalling the following proposition, see \cite[Theorem 5.7]{Etnyre:OBlectures}.

% !!!!!!!!!!!!!!!!!!!!!!!!!
\begin{prop}\label{prop:plusminus one surgery} Let $K$ be a knot on the page of an open book $\Sigphi$ for $\Mxi$.  The open book $(\Sigma, \phi \circ D_K^{\pm 1})$ is an open book for the manifold given by $\mp 1$-surgery on $K$, with respect to the framing given by $\Sigma$, where $D_K$ is a positive Dehn twist about a curve on $\Sigma$ isotopic to $K$.  Moreover, if $K$ is Legendrian, then this open book supports the contact structure coming from contact $(\mp 1)$-surgery on $K$.\end{prop}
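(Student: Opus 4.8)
\emph{Proof plan.} The statement divides into a topological claim and a contact refinement, which I would establish in that order, the refinement by the same reasoning already used for Proposition~\ref{prop:+1 is transverse}.

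\emph{Topological claim.} Realize $K$ on the page $\Sigma\times\{\tfrac12\}$, let $A\subset\Sigma$ be an annular neighbourhood of $K$ in the interior of $\Sigma$, and put $V:=A\times[\tfrac12-\varepsilon,\tfrac12+\varepsilon]$, a solid-torus neighbourhood of $K$ in $M$ with core $K$ whose longitude $(\partial A)\times\{\tfrac12\}$ realizes the page framing. The closed manifolds built from $\Sigma\times[0,1]$ by identifying $\Sigma\times\{1\}$ with $\Sigma\times\{0\}$ via $\phi$ and via $\phi\circ D_K^{\pm1}$ coincide outside $V$, since $D_K^{\pm1}$ is supported in $A$; they differ by excising $V$ and regluing it by a map shearing a meridian disc through one full turn along the core. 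Matching the sign of the Dehn twist against the sign of the surgery slope, this is Dehn surgery on $K$ with coefficient $\mp1$ in the page framing, which is the first assertion.

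\emph{Contact refinement.} Suppose now $K$ is Legendrian. Since $K$ lies on a page of an open book supporting $\xi$, its page framing equals the contact framing $tb(K)$; hence $(\Sigma,\phi\circ D_K^{\pm1})$ is topologically $M_{tb(K)\mp1}(K)$, the underlying manifold of contact $(\mp1)$-surgery, and only the contact structure remains to be identified. In the new open book $(\Sigma,\phi\circ D_K^{\pm1})$ the curve $K$ still sits on the page $\Sigma\times\{\tfrac12\}$ and is the core of the reglued solid torus $V'$. As a neighbourhood of a curve on a page of an adapted open book, $V'$ is a standard tight solid torus; its boundary can be made convex with two dividing curves of slope $0$ measured in the original contact framing — the page framing is insensitive to the monodromy — while its meridian acquires slope $\mp1$ in that framing. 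By Theorem~\ref{solid torus} there is a unique tight contact structure on such a solid torus. On the other hand, contact $(\mp1)$-surgery on $K$ corresponds to the single Farey jump $tb(K)\to tb(K)\mp1$, so it involves no basic slices and glues in exactly that unique tight solid torus; moreover the complement of $V$, with its contact structure and gluing data, is the same in both constructions. Hence the two supported contact structures agree, precisely as in the proof of Proposition~\ref{prop:+1 is transverse}. (For the $D_K^{+1}$ case one can alternatively invoke the description of Legendrian surgery as a Weinstein $2$-handle attachment, realized on open books by a positive Dehn twist along the attaching curve.)

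\emph{Main difficulty.} The cut-and-reglue is routine once a coordinate system on $V$ is fixed; the points requiring care are the orientation bookkeeping that yields coefficient $\mp1$ rather than $\pm1$, and, in the refinement, checking that $V'$ is genuinely a standard tight neighbourhood with dividing slope equal to the page framing and that the gluing of $V'$ to the complement of $V$ really is the one prescribed by contact $(\mp1)$-surgery, not merely that the two glued pieces are abstractly isomorphic. That last verification is where the classification of tight structures on $S^1\times D^2$ (and, if one is careful about the collar, on $T^2\times I$) is genuinely used.
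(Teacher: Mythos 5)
The paper does not actually prove this proposition: it is quoted from \cite[Theorem 5.7]{Etnyre:OBlectures}, so the only comparison available is with the standard argument there. Your topological half is fine and is the usual cut-and-reglue along a fibered neighbourhood of $K$. The contact half, however, has a genuine gap at the sentence ``the complement of $V$, with its contact structure and gluing data, is the same in both constructions.'' That statement is essentially the whole content of the proposition, and it does not come for free. In the surgery construction the complement carries $\xi$ by definition; but the contact structure supported by $(\Sigma,\phi\circ D_K^{\pm1})$ is defined abstractly (Giroux/Thurston--Winkelnkemper), and nothing yet identifies its restriction to $M'\setminus V'$ with $\xi|_{M\setminus V}$. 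Even if you run the Thurston--Winkelnkemper construction with the same $1$-form $\beta$ for both monodromies, the resulting contact forms are only guaranteed to agree where the monodromies agree, i.e.\ outside $A\times S^1$, which is a $T^2\times[0,1]$ region strictly larger than the solid torus $V$; so the complements are not ``the same'' without further work. This is also why the analogy with Proposition~\ref{prop:+1 is transverse} breaks down: there both operations are defined by removing and regluing a solid torus inside one fixed contact manifold, so the complements agree tautologically and Kanda's Theorem~\ref{solid torus} finishes the job; here Kanda's theorem only compares the two fillings of the solid torus after the complements and the gluings have been matched, which is the step you have not supplied.

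What actually closes this gap in the literature is an explicit model: one writes down a contact form for the surgered contact structure in a neighbourhood of $K$ that remains adapted to the new open book (exactly in the spirit of the paper's proof of Theorem~\ref{thm:OBD connected binding}), or, for the $D_K^{+1}$ case, one uses the Weinstein $2$-handle description of Legendrian surgery together with Gay's/Etnyre's analysis of how the handle interacts with the pages. Note that your parenthetical appeal to ``Legendrian surgery is realized on open books by a positive Dehn twist along the attaching curve'' is not an alternative argument but is literally the $(-1)$-surgery case of the proposition being proved, so as written it is circular unless you cite it as an external result. To repair your proof you would either carry out such a local adapted-form construction, or invoke a relative uniqueness statement for contact structures compatible with the fixed fibration of the complement of $V$ with controlled boundary behaviour; asserting the identification and then applying Theorem~\ref{solid torus} is not enough.
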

% !!!!!!!!!!!!!!!!!!!!!!!!!

To prove Theorem~\ref{OBDThm}, we split the process into the following steps.  First, we prove that for $n > 0$ an integer, the open book coming from $1/n$ surgery on the boundary component corresponding to $K$ supports the result of inadmissible transverse $1/n$-surgery on $K$.  Then we note that all other inadmissible slopes greater than $0$ can be achieved by a combination of an inadmissible transverse $1/n$-surgery followed by an admissible surgery, and invoking Theorem~\ref{BEV OBD Thm} completes the proof.  We will first give a proof of the first step when $\Sigma$ has multiple boundary components (Theorem~\ref{thm:OBD multiple boundaries}) and then a proof for an arbitrary number of boundary components (Theorem~\ref{thm:OBD connected binding}).

% !!!!!!!!!!!!!!!!!!!!!!!!!
\begin{thm}\label{thm:OBD multiple boundaries} If $\Sigphi$ is an open book, $|\bd \Sigma| \geq 2$, and $n > 0$ is an integer, then the open book induced by topological $1/n$-surgery on a binding component $K$ (where the surgery coefficient is with respect to the page slope) supports the contact structure coming from inadmissible transverse $1/n$-surgery on $K$.  In particular, the new open book is $(\Sigma, \phi\circ D_K^{-n})$. \end{thm}
% !!!!!!!!!!!!!!!!!!!!!!!!!
% PPPPPPPPPP
\begin{proof} Recall that Baldwin and Etnyre \cite[Lemma 5.3]{BE:transverse} showed that if $|\bd \Sigma| \geq 2$, then we can find a standard neighbourhood of any binding component contactomorphic to $S^1_{\varepsilon}$, for some $\varepsilon > 0$ (with respect to the page slope).  Thus we know that there is a neighbourhood of $K$, whose boundary is a pre-Lagrangian torus with slope 0, where each leaf of the foliation on the boundary is a Legendrian sitting on a page of the open book.  We know from Proposition~\ref{prop:plusminus one surgery} that contact $(+1)$-surgery on a knot on the page of an open book, which is the same as $(+1)$-surgery with respect to the page slope, is equivalent to changing the monodromy of the open book by a negative Dehn twist.  We know from Proposition~\ref{prop:+1 is transverse} and Ding and Geiges's description \cite{DG:surgery} of contact surgery that doing contact $(+1)$-surgery on $n$ leaves of slope $0$ is equivalent to doing inadmissible transverse $1/n$-surgery on $K$.  The result follows. \end{proof}
% PPPPPPPPPP

% PPPPPPPPPP
\begin{proof}[Proof of Theorem~\ref{OBDThm} for $|\bd\Sigma| \geq 2$] Given an inadmissible slope $p/q > 0$, choose a positive integer $n$ such that $1/n < p/q$.  By Lemma~\ref{inadmissible then admissible}, we know that inadmissible transverse $p/q$-surgery is equivalent to doing inadmissible transverse $1/n$-surgery, followed by some admissible transverse surgery.  Combining Theorem~\ref{thm:OBD multiple boundaries} and Theorem~\ref{BEV OBD Thm} gives the result.\end{proof}
% PPPPPPPPPP

In order to extend this result to the case of a single boundary component, we construct an explicit contact form that is supported by the induced open book and whose kernel is isotopic to the contact structure of inadmissible transverse surgery.

% !!!!!!!!!!!!!!!!!!!!!!!!!
\begin{thm}\label{thm:OBD connected binding} If $\Sigphi$ is an open book (with any number of binding components), then the open book induced by topological $1/n$-surgery on a binding component $K$ (where the surgery coefficient is with respect to the page slope) supports the contact structure coming from inadmissible transverse $1/n$-surgery on $K$.  In particular, the new open book is $(\Sigma, \phi\circ D_K^{-n})$, where $D_K$ is a positive Dehn twist about a curve parallel to the binding component $K$. \end{thm}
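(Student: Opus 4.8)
The plan is to prove Theorem~\ref{thm:OBD connected binding} by exhibiting an explicit contact form on the surgered manifold that is both supported by the induced open book $(\Sigma,\phi\circ D_K^{-n})$ and has kernel isotopic to the contact structure produced by inadmissible transverse $1/n$-surgery on $K$. The point is that when $|\bd\Sigma|=1$ we cannot appeal to the Baldwin--Etnyre neighbourhood $T^1_\varepsilon$ of slope $0$ consisting of page-Legendrians (Lemma 5.3 of \cite{BE:transverse} requires $|\bd\Sigma|\geq 2$), so we must work locally near the binding and match things up by hand.

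First I would set up coordinates near the binding component $K$. Using the standard model for a neighbourhood of the transverse binding, write a contact $1$-form $\alpha$ adapted to the open book as in the definition of ``supports'': $d\alpha$ is a positive area form on each page and $\alpha>0$ on $K$. Near $K$ this looks, in coordinates $(r,\theta,z)$ on a solid torus with $z$ the $S^1$-direction along $K$, like $\alpha = f(r)\,dz + g(r)\,d\theta$ with $(f,g)$ tracing the appropriate path (the page slope being the $0$ slope in the chosen framing). Second, I would describe inadmissible transverse $1/n$-surgery in these coordinates: remove $K$, extend $\xi$ so the characteristic foliation on the boundary torus has slope $-\infty$ (the inverse of a contact cut), glue on a uniformly twisting $T^2\times I$ whose contact planes rotate from slope $-\infty$ out to slope $1/n$ (with no extra half-twists, since $n=0$ in the notation of the definition), and perform a contact cut. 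Explicitly, this amounts to replacing $(f(r),g(r))$ on the collar by a new path $(\tilde f(r),\tilde g(r))$ that begins tangent to the old data near the outer boundary and ends so that the meridian of the new solid torus is the $1/n$ curve, i.e.\ exactly realizing $1/n$-surgery with respect to the page framing.

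Third — and this is the crux — I would verify that the form obtained by splicing the new local model into the old $\alpha$ outside the neighbourhood is still an open book form for $(\Sigma,\phi\circ D_K^{-n})$: I need $d\alpha$ to remain a positive area form on each page of the \emph{new} open book, and $\alpha>0$ on the new binding (the core of the surgery torus). The topological identification of the surgered open book as $(\Sigma,\phi\circ D_K^{-n})$ follows from Proposition~\ref{prop:plusminus one surgery} applied $n$ times to a curve parallel to $K$ on the page (each $(+1)$-surgery on such a curve, with respect to the page framing, composing a negative Dehn twist $D_K^{-1}$ into the monodromy), combined with the fact that $n$ successive $(+1)$-surgeries on parallel copies realize $1/n$-surgery. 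The contact-geometric content is then that this composite of $n$ contact $(+1)$-surgeries on page-parallel Legendrians matches the twisting prescribed by inadmissible transverse $1/n$-surgery; by Proposition~\ref{prop:+1 is transverse} and Ding--Geiges \cite{DG:surgery} this is true at the level of the gluing data, and the explicit form makes the match visible. I expect the main obstacle to be the bookkeeping that checks $d\alpha>0$ on every page after the splice — one must choose the interpolation of $(\tilde f,\tilde g)$ carefully so that the page area form stays positive all the way through the uniformly-twisting collar, which is precisely where the single-boundary case differs from the multiple-boundary case and where Lemma 5.3 of \cite{BE:transverse} is no longer available as a shortcut. Once that positivity is arranged, uniqueness of the contact structure supported by an open book (integral in this case, since the new binding still meets each page in a longitude) finishes the proof.
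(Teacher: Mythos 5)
Your proposal takes essentially the same route as the paper: the paper's proof likewise works purely in a local model near the binding, writing $\alpha = f(r)\,dz + g(r)\,d\theta$, building a model form $\beta = h(r)\,dz + k(r)\,d\theta$ in the surgered coordinates (pulled back by the coordinate change $(z,r,\theta)\mapsto(z-\tfrac{n}{2\pi}\theta,r,\theta)$ so it splices with $\alpha$ for $r\geq r_1$ and realises the $1/n$-twisting of the inadmissible surgery), and then verifies the support conditions via $\alpha>0$ on the new binding and $d\alpha|_{\mathrm{page}} = -h'(r)\,dz\wedge dr > 0$ from the condition $h'(r)<0$ --- exactly the positivity bookkeeping you single out as the crux. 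So the proposal is correct and matches the paper's argument.
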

% !!!!!!!!!!!!!!!!!!!!!!!!!
% PPPPPPPPPP
\begin{proof}
We need to show that the contact structure $\xi_{1/n} = \ker \alpha_{1/n}$ coming from inadmissible transverse $1/n$-surgery on $K$ is supported by the open book coming from $1/n$-surgery on $K$.  Notice that outside a neighbourhood of $K$, $\xi_{1/n}$ agrees with $\xi$, the contact structure supported by $\Sigphi$, and so we only need to look at a neighbourhood of $K$.  Let the page slope be the curve $\lambda$.  Then after surgery, we have a coordinate system $(\lambda',\mu')$, where $\lambda' = \lambda$ and $\mu' = n\lambda + \mu$.

There is a regular neighbourhood of $K$ such that the contact form on $S^1 \times D^2$ is given by $f(r)\,dz + g(r)\,d\theta$, where $z \sim z+1$ is the coordinate on $S^1$, $(f,g)$ is $(1,r^2)$ near $r = 0$, and as $r$ increases, $f$ tends to $0$ (in particular, we can use $S^0_0$ from Definition~\ref{def:transverse surgery}).  The change of coordinates coming from surgery gives a map $\Phi : S^1 \times D^2 \to S^1 \times D^2$, which sends $(z, r, \theta)$ to $(z - \frac{n}{2\pi}\theta,r, \theta)$.  Think of this as sending $\mu$ to $\mu' - n\lambda'$.  Thus, we require a contact form $\beta = h(r)\,dz + k(r)\,d\theta$ on $S^1 \times D^2$ (representing a neighbourhood after surgery) for which
\begin{itemize}
\item $h(r)k'(r) - k(r)h'(r) > 0$,
\item $(h,k) = (1, r^2)$ near $r =0$,
\item $(h,k) = (f, g-\frac{n}{2\pi}f)$ for $r \geq r_1$ for some $r_1> 0$,
\item $h'(r) < 0$ for $r>0$.
\end{itemize}
The first condition is the contact condition, and the third condition is to ensure that the contact planes match the original ones for $r \geq r_1$.  These conditions are possible to meet.  Note that the third condition is possible to meet because as $r$ increases, $f$ approaches $0$, so the condition is a small perturbation of the achievable condition $(f,g)$.  This allows us to define a contact form $\alpha' = \Phi^*(\beta) = h(r)\,dz + (k(r) - \frac{n}{2\pi}h(r))\,d\theta$ on $S^1 \times D^2$ which patches in with $\alpha$ (the contact form defining $\xi$), and $\ker\alpha' = \xi_{1/n}$.

It remains to check that this contact form is supported by the open book in $S^1 \times D^2$.  We see that $\alpha'$ restricted to the binding is positive, as in the new coordinate system, $\beta$ near the binding is $dz + r^2\,d\theta$.  Notice that on a page, $\theta$ is constant, so $d\theta = 0$, and so $d\alpha'|_\mathrm{page} = h'(r)\,dr\wedge dz = -h'(r)\,dz \wedge dr > 0$.  Thus the open book supports the contact structure $\xi_{1/n}$.
\end{proof}
% PPPPPPPPPP

% PPPPPPPPPP
\begin{proof}[Proof of Theorem~\ref{OBDThm}] Given Theorem~\ref{thm:OBD connected binding}, the rest of the proof is identical to the case of multiple boundary components.\end{proof}
% PPPPPPPPPP

%%%%%%%%%%%%%%%%%%%%%%%%%
\subsection{Constructions of Open Books}
%%%%%%%%%%%%%%%%%%%%%%%%%
\label{sec:construction}

We use the Farey graph to track operations on open books is as follows.  The three operations that we perform on a binding component of an open book (corresponding to a knot $K$) are the following:
\begin{enumerate}
\item positively stabilise with a boundary parallel stabilisation arc,
\item do $(+1)$-surgery (with respect to the page slope), or
\item do $(-1)$-surgery (with respect to the page slope).
\end{enumerate}

We will always treat the West point of the Farey graph as the meridian slope, and the East point as the page slope.  Whenever one of these quantities changes (the page slope as a result of stabilisation, and the meridian slope as a result of surgery), we will adjust the labeling of the Farey graph accordingly.  Thus the labels on the Farey graph will always correspond to slopes measured with respect to the original page slope and meridian slope.  The results of the three operations are shown in Figure~\ref{fig:farey operations}.

The result of (1), positive stabilisation, is to keep the meridian fixed, but lower the page slope by 1.  This has the effect on the labeling of the Farey graph of keeping West's label fixed, moving North's label to East, East's label to South, and South's label to South-West (along with the resulting change of all other labels).  When we stabilise, we add a new boundary component together with a positive Dehn twist parallel to the new boundary (this boundary component corresponds to a new unknot component in the binding which links $K$).  When doing further operations on the binding, we ignore this new boundary component and focus on the boundary component which did not acquire a positive Dehn twist.  This boundary component is isotopic to the original knot we were considering.  The result of (2), $(+1)$-surgery on the binding, is to keep the page slope fixed, but change the meridian slope.  This has the effect on the labeling of the Farey graph of keeping East's label fixed, moving South's label to West, West's label to North, and North's label to North-East.  Finally, the result of (3), $(-1)$-surgery on the binding, is similar to $(+1)$-surgery, where East's label is fixed, but now North's label moves to West, West's label moves to South, and South's label to South-East.  For more details on the effect of stabilisation and surgery on the open book, see \cite{Etnyre:OBlectures}.

\begin{figure}[htbp]
\begin{center}
\vspace{0.2cm}
\begin{overpic}[scale=0.8,tics=20]{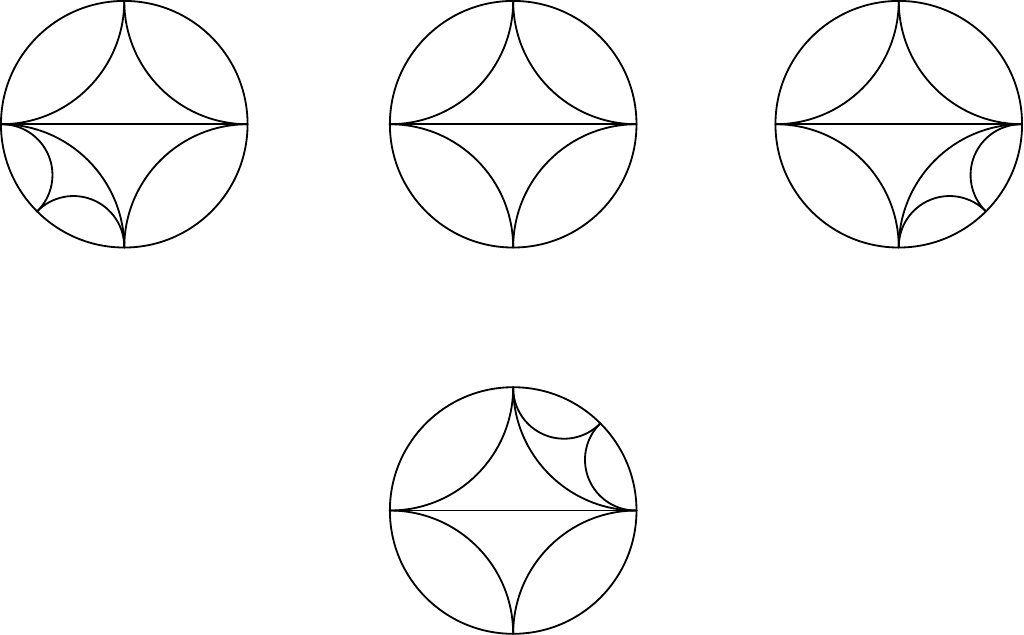}
\put(136,197){\line(-1,0){23}}
\put(112,194.5){$<$}
\put(140,195){$\infty$}
\put(97,195){$-1$}
\put(120,203){(1)}
\put(248,193){$0$}
\put(256,196){\line(1,0){23}}
\put(273,193.5){$>$}
\put(261,202){(3)}
\put(283,193){$-1$}
\put(197,136){\line(0,-1){27}}
\put(194.48,115){\rotatebox{-90}{$>$}}
\put(200,120){(2)}
\put(195,140){$1$}
\put(193,98){$\infty$}
\put(-10,195){$\infty$}
\put(399,193.5){$0$}
\put(190,247){$-1$}
\put(40,247){$-2$}
\put(334,248){$-1/2$}
\put(44,140){$0$}
\put(342,140){$\infty$}
\put(10,155){$1$}
\put(380,155){$1$}
\put(142,45){$1$}
\put(248,45){$0$}
\put(233,82){$-1$}
\put(192,-10){$1/2$}
\end{overpic}
\vspace{0.2cm}
\caption{The results of operations (1), (2), and (3) on the labels of the Farey graph, where we start with the standard labeling.  The new locations of the original labels for the four cardinal directions are shown, as well as the new labels for those four spots. The rest of the labels for the Farey graph can be inferred from the given labels, as in Section~\ref{sec:farey}.}
\label{fig:farey operations}
\end{center}
\end{figure}

\begin{example} Starting from the standard labeling, if we do a positive stabilisation followed by a $(-1)$-surgery with respect to the new page slope, East's label will read $(-1)$, West's label will read $-2$, North's label will read $-3/2$, and South's label will read $\infty$.  Thus if we modify our open book in this way, we will have performed a $-2$ surgery (with respect to the original page slope) on the binding component, and the new page slope is $(-1)$, measured with respect to the original page slope. \end{example}

\subsubsection{Open books compatible with admissible transverse surgery}

Although there are natural rational open books for transverse surgeries on a binding component, as in Theorem~\ref{OBDThm}, we would prefer to have an integral open book that supports the same contact structure as the rational open book.
We start by constructing an open book compatible with admissible transverse $r$-surgery on a binding component $K$, where $r < 0$ is with respect to the page slope.  We write $r$ as a negative continued fraction.

We first prove some easy lemmata about the Farey graph.

% !!!!!!!!!!!!!!!!!!!!!!!!!
\begin{lem}\label{Farey sum} The rational number $[a_1+1, a_2, \ldots, a_n]$ for $a_i \leq -2$ and $n>1$ is given by $$[a_1+1,a_2, \ldots, a_n] = |a_n+1|\cdot[a_1+1,a_2, \ldots, a_{n-1}]\oplus [a_1+1,a_2,\ldots, a_{n-1}+1].$$ \end{lem}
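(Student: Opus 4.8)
The plan is to prove this by induction on $n$, exploiting the recursive structure of negative continued fractions together with the behavior of the Farey sum $\oplus$ under the "next level" map $r \mapsto 1/(a+1-r)$. First I would recall that $[a_1+1, a_2, \ldots, a_n]$ is built from $[a_2+1, a_3, \ldots, a_n]$ (call this $r'$) via the formula $[a_1+1, \ldots, a_n] = a_1 + 1 - 1/r''$ where $r'' = [a_2, a_3, \ldots, a_n]$, and note the relation $[a_2, \ldots, a_n] = [a_2+1, \ldots, a_n] - 1$ so I can pass between the two normalizations. The base case $n = 2$ is a direct computation: $[a_1+1, a_2] = a_1 + 1 - 1/a_2 = \frac{a_2(a_1+1) - 1}{a_2}$, and one checks by hand that $|a_2+1|\cdot [a_1+1] \oplus [a_1+1+1] = |a_2+1|\cdot\frac{a_1+1}{1} \oplus \frac{a_1+2}{1}$, which under the vector correspondence $\frac{p}{q}\leftrightarrow \vects{q}{p}$ becomes $(a_2+1)\vects{1}{a_1+1} + \vects{1}{a_1+2}$ up to sign; since $a_2 \leq -2$ we have $a_2 + 1 \leq -1$, and the arithmetic collapses to $\vects{-a_2}{-a_2(a_1+1)+1}$, i.e.\ the fraction $\frac{a_2(a_1+1)-1}{a_2}$, matching.

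For the inductive step, I would assume the identity for continued fractions of length $n-1$ and prove it for length $n$. The cleanest route is to work at the level of the vectors in $\Z^2$: writing $v_k$ for the vector corresponding to $[a_1+1, \ldots, a_k]$ and $w_k$ for the vector corresponding to $[a_1+1, \ldots, a_{k-1}, a_k + 1]$, the claim is that $v_n = |a_n+1|\, v_{n-1} + w_{n-1}$ (noting the sign conventions so that $\oplus$ really is vector addition on these representatives). The key observation is that appending one more term to the continued fraction acts on the pair $(v_{n-1}, w_{n-1})$ by a fixed unimodular matrix depending only on $a_n$, and likewise on $(v_{n-2}, w_{n-2})$, and the Remark on negative continued fractions — that $[a_1+1, \ldots, a_{n-1}, -1] = [a_1+1, \ldots, a_{n-1}+1]$ — is exactly what identifies $w_{n-1}$ as a continued fraction of the correct shorter type. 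I would then feed the inductive hypothesis for $[a_1+1, \ldots, a_{n-1}]$ into this matrix recursion and simplify; the fact that each $a_i \leq -2$ guarantees all the intermediate fractions stay on the correct side (negative, reduced) so that $\oplus$ has the stated interpretation "in between" on the Farey tessellation, per the Remark following the definition of the Farey sum.

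I expect the main obstacle to be bookkeeping the signs and the exact vector representatives: the Farey sum $\oplus$ is literally addition of the standard vector lifts only when one is careful that negative fractions get a negative denominator, and the factor $|a_n+1|$ (rather than $a_n + 1$, which is negative) means one must track an orientation reversal at each level. Getting the base case normalization consistent with the recursive step — in particular making sure that the "$+1$" shift built into the $a_1 + 1$ convention propagates correctly and that the identity $[a_1+1,\ldots,a_{n-1},-1]=[a_1+1,\ldots,a_{n-1}+1]$ is invoked at the right moment — is where the argument could go wrong, so I would state the vector recursion explicitly as a sublemma before running the induction. Everything else is routine linear algebra in $\Z^2$, using that consecutive Farey labels correspond to a basis (the Remark on vectors).
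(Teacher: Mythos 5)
Your plan is correct in outline, and your base case is exactly the paper's computation; the difference lies in how the inductive step is organised. The paper peels off the \emph{first} entry: it applies the inductive hypothesis to the tail $[a_2,\ldots,a_n]$ and then verifies, by explicit fraction algebra with the negative-denominator convention, that prepending $a_1$ (i.e.\ the map $x\mapsto a_1+1-1/x$) is compatible with the Farey-sum expression. You instead peel off the \emph{last} entry and phrase everything as a recursion on vector lifts in $\Z^2$. That route does work, and in fact needs less induction than you suggest: writing $A$ for the fixed unimodular matrix carrying the initial segment $[a_1+1,\ldots,a_{n-2}]$, the lifts in question are $v_n=AM\vects{1}{a_n}$, $v_{n-1}=A\vects{1}{a_{n-1}}$, $w_{n-1}=A\vects{1}{a_{n-1}+1}$ with $M$ the linear map $\vects{q}{p}\mapsto\vects{p}{a_{n-1}p-q}$, and the whole identity reduces to the one-line seed relation $|a_n+1|\vects{1}{a_{n-1}}+\vects{1}{a_{n-1}+1}=-M\vects{1}{a_n}$ pushed forward by $A$; the inductive hypothesis only enters in keeping track of which lifts are the canonical ones. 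That is what each approach buys: the paper's fraction manipulation never has to worry about choices of lift, at the cost of a longer verification, while your vector formulation is shorter and more structural but concentrates all the content in the representative bookkeeping you flag. On that point one caution: ``up to sign'' must be applied summand by summand, not as an overall sign. In your base case the expression $(a_2+1)\vects{1}{a_1+1}+\vects{1}{a_1+2}$ is not $\pm$ the correct vector (for $a_1=a_2=-2$ it gives $\vects{0}{1}$, i.e.\ $\infty$, rather than $-1/2$); one must take $|a_2+1|$ copies of the negative-denominator lift of $[a_1+1]$ and add the negative-denominator lift of $[a_1+2]$, which is what the paper's convention enforces and what your final collapsed vector implicitly does. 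With your proposed sublemma on canonical lifts stated explicitly, the argument goes through and recovers the lemma.
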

% !!!!!!!!!!!!!!!!!!!!!!!!!
% PPPPPPPPPP
\begin{proof}
We prove this by induction on $n$.  For $n = 2$, $$[a_1+1,a_2] = a_1+1 - \frac{1}{a_2} = \frac{a_1a_2+a_2-1}{a_2}.$$  We calculate,
\begin{align*}
|a_2+1|\cdot[a_1+1]\oplus[a_1+1+1] &= (-a_2-1)\cdot\left(\frac{-1-a_1}{-1}\right)\oplus\left(\frac{-2-a_1}{-1}\right) \\
&= \left(\frac{a_1a_2+a_1+a_2+1}{a_2+1}\right)\oplus\left(\frac{-2-a_1}{-1}\right) \\
&= \frac{a_1a_2+a_2-1}{a_2} = [a_1+1,a_2],
\end{align*}
where the abundant negative signs are because $a_i \leq -2$, and the denominator of the fraction associated to a negative number on the Farey graph must be negative.

For $n > 2$, we consider the numbers $$\frac{p}{q} = [a_2, \ldots, a_{n-1}]$$ and $$\frac{p'}{q'} = [a_2, \ldots, a_{n-1}+1].$$  From our induction hypothesis, we know that $$\frac{r}{s} = [a_2, \ldots, a_n] = |a_n+1|\cdot [a_2, \ldots, a_{n-1}] \oplus [a_2, \ldots, a_{n-1}+1] = |a_n+1|\cdot\left(\frac{p}{q}\right)\oplus\left(\frac{p'}{q'}\right) = \frac{|a_n+1|p+p'}{|a_n+1|q+q'}.$$  We now adjust the left hand side so that it represents the number we want, that is $$[a_1+1, a_2,\ldots, a_n] = a_1 + 1 - \frac{1}{r/s} = \frac{a_1r+r - s}{r}.$$  We then can calculate
\begin{align*}
|a_n+1|\cdot[a_1+1,a_2,\ldots,a_{n-1}]&\oplus[a_1+1,a_2,\ldots,a_{n-1}+1] \\
&= |a_n+1|\left(a_1+1-\frac{1}{p/q}\right) \oplus \left(a_1+1 - \frac{1}{p'/q'}\right) \\
&= |a_n+1|\cdot\left(\frac{a_1p+p-q}{p}\right)\oplus\left(\frac{a_1p'+p'-q'}{p'}\right) \\
&= \frac{|a_n+1|a_1p+|a_n+1|p-|a_n+1|q}{|a_n+1|p}\oplus\frac{a_1p'+p'-q'}{p'} \\
&= \frac{(|a_n+1|pa_1+p'a_1)+(|a_n+1|p+p')-(|a_n+1|q+q')}{|a_n+1|p+p'} \\
&= \frac{a_1r+r-s}{r} = [a_1+1,a_2,\ldots,a_n].
\end{align*}
\end{proof}
% PPPPPPPPPP

% !!!!!!!!!!!!!!!!!!!!!!!!!
\begin{lem}\label{Farey connected} Points on the standard Farey graph with labels $[a_1+1,a_2, \ldots, a_n]$ and $[a_1+1,a_2, \ldots, a_n+1]$, where $a_i \leq -2$, are connected by a geodesic. \end{lem}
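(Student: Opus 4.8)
The plan is to prove Lemma~\ref{Farey connected} by induction on $n$, running it in parallel with the identity of Lemma~\ref{Farey sum} (which, conveniently, is proved by the same kind of induction and whose conclusion I may freely invoke). The key observation is the translation dictionary between the Farey tessellation and $2\times 2$ integer matrices recorded in the excerpt: two labels $p/q$ and $p'/q'$ are joined by a geodesic of the tessellation if and only if the associated vectors $\vects{q}{p}$ and $\vects{q'}{p'}$ form a basis of $\Z^2$, equivalently $\bigl|\,p q' - p' q\,\bigr| = 1$. So the entire lemma reduces to the arithmetic claim that if $[a_1+1,a_2,\ldots,a_n] = r/s$ and $[a_1+1,a_2,\ldots,a_n+1] = r'/s'$ in lowest terms (with the sign convention that the denominator of a negative label is negative), then $|r s' - r' s| = 1$.

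First I would handle the base case $n=1$: here $[a_1+1] = (a_1+1)/1$ and $[a_1+2] = (a_1+2)/1$, so the determinant is $(a_1+1)\cdot 1 - (a_1+2)\cdot 1 = -1$, and indeed the two points are Farey-adjacent (they are consecutive integers, a geodesic of the tessellation). For the inductive step, set $p/q = [a_2,\ldots,a_n]$ and $p'/q' = [a_2,\ldots,a_n+1]$; by the induction hypothesis these are connected by a geodesic, i.e. $|pq' - p'q| = 1$. Now apply the single operation $x \mapsto a_1 + 1 - 1/x$ to both, exactly as in the proof of Lemma~\ref{Farey sum}: this sends $p/q$ to $(a_1 p + p - q)/p = [a_1+1,a_2,\ldots,a_n]$ and $p'/q'$ to $(a_1 p' + p' - q')/p' = [a_1+1,a_2,\ldots,a_n+1]$. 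The map $x \mapsto a_1+1-1/x$ is induced by the matrix $\matrixs{a_1+1}{-1}{1}{0}$, which lies in $\mathrm{GL}_2(\Z)$ (determinant $1$), so it preserves the property of a pair of vectors being a basis of $\Z^2$; hence the new pair is again Farey-adjacent. The only point requiring a small check is that the fractions produced are genuinely in lowest terms with the correct sign convention — but this follows because applying a $\mathrm{GL}_2(\Z)$ matrix to a primitive vector yields a primitive vector, and the signs work out since each $a_i \leq -2$ forces the relevant denominators negative, just as in the bookkeeping already carried out in Lemma~\ref{Farey sum}.

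I expect the main obstacle to be purely cosmetic rather than conceptual: keeping the sign conventions straight so that ``connected by a geodesic'' is correctly certified by $|pq'-p'q|=1$ (as opposed to getting a spurious sign or a factor that looks like $2$ before reduction). One clean way to sidestep the sign juggling entirely is to phrase the whole argument matrix-theoretically from the start — represent the continued fraction $[a_1+1,a_2,\ldots,a_n]$ as the image of $\infty = 1/0$ under the product $\matrixs{a_1+1}{-1}{1}{0}\matrixs{a_2}{-1}{1}{0}\cdots\matrixs{a_n}{-1}{1}{0}$, observe that $[a_1+1,\ldots,a_n+1]$ is the image of $1/0$ under the same product with the last factor replaced by $\matrixs{a_n+1}{-1}{1}{0}$, and note that $\infty$ and the image of $1/0$ under $\matrixs{a_n+1}{-1}{1}{0}$, namely $\infty$ and $(a_n+1)/1$, are Farey-adjacent; then left-multiplication by the fixed $\mathrm{GL}_2(\Z)$ matrix $\matrixs{a_1+1}{-1}{1}{0}\cdots\matrixs{a_{n-1}}{-1}{1}{0}$ preserves adjacency. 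This makes the lemma essentially immediate and avoids induction altogether, though I would likely present the inductive version to match the style of the neighbouring Lemma~\ref{Farey sum}.
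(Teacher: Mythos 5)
Your argument is correct, but it runs along a genuinely different track from the paper's. The paper inducts ``from the back'' of the continued fraction: it invokes Lemma~\ref{Farey sum} to write $[a_1+1,\ldots,a_n]$ as $|a_n+1|\cdot[a_1+1,\ldots,a_{n-1}]\oplus[a_1+1,\ldots,a_{n-1}+1]$, and then builds a chain of mediants, using only the combinatorial fact that if $a$ and $b$ span a geodesic then so do $b$ and $a\oplus b$; adjacency is never certified by a determinant. You instead induct ``from the front'', peeling off the map $x\mapsto a_1+1-1/x$, realising it as the $\mathrm{SL}_2(\Z)$ matrix $\matrixs{a_1+1}{-1}{1}{0}$, and using the dictionary ``geodesic $\Leftrightarrow$ the primitive vectors form a basis of $\Z^2$''. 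This is cleaner arithmetically (no bookkeeping with Farey sums, and it makes the lemma almost a one-liner in the matrix formulation), but note that it uses the \emph{converse} of the paper's remark: the paper only records that endpoints of a geodesic give a $\Z^2$-basis, whereas you also need that a unimodular pair of labels is joined by a geodesic --- a standard fact about the Farey tessellation, but one you should state or cite, since the paper's approach deliberately avoids it. One small slip in your optional matrix-only sketch: the adjacent pair to push forward by $P=\matrixs{a_1+1}{-1}{1}{0}\cdots\matrixs{a_{n-1}}{-1}{1}{0}$ should be $a_n=\matrixs{a_n}{-1}{1}{0}\cdot\infty$ and $a_n+1=\matrixs{a_n+1}{-1}{1}{0}\cdot\infty$, not $\infty$ and $a_n+1$ (the latter pair yields adjacency of $[a_1+1,\ldots,a_{n-1}]$ with $[a_1+1,\ldots,a_n+1]$ instead of the pair in the lemma); your main inductive version does not have this issue.
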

% !!!!!!!!!!!!!!!!!!!!!!!!!
% PPPPPPPPPP
\begin{proof}
We induct on $n$.  When $n=1$ this is clear.  When $n > 1$, we see from Lemma~\ref{Farey sum} that $$[a_1+1,a_2,\ldots,a_n] = |a_n+1|\cdot[a_1+1,a_2,\ldots,a_{n-1}]\oplus[a_1+1,a_2,\ldots,a_{n-1}+1].$$ Since by our inductive hypothesis $[a_1+1,a_2,\ldots,a_{n-1}]$ and $[a_1+1,a_2,\ldots,a_{n-1}+1]$ are connected by a geodesic, we see also that the three numbers $[a_1+1,a_2,\ldots,a_{n-1}]$, $[a+1,a_2,\ldots,a_{n-1},a_{n-1}+1]$ and $[a_1+1,a_2,\ldots,a_{n-1},-2]$ form the endpoints of a geodesic triangle, as the latter term is the sum of the two former terms, by Lemma~\ref{Farey sum}.  We now add $|a_n+1|-1$ additional copies of $[a_1+1,a_2,\ldots,a_{n-1}]$ to $[a_1+1,a_2,\ldots,a_{n-1},-2]$ to arrive at $[a_1+1,a_2,\ldots,a_n]$.  The proof is concluded by noting that as long as two numbers $a$ and $b$ are connected by a geodesic, so are $b$ and $a\oplus b$.
\end{proof}
% PPPPPPPPPP

In the following, we will be stabilising and changing the monodromy of an open book.  Throughout this process, we will track a single boundary component, calling it $K$ at every stage, even though we will be changing the manifold.  When stabilising the boundary component $K$ along a boundary-parallel stabilisation arc, the result has two boundary components where previously there was only one; the boundary component without a parallel positive Dehn twist will be called $K$.  When adding Dehn twists to the monodromy that are parallel to $K$, we change the manifold, but we still call that same boundary component $K$.  With this abuse of notation in mind, we are now ready to construct open books corresponding to transverse surgery.

% !!!!!!!!!!!!!!!!!!!!!!!!!
\begin{prop}\label{prop:admissible OBD} Let $r < 0$ be a rational number, with $r = [a_1+1, a_2, \ldots, a_n]$.  The open book supporting admissible transverse $r$-surgery with respect to the page slope on the binding component $K$ is obtained by, for each $i = 1, \ldots, n$ in order, stabilising $K$ positively $|a_i+2|$ times and adding a positive Dehn twist about $K$.  \end{prop}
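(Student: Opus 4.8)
The plan is to reduce Proposition~\ref{prop:admissible OBD} to the case of a single $+1$-surgery on a binding component (Theorem~\ref{thm:OBD connected binding} with $n=1$), combined with bookkeeping on the Farey tessellation to track the page slope, and then to invoke the classification of rational open books (Theorem~\ref{BEV OBD Thm}) together with the uniqueness of the contact structure supported by an open book. The key observation is that the operations ``stabilise $K$ positively along a boundary-parallel arc'' and ``add a positive Dehn twist about a curve parallel to $K$'' are, by Proposition~\ref{prop:plusminus one surgery} and the description in Section~\ref{sec:construction}, exactly the moves that shift labels around the Farey tessellation; a stabilisation fixes the meridian $\infty$ (West) and sends the page slope from $0$ toward $-1$, while a $+1$-surgery (with respect to the current page slope) fixes the page slope and moves the meridian. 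So the first step is to set up the Farey-tessellation dictionary cleanly: West is the meridian, East is the page slope, all labels are read with respect to the original $(\mu,\lambda)$ coordinates, and I should track what sequence of slopes the meridian passes through as I perform ``$|a_i+2|$ stabilisations, then one positive Dehn twist about $K$'' for $i = 1, \dots, n$.

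Next I would compute that this sequence realises precisely the negative continued fraction expansion. Here is where Lemma~\ref{Farey sum} and Lemma~\ref{Farey connected} do the work: after the first block of $|a_1+2|$ stabilisations the page slope has moved to a label computable from the Farey sum, and the subsequent positive Dehn twist (a $-1$-surgery with respect to the current page slope, i.e.\ a $+1$-surgery in the sense of Proposition~\ref{prop:plusminus one surgery} giving a negative Dehn twist — I must be careful about the sign conventions in Section~\ref{sec:construction}) moves the meridian to the next adjacent label on the path. Iterating, the meridian traces out exactly the path whose endpoints are governed by $[a_1+1, a_2, \dots, a_n]$, because Lemma~\ref{Farey connected} guarantees that each intermediate pair of labels we land on is joined by a geodesic, so the moves are legal (each surgery is $\pm 1$ with respect to the page slope), and Lemma~\ref{Farey sum} identifies the resulting slope after the full sequence with $[a_1+1,\dots,a_n]$. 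Thus the final manifold is topologically $r$-surgery on $K$ with respect to the original page slope.

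Having matched the topology, I then need to match the contact structure. Because each stabilisation is a \emph{positive} stabilisation, it preserves the supported contact structure; and by Proposition~\ref{prop:plusminus one surgery} each added positive Dehn twist about the boundary-parallel curve $K$ realises contact $(-1)$-surgery on a Legendrian leaf on the page — equivalently, by the same reasoning used in the proof of Theorem~\ref{thm:OBD connected binding}, the correct transverse surgery on $K$. Since the composite topological operation is $r$-surgery with $r<0$, i.e.\ an admissible slope (it lies in the negative region, below the page slope), Theorem~\ref{BEV OBD Thm} tells us that admissible transverse $r$-surgery on $K$ is supported by \emph{some} rational open book on the surgered manifold; and the open book we have explicitly constructed supports \emph{a} contact structure obtained from the $\pm1$-surgery moves, which agrees with admissible transverse $r$-surgery away from a neighbourhood of $K$ and on the surgery torus matches by Theorem~\ref{solid torus} (the surgery torus is a standard neighbourhood of the dual knot, hence tight, with the prescribed meridian and dividing slopes). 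Uniqueness of the supported contact structure then forces the two to coincide.

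The main obstacle I expect is the sign- and orientation-bookkeeping: making sure that ``positive stabilisation with a boundary-parallel arc'' followed by ``positive Dehn twist about $K$'' corresponds on the Farey tessellation to the correct \emph{direction} of travel of the meridian (so that we build $[a_1+1,\dots,a_n]$ and not some other continued fraction), and that each individual surgery in the chain is genuinely a $\pm1$-surgery with respect to the \emph{then-current} page slope — this is exactly what Lemma~\ref{Farey connected} is for, but one has to check the $|a_i+2|$-fold repetition interacts correctly with the remark that $[a_1+1,\dots,a_{n-1},-1] = [a_1+1,\dots,a_{n-1}+1]$, which is what makes the ``$+1$'' shift after each block land on the right label. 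Everything else is a routine consequence of the results already established.
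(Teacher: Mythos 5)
Your topological bookkeeping (Farey tessellation, Lemmas~\ref{Farey sum} and~\ref{Farey connected}, induction over the blocks of $|a_i+2|$ stabilisations followed by one boundary-parallel positive Dehn twist) is essentially the paper's argument, modulo a sign slip you flag yourself: a positive Dehn twist about $K$ is $-1$-surgery with respect to the current page slope, realising contact $(-1)$-surgery on a Legendrian realisation of a page push-off, not a ``$+1$-surgery giving a negative Dehn twist.'' The genuine gap is in your identification of the contact structure. You propose to conclude by saying the constructed structure agrees with admissible transverse $r$-surgery outside a neighbourhood of $K$, that on the surgery solid torus both are tight with matching boundary data, and then to invoke Theorem~\ref{solid torus} plus ``uniqueness of the supported contact structure.'' Neither step works as stated. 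Theorem~\ref{solid torus} (Kanda) gives uniqueness only when the dividing curves are longitudinal with respect to the core; for a general negative rational coefficient the entire glued-in solid torus carries \emph{many} tight contact structures, distinguished by basic-slice signs (this is Honda's count $|(a_1+2)(a_2+1)\cdots a_n|$, and it is precisely why contact $r$-surgery is not unique and why the proposition needs a proof at all). Moreover it is not established that the restriction of the constructed structure to that solid torus is tight, nor that it is a ``standard neighbourhood of the dual knot'' — it is not, once more than one basic-slice layer is involved. Finally, ``an open book supports a unique contact structure'' applies to a single open book; here you have two different open books (your integral one and the rational one induced by surgery), so uniqueness cannot be used to identify their supported structures — that identification is the content of the proposition.

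The way the paper closes this gap is stepwise rather than by gluing: each individual addition of a boundary-parallel positive Dehn twist is topologically a $-1$-surgery with respect to the \emph{then-current} page slope, so by Theorem~\ref{BEV OBD Thm} (together with Proposition~\ref{prop:plusminus one surgery}) the resulting open book supports admissible transverse surgery on the \emph{current} binding component, which is the surgery dual of the previous steps; positive stabilisation does not change the transverse isotopy class of that binding; and a composition of admissible transverse surgeries on successive dual knots is a single admissible transverse surgery on the original $K$, with total coefficient given by your Farey computation. This composition fact is the crux of the contact-geometric half of the proof, and it is absent from your proposal; without it (or some genuine substitute, e.g.\ an explicit model contact form as in the proof of Theorem~\ref{thm:OBD connected binding} adapted to the admissible case), the argument does not go through.
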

% !!!!!!!!!!!!!!!!!!!!!!!!!
% PPPPPPPPPP
\begin{proof} We first show that this topologically supports the manifold we are interested in, and then we will show that it supports the contact structure coming from admissible transverse surgery.  We prove the topological statement by induction on $n$, showing that in addition, the page slope after surgery is $[a_1+1, a_2, \ldots, a_n+1]$.  If $n = 1$, then $r$ is a negative integer, and we can see that positively stabilising $K$, $|r+1|=|a_1+2|$ times starting from the standard Farey graph will move the label $r+1$ to East, and North's label will be $r$.  Then $(-1)$-surgery on the binding component $K$ in the new page will give us $r$ surgery on $K$, with respect to the original page slope.  This $(-1)$-surgery corresponds to adding a positive Dehn twist about the binding, by Theorem~\ref{thm:OBD connected binding}.  Note also that the page slope after this process is equal to $r+1$.

If $n > 1$, then let $r' = [a_1+1, a_2, \ldots, a_{n-1}]$, and let $r'' = [a_1+1,a_2,\ldots,a_{n-1}+1].$  Then $r' < r < r''$, and there is a geodesic in the Farey graph between $r'$ and $r''$, and between $r'$ and $r$, by Lemma~\ref{Farey sum} and Lemma~\ref{Farey connected}. Applying the induction hypothesis to $r'$, we can do $r'$ surgery using the construction, and in the process, send $r''$ to the page slope.  We claim that in this new coordinate system, $r$ gets sent to the negative integer $a_n+1$.  Indeed, note that since $r'$ gets sent to $\infty$ (the meridional slope), and $r''$ gets sent to $0$ (the page slope), $r$ gets sent to $|a_n+1|\cdot(\infty)\oplus (0/(-1))$, which is $a_n+1$.  Now we can lower the page slope to $a_n+2$ by doing $|a_n+2|$ stabilisations of $K$, and then do $(-1)$-surgery with respect to the new page slope, which corresponds to adding a negative Dehn twist around the boundary.  Thus the open book topologically supports the manifold coming from $r$-surgery on $K$, with respect to the page slope.

We now show that the supported contact structure is that coming from admissible transverse surgery.  The algorithm consists of sequences of stabilisations and surgeries on the knot $K$ (which we have tracked through this process, as in the discussion before this proposition).  Note that the transverse knot $K$ before any stabilisation is transverse isotopic to the binding component $K$ after stabilisation.  In addition, admissible surgery on a knot $K$, followed by admissible surgery on the dual knot to $K$, \textit{ie.\@} the core of the surgery torus, is equivalent to a single admissible surgery on $K$.  Thus, since stabilisation does not change the transverse knot type of $K$, the algorithm consists of a series of surgeries on successive dual knots.  It is enough to show that each of these surgeries corresponds to an admissible transverse surgery to show that the entire algorithm corresponds to an admissible transverse surgery.  But each individual surgery (that is, adding a boundary-parallel positive Dehn twist to the monodromy) is topologically a $(-1)$-surgery with respect to the page slope, and thus by Theorem~\ref{BEV OBD Thm} and Proposition~\ref{prop:plusminus one surgery}, the resulting open book supports the contact structure coming from admissible transverse surgery on the binding component.\end{proof}
% PPPPPPPPPP

\begin{remark} Compare our construction to Baker, Etnyre, and van Horn-Morris \cite{BEVHM}, who construct open books for the result of admissible transverse surgery by resolving the induced rational open book using cables of the binding.  Our construction consists of taking a cable at each step (\textit{ie.\@} for each $i$ in Proposition~\ref{prop:admissible OBD}).   Our construction agrees with theirs when the surgery coefficient is a negative integer. \end{remark}

\begin{example}\label{eg:neg8over5} We can calculate that $-8/5 = [-3+1,-3,-2]$.  Thus, given a knot $K$ in the binding of an open book, to perform admissible transverse $-8/5$-surgery on $K$ with respect to the page slope, we would stabilise once, add a positive Dehn twist around $K$, stabilise once, and then add two positive Dehn twists about $K$. To see this, we track the labels on West and East through this process, via an ordered pair $(\rm{West}, \rm{East}$).  We start with $(\rm{West}, \rm{East}) = (\infty,0)$, the standard labeling.  The initial stabilisation creates the labeling $(\infty,-1)$.  The $(-1)$-surgery (corresponding to adding the positive Dehn twist) changes the labeling to $(-2,-1)$.  Another stabilisation keeps West's label fixed and creates the labeling $(-2,-3/2)$.  The $(-1)$-surgery changes the labels to $(-5/3,-3/2)$.  At this point, North's label is $-8/5$.  Finally, the last $(-1)$-surgery creates the labeling $(-8/5,-3/2)$. See Figure~\ref{fig:8over11}, where if you ignore the negative Dehn twists, the open book on the left is admissible transverse $-8/5$-surgery on the maximum self-linking right-handed trefoil in $(S^3,\xi_{\rm{std}})$.\end{example}

\subsubsection{Open books compatible with inadmissible transverse surgery}
\label{sec:inadmissible OBD}

If $r > 0$ with respect to the page slope, pick the least positive integer $n$ such that $1/n \leq r$.  Then by Lemma~\ref{inadmissible then admissible}, doing inadmissible $1/n$ surgery followed by admissible $r'$-surgery, for some $r' < 0$, is equivalent to doing inadmissible $r$-surgery.  This corresponds to adding $D_K^{-n}$ to the monodromy before working out the open book for admissible $r'$-surgery, by Theorem~\ref{thm:OBD connected binding}.

To work out $r'$, note that if $1/n \leq r = p/q < 1/(n-1)$, then $$\frac{p}{q} = a\cdot\left(\frac{1}{n}\right) \oplus b\cdot\left(\frac{1}{n-1}\right) = \frac{a+b}{an+b(n-1)}$$ for some positive integers $a$ and $b$. This doesn't directly correspond to operations on the Farey graph itself, but is purely an algebraic assertion. We can write this as
$$\vect{q}{p}=\matrixb{n}{n-1}{1}{1}\vect{a}{b}.$$
The matrix is invertible, so we see that $a = q + p - np$, and $b = np - q$.

After doing inadmissible $1/n$-surgery, we note that on the Farey graph, the label $1/n$ has moved to West, the label $1/(n-1)$ has moved to North.  The label we have called $r'$ is at the point in between West and North corresponding to how $r$ was sitting relative to $1/n$ and $1/(n-1)$.  We can conclude that $r' = a\cdot(\infty)\oplus b\cdot(-1)$, or more explicitly,
$$r' = a\cdot\left(\frac{1}{0}\right)\oplus b\cdot\left(\frac{1}{-1}\right) = \frac{(q+p-np)(1)+(np-q)(1)}{(q+p-np)(0)+(np-q)(-1)} = \frac{p}{q-np}.$$

% !!!!!!!!!!!!!!!!!!!!!!!!!
\begin{prop}\label{prop:inadmissible OBD} Let $r = p/q > 0$ be a rational number, and let $n$ be a positive integer such that $1/n < r$.  The open book supporting inadmissible transverse $r$-surgery with respect to the page slope on the binding component $K$ is obtained by first adding $n$ positive Dehn twists about $K$, and then performing transverse admissible $r'$-surgery on $K$ (as in Prop~\ref{prop:admissible OBD}), where $r' = \frac{p}{q-np}.$  \end{prop}
% !!!!!!!!!!!!!!!!!!!!!!!!!

\begin{remark} In the case of integer surgery, our open books are identical to those of Lisca and Stipsicz \cite{LS:surgery}.  In addition, although he does not explicitly construct such examples, our open books for integer surgery can be constructed using the fibre sum operation of Klukas \cite{Klukas}.  The construction of Baker, Etnyre, and van Horn-Morris in \cite{BEVHM} does not cover the case $r > 0$.  \end{remark}

\begin{remark} Since the construction depends on the page slope, lowering the page slope by positive stabilisations prior to beginning the construction will give alternative open books for the same inadmissible surgery.  Note that by stabilising sufficiently many times, we can ensure that the page slope is smaller than any given slope.  Thus this construction allows us to create open books for any inadmissible transverse surgery, even if the slope is smaller than the original page slope.  Of course, if the inadmissible surgery coefficient is lower than the original page slope, the result is overtwisted.  \end{remark}

\begin{example} If we stabilise $3$ times, the effect on the Farey graph labeling is to keep West's label fixed, to change East's label to $-3$, and to change South's label to $-2$.  Thus, performing $(+1)$-surgery with respect to the new page slope relabels West with $-2$, and hence is the same as performing inadmissible transverse $-2$-surgery on the binding component.\end{example}

\begin{figure}[htbp]
\begin{center}
\begin{overpic}[scale=1.5]{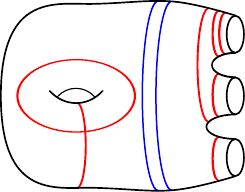}
\put(95,120){\LARGE $-$}
\put(120,120){\LARGE $-$}
\end{overpic}
\hspace{2cm}\begin{overpic}[scale=1.5]{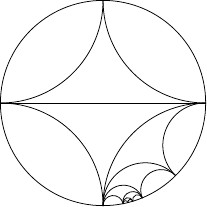}
\put(-12,71){$\infty$}
\put(151,71){$0$}
\put(65,151){$-1$}
\put(72,-8){$1$}
\put(126,14){$\frac{1}{2}$}
\put(103,-3){$\frac{2}{3}$}
\put(83,-9){$\frac{3}{4}$}
\put(96,-6){$\frac{5}{7}$}
\put(87,-8){$\frac{8}{11}$}
\end{overpic}
\caption{On the left, an open book for inadmissible transverse $8/11$-surgery on the maximum self-linking right-handed trefoil in $(S^3,\xi_{\rm{std}})$.  Positive Dehn twists are red and unmarked, and negative Dehn twists are blue and marked with a minus sign.  The Farey graph leading to $8/11$ is on the right.  Without the negative Dehn twists, this corresponds to admissible transverse $-8/5$-surgery on the same knot.}
\label{fig:8over11}
\end{center}
\end{figure}

\begin{example} In Figure~\ref{fig:8over11}, we see an open book for inadmissible transverse $8/11$-surgery on the right-handed trefoil in $(S^3,\xi_{\rm{std}})$ with self-linking $1$.  We see this by noticing that $1/2 < 8/11 < 1$, so we start by performing two negative Dehn twists around the binding.  After that, we follow the process for admissible transverse $-8/5$-surgery, as $\frac{8}{11-2\cdot8} = -\frac{8}{5}$. This then follows the same steps as in Example~\ref{eg:neg8over5}. \end{example}

\begin{example} We can describe half and full Lutz twists using inadmissible transverse surgery.  A half Lutz twist is given by inadmissible transverse $\infty$-surgery, that is, adding a $\pi$ rotation of the contact planes onto $M\backslash K$.  A full Lutz twist is two half Lutz twists.  We can create an open book for $0$-surgery on a binding component by (a) stabilising, and then (b) doing a $(+1)$-surgery with respect to the page.  This gives a ``quarter" Lutz twist.  Doing this twice with a positive stabilisation in between will be a half Lutz twist, and four times with positive stabilisations separating each application will be a full Lutz twist. The positive stabilisation in the middle serves to set the new page slope to either $0$ or $\infty$, so that $0$-surgery (\textit{ie.\@} page slope surgery) will add another $\pi$ rotation to the contact structure

While doing a quarter Lutz twist, before doing step (a), we can do $1/n$ surgery on the binding component with respect to the page, for any integer $n \neq 0$, and then perform steps (a) and (b).  This gives a family of open books for a quarter Lutz twist all with the same page.  The Dehn twists involved in this $1/n$ surgery correspond to contact $(\pm1)$-surgery on boundaries of overtwisted discs, which do not change the manifold or the isotopy class of the contact structure.  Note, though, that if we want to compose quarter Lutz twists, instead of performing a positive stabilisation before repeating steps (a) and (b), the normalisation required to make the page slope equal to the original meridian slope will depend on $n$.  Thus, we can produce a family of open books for a half and full Lutz twist, many with the same page.  Ozbagci and Pamuk \cite{OP}, based on work of Ding, Geiges, and Stipsicz \cite{DGS:Lutz} and Etnyre \cite{Etnyre:planar}, have also obtained open books for a Lutz twist, based on a contact surgery diagram.  Our construction differs from theirs, and produces not easily comparable open books, as they do a Lutz twist on the transverse push-off of a Legendrian knot lying in the page of an open book, whereas our transverse knot is in the binding. \end{example}

%%%%%%%%%%%%%%%%%%%%%%%%%
\subsection{Comparing Transverse Surgery to Contact Surgery}
%%%%%%%%%%%%%%%%%%%%%%%%%

% PPPPPPPPPP
\begin{proof}[Proof of Theorem~\ref{Transverse=Legendrian}]
Baker, Etnyre, and van Horn-Morris \cite[Lemma 6.5]{BEVHM} discuss how to take any transverse knot $K \subset \Mxi$ and put it in the binding of an open book compatible with $\Mxi$.  If $K$ is the unique binding component, do a boundary-parallel positive stabilisation on the open book.  Now a push-off $L$ of the binding can be Legendrian realised on this new page, with contact framing number given by the new page slope.  Notice that if we stabilise the open book (again, if we had to stabilise at the beginning) at the boundary corresponding to $K$, a Legendrian approximation $L'$ realised on the page of the doubly stabilised open book is isotopic to a negative Legendrian stabilisation of the Legendrian knot $L$.  We can see that this stabilisation is negative by noticing that $L$ and $L'$ are Legendrian approximations of the same transverse knot: negative stabilisation preserves the isotopy class of the transverse push-off, whereas positive stabilisation does not.

Proposition~\ref{prop:plusminus one surgery} shows that adding to the monodromy a positive (resp.\ negative) Dehn twist on a push-off of the binding, after Legendrian realisation, is equivalent to contact $(-1)$-surgery (resp.\ contact $(+1)$-surgery). Thus the open book construction from Proposition~\ref{prop:admissible OBD}, is a sequence of contact $(\pm1)$-surgeries on a Legendrian approximation of $K$ and its negative stabilisations.

We check that the sequence of surgeries and stabilisations provided by Proposition~\ref{prop:inadmissible OBD} for $r$-surgery ($r>0$ with respect with to the page slope) is the same as that provided by Ding and Geiges \cite{DG:surgery} in Construction~\ref{DingGeiges} for contact $r$-surgery on $L$, with all negative bypass layers:

\begin{itemize}
\item the contact $(+1)$-surgeries performed at the beginning of Construction~\ref{DingGeiges} correspond by Theorem~\ref{thm:OBD multiple boundaries} to adding negative Dehn twists to the monodromy around $K$,
\item the surgery coefficient (with respect to the contact framing) with which we have to negative contact surgery in Construction~\ref{DingGeiges} is exactly the same surgery coefficient $r'$ (which is with respect to the page slope),
\item negatively stabilising a push-off (resp.\ performing contact $(-1)$-surgery on a push-off) in Construction~\ref{DingGeiges} corresponds to stabilising the open book (operation (1) from Section~\ref{sec:construction}) (resp.\ operation (2), adding a boundary-parallel positive Dehn twist) in Proposition~\ref{prop:admissible OBD},
\end{itemize}

Finally, we see that the surgeries on Legendrian knots in Construction~\ref{DingGeiges} and corresponding open book operations in Proposition~\ref{prop:inadmissible OBD} are performed in the same order, and so the equivalence follows.
\end{proof}
% PPPPPPPPPP

Having identified inadmissible transverse surgery and positive contact surgery (with the choices that give $\xi^-$), we can now easily see that Corollary~\ref{Legendrian Surgery on Stabilisation} follows from Theorem~\ref{Transverse=Legendrian}.

% PPPPPPPPPP
\begin{proof}[Proof of Corollary~\ref{Legendrian Surgery on Stabilisation}]
Given a Legendrian $L$ in $\Mxi$, let $L_-$ be a single negative stabilisation of $L$, and let $K$ be a positive transverse push-off of $L$.  Given any $r \neq 0$, Theorem~\ref{Transverse=Legendrian} implies that contact $r$-surgery on $L$ (with all negative stabilisation choices in Construction~\ref{DingGeiges}) is equivalent to transverse $f_L+r$-surgery on $K$ (where $f_L$ is the contact framing on $L$, and the surgery is admissible if $r < 0$ and inadmissible if $r > 0$).  Also according to Theorem~\ref{Transverse=Legendrian}, (in)admissible transverse $f_L+r$-surgery on $K$ is equivalent to contact $(f_L+r-f_{L_-})$-surgery on $L_-$, which is contact $(r+1)$-surgery on $L_-$ (with all negative stabilisation choices).
\end{proof}
% PPPPPPPPPP

%%%%%%%%%%%%%%%%%%%%%%%%%
%%%%%%%%%%%%%%%%%%%%%%%%%
\section{Tight Surgeries}
%%%%%%%%%%%%%%%%%%%%%%%%%
%%%%%%%%%%%%%%%%%%%%%%%%%
\label{sec:tight surgeries}

In this section, we will use results from Heegaard Floer homology to prove Theorem~\ref{thm:preserve tightness}.  Our results also allow us to say more about some contact geometric invariants, namely the contact width and the tight transverse surgery interval.  We comment on these at the end of this section.

%%%%%%%%%%%%%%%%%%%%%%%%%
\subsection{Heegaard Floer Homology}
%%%%%%%%%%%%%%%%%%%%%%%%%

We outline the relevant constructions involved in Heegaard Floer homology, as well as a few theorems.  See \cite{OS:hf1,OS:hf2} for more details.

Given a closed $3$-manifold $M$, we choose a Heegaard decomposition $(\Sigma, \bm{\alpha},\bm{\beta})$ of $M$.  Here, $\Sigma$ is a genus $g$ surface, $\bm{\alpha} = \{\alpha_1, \ldots, \alpha_g\}$ and $\bm{\beta} = \{\beta_1,\ldots,\beta_g\}$ are homologically-independent collections of essential simple closed curves on $\Sigma$ such that $\alpha_i \cap \alpha_j = \beta_i \cap \beta_j = \emptyset$ for $i \neq j$. Furthermore, we recover $M$ by attaching $2$-handles to $\Sigma \times [0,1]$ along the $\bm{\alpha} \times \{0\}$ and $\bm{\beta} \times \{1\}$ curves, and adding $3$-handles to the resulting boundary.

Given such a Heegaard decomposition, we choose a point $z \in \Sigma \backslash\left(\bm{\alpha}\cup\bm{\beta}\right).$  The \textit{Heegaard Floer chain group} $\cf(\Sigma,\bm{\alpha},\bm{\beta},z)$ is generated by $g$-tuples of intersections of $\bm{\alpha}$ with $\bm{\beta}$, such that there is one intersection point on each of the $\bm{\alpha}$ and each of the $\bm{\beta}$ curves.  The differential is provided by counting holomorphic curves in the $g$-fold symmetric product of $M$. The homology of this complex is independent of all choices.  It is called the \textit{Heegaard Floer homology} of $M$, and is written $\hf(M)$.

Given a cobordism $W:M \to N$, where $\bd W = -M \cup N$, it induces a map $F_W:\hf(M) \to \hf(N)$.  This map can be seen by counting certain holomorphic triangles in a symmetric product of the Heegaard surface $(\Sigma,\bm{\alpha},\bm{\beta},\bm{\gamma},z)$.  Here, $(\Sigma,\bm{\alpha},\bm{\beta},z)$ defines $M$, and $(\Sigma,\bm{\alpha},\bm{\gamma},z)$ defines $N$.  The map $F_W$ induced by the cobordism satisfies the following adjunction inequality.  (The actual result is more refined than we present it here, but this is all we will need.)

% !!!!!!!!!!!!!!!!!!!!!!!!!
\begin{thm}[Ozsváth--Szabó \cite{OS:4manifolds1}]\label{hfadjunction} If $W$ contains a homologically non-trivial closed surface $S$ with genus $g(S) \geq 1$ such that $[S] \cdot [S] > 2g(S) - 2$, then $F_W = 0$. \end{thm}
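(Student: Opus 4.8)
The plan is to reduce to a single Spin$^c$ structure, lower the self-intersection by blowing up, and then factor the cobordism map through the Floer homology of a product. Since $F_W = \sum_{\mathfrak{s}} F_{W,\mathfrak{s}}$ over $\mathfrak{s}\in\mathrm{Spin}^c(W)$, it suffices to prove $F_{W,\mathfrak{s}}=0$ for each $\mathfrak{s}$. Fix $\mathfrak{s}$. First I would blow $W$ up at a smooth point of $S$ and pass to the proper transform: this replaces $(W,S)$ by $(W\#\overline{\mathbb{CP}^2},\widetilde S)$ with $g(\widetilde S)=g(S)$ and $[\widetilde S]^2=[S]^2-1$, while the blow-up formula for $\hf$-cobordism maps identifies $F_{W,\mathfrak{s}}$ with $F_{W\#\overline{\mathbb{CP}^2},\widetilde{\mathfrak{s}}}$ for either lift $\widetilde{\mathfrak{s}}$ with $\langle c_1(\widetilde{\mathfrak{s}}),E\rangle=\pm1$; moreover $\langle c_1(\widetilde{\mathfrak{s}}),[\widetilde S]\rangle=\langle c_1(\mathfrak{s}),[S]\rangle\pm1$, with the sign of the change at our disposal. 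Iterating $[S]^2$ times (allowed since $[S]^2>2g-2\geq 0$), I may assume $[\widetilde S]^2=0$, so a tubular neighborhood of $\widetilde S$ is $\Sigma_g\times D^2$ with separating boundary $\Sigma_g\times S^1$.

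Next, cut the blown-up cobordism along $\Sigma_g\times S^1$: by the composition (gluing) law for cobordism maps, $F_{W\#k\overline{\mathbb{CP}^2},\widetilde{\mathfrak{s}}}$ factors through $\hf(\Sigma_g\times S^1,\widetilde{\mathfrak{s}}|_{\Sigma_g\times S^1})$, so it is enough to arrange that this intermediate group vanishes. The relevant numerical invariant is $\langle c_1(\widetilde{\mathfrak{s}}|_{\Sigma_g\times S^1}),[\Sigma_g\times\mathrm{pt}]\rangle=\langle c_1(\widetilde{\mathfrak{s}}),[\widetilde S]\rangle$, which by the previous paragraph equals $\langle c_1(\mathfrak{s}),[S]\rangle+\epsilon$ for any $\epsilon$ in the arithmetic progression $\{-k,-k+2,\ldots,k-2,k\}$, where $k=[S]^2$. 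By the Wu relation $\langle c_1(\mathfrak{s}),[S]\rangle\equiv[S]\cdot[S]\pmod 2$, every such value is even, and since $k>2g-2$ the reachable set consists of at least $2g$ consecutive even integers; but the ``allowed window'' $[-(2g-2),2g-2]$ contains only $2g-1$ even integers, so some choice of $\epsilon$ pushes $\langle c_1(\widetilde{\mathfrak{s}}),[\widetilde S]\rangle$ strictly outside $[-(2g-2),2g-2]$. Choosing the blow-ups accordingly, the three-dimensional adjunction inequality — equivalently, Ozsváth--Szabó's computation that $\hf(\Sigma_g\times S^1,\mathfrak{s}_m)=0$ whenever $|\langle c_1(\mathfrak{s}_m),[\Sigma_g]\rangle|>2g-2$ — gives $\hf(\Sigma_g\times S^1,\widetilde{\mathfrak{s}}|_{\Sigma_g\times S^1})=0$. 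Hence $F_{W\#k\overline{\mathbb{CP}^2},\widetilde{\mathfrak{s}}}=0$, so $F_{W,\mathfrak{s}}=0$, and summing over $\mathfrak{s}$ yields $F_W=0$.

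The delicate ingredients are the two imported black boxes together with the Spin$^c$ bookkeeping linking them: (i) the naturality package for $\hf$ — the blow-up formula and, above all, the gluing law that factors a cobordism map through the Floer homology of a separating hypersurface (note that, unlike the closed four-manifold invariant, $\hf$-cobordism maps require no $b_2^+$ hypothesis, which is why none appears in the statement); and (ii) the product computation of $\hf(\Sigma_g\times S^1,\mathfrak{s}_m)$, where the genus enters quantitatively. I expect the bookkeeping step to demand the most care: one must check, via the Wu parity constraint and the counting argument above, that the hypothesis $[S]\cdot[S]>2g(S)-2$ is \emph{exactly} what forces the restricted Spin$^c$ structure out of the support window for \emph{every} starting $\mathfrak{s}$, not merely a cooperative one. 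A variant avoiding blow-ups works directly with the disk bundle $D_n(\Sigma_g)$ and Ozsváth--Szabó's computation of $\hf$ of circle bundles over surfaces, trading one computation for a slightly less standard one.
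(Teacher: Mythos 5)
The paper does not actually prove this statement: it is imported verbatim as background from Ozsváth--Szabó \cite{OS:4manifolds1} (with the remark that the true result is more refined), so your argument can only be measured against the standard proof of such adjunction-type vanishing theorems. Measured that way, your skeleton is the right one, and your bookkeeping is correct where it is carried out: blowing up along $S$ until the proper transform $\widetilde S$ has square zero, invoking $\hf(\Sigma_g\times S^1,\mathfrak t)=0$ for $|\langle c_1(\mathfrak t),[\Sigma_g]\rangle|>2g-2$, and the Wu-parity count (at least $2g$ reachable consecutive even values versus only $2g-1$ even values in the window $[-(2g-2),2g-2]$) is exactly where the hypothesis $[S]\cdot[S]>2g-2$ enters.

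The genuine gap is the factorization step. The hypersurface $\Sigma_g\times S^1=\partial\nu(\widetilde S)$ does separate the blown-up cobordism, but it separates $\nu(\widetilde S)$ from its complement, and \emph{both} ends $M$ and $N$ lie in that complement; it does not decompose $W\#k\overline{\mathbb{CP}^2}$ as a composite $W_1\cup_Y W_2$ with $\partial W_1=-M\sqcup Y$ and $\partial W_2=-Y\sqcup N$. So the gluing law you invoke does not apply, and $F_{W',\widetilde{\mathfrak s}}$ does not literally factor through $\hf(\Sigma_g\times S^1,\widetilde{\mathfrak s}|_{\Sigma_g\times S^1})$. The standard repair is to tube $\nu(\widetilde S)$ to the incoming end, writing $W'=W_1\cup_{M\#(\Sigma_g\times S^1)}W_2$ with $W_1\cong (M\times I)\natural(\Sigma_g\times D^2)$, and then to use the connected-sum (K\"unneth) formula, which kills $\hf\bigl(M\#(\Sigma_g\times S^1),\mathfrak t\bigr)$ whenever the $\Sigma_g\times S^1$--factor of $\mathfrak t$ lies outside the window; this step is missing from your write-up. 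A second point that needs care even after the repair: the composition law identifies $F_{W_2,\mathfrak s_2}\circ F_{W_1,\mathfrak s_1}$ with the \emph{sum} of $F_{W',\mathfrak s'}$ over all $\mathfrak s'\in\mathrm{Spin}^c(W')$ restricting to $(\mathfrak s_1,\mathfrak s_2)$, not with your single chosen lift $\widetilde{\mathfrak s}$. As written you therefore obtain the vanishing of such a sum, and you still must rule out cancellation among the several $\mathrm{Spin}^c$ structures sharing those restrictions (for instance via the twisted-coefficient refinement of the composition law, or by an argument showing every term of the sum is identified under the blow-up formula with $\pm F_{W,\mathfrak s}$ for the same original $\mathfrak s$); neither is automatic, and without it the proof does not yet give $F_{W,\mathfrak s}=0$ for each fixed $\mathfrak s$.
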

% !!!!!!!!!!!!!!!!!!!!!!!!!

Given an open book decomposition $\Sigphi$ for $\Mxi$, we can define a Heegaard decomposition of $-M$ and an element $c(\xi) \in \hf(-M)$ distinguished up to sign.  This was originally defined by Ozsváth and Szabó \cite{OS:contact}, but our presentation will follow the description of Honda, Kazez, and Matić \cite{HKM:contactclass}.  Let $\Sigma' = \Sigma_0 \cup -\Sigma_{1/2}$ be the Heegaard surface, where $\Sigma_t = \Sigma \times \{t\} \subset \Sigma \times [0,1]$ in the mapping torus construction of $M$ from the open book.  Choose a basis of arcs $\gamma_1, \ldots, \gamma_k$ for $\Sigma_0$, and for each $i$, let $\gamma'_i$ be a push-off of $\gamma_i$, where the endpoints are pushed in the direction of the orientation on the boundary of $\Sigma$.  We let the $\bm{\alpha}$ curves be $\alpha_i = \gamma_i \cup \gamma_i$, where the first $\gamma_i$ is sitting on $\Sigma_0$, and the second on $-\Sigma_{1/2}$.  We let $\beta_i = \gamma'_i \cup \phi^{-1}(\gamma'_i)$, where again the first $\gamma'_i$ is sitting on $\Sigma_0$, and $\phi^{-1}(\gamma'_i)$ is sitting on $-\Sigma_{1/2}$.  Place $z$ in $\Sigma_0$.  Then $(\Sigma, \bm{\beta}, \bm{\alpha}, z)$ is a Heegaard diagram for $-M$, where we switch the roles of $\bm{\alpha}$ and $\bm{\beta}$ to get the correct orientation.

For each $i$, the curves $\alpha_i$ and $\beta_i$ intersect each other exactly once inside $\Sigma_0$, at the point $c_i$.  It can be shown that the generator $\bm{c} = \{c_1,\ldots,c_k\}$ is a cycle, and thus it defines a class $\bm{c} \in \hf(-M)$.  Honda, Kazez, and Matić \cite{HKM:contactclass} identify this class with a class previously defined by Ozsváth and Szabó \cite{OS:contact}.  Ozsváth and Szabó show that this class is independent of the choice of open book decomposition for $\Mxi$, and defines the \textit{Heegaard Floer contact invariant} $c(\xi)$ of $\Mxi$.

% !!!!!!!!!!!!!!!!!!!!!!!!!
\begin{thm}[Ozsváth--Szabó \cite{OS:contact}] The Heegaard Floer contact invariant $c(\xi)$ of $\Mxi$ satisfies the following properties.
\begin{itemize}
\item If $\Mxi$ is overtwisted, then $c(\xi) = 0$.
\item If $\Mxi$ is strongly or Stein fillable, then $c(\xi) \neq 0$.
\item If $W:\Mxi \to (N,\xi')$ is the cobordism induced by contact $(+1)$-surgery on a Legendrian knot, then $F_{-W}(c(\xi)) = c(\xi').$
\end{itemize}
\end{thm}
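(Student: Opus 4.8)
The plan is to establish the three properties in reverse order: the surgery formula (third bullet) does the real work, and the first two bullets follow from it together with model computations.

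First I would prove the surgery formula. Begin from a supporting open book $\Sigphi$ for $\Mxi$ in which the Legendrian knot $L$ is realized on a page---possible after positive stabilization, by Giroux's correspondence---so that, by Proposition~\ref{prop:plusminus one surgery} (in the appropriate orientation convention), $(N,\xi')$ is supported by $(\Sigma,\phi\circ D_L^{-1})$. Next, write the Honda--Kazez--Mati\'c Heegaard diagrams $(\Sigma_0\cup-\Sigma_{1/2},\bm\alpha,\bm\beta,z)$ for $-M$ and $(\Sigma_0\cup-\Sigma_{1/2},\bm\alpha,\bm\beta',z)$ for $-N$ from a single basis of arcs $\gamma_1,\dots,\gamma_k$ for $\Sigma_0$, chosen so that $L$ meets exactly one arc, say $\gamma_1$, transversely once. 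Then $\bm\beta$ and $\bm\beta'$ agree outside the curve built from $\gamma_1$, and $\beta_1'$ differs from $\beta_1$ only by the effect of $D_L^{-1}$ on the $-\Sigma_{1/2}$ copy. The orientation-reversed surgery cobordism $-W$ is then modeled by the Heegaard triple $(\Sigma_0\cup-\Sigma_{1/2},\bm\alpha,\bm\beta,\bm\beta',z)$, where $\beta_i'$ is a small Hamiltonian perturbation of $\beta_i$ for $i\geq2$. The heart of the argument is to show that the contact generators $\bm c=\{c_1,\dots,c_k\}$ and $\bm c'=\{c_1',\dots,c_k'\}$ are joined by a single holomorphic triangle---which, as in the proofs of handleslide and isotopy invariance, is a product of innermost ``small'' triangles, has the correct Maslov index, and avoids $z$---so that $F_{-W}(\bm c)=\bm c'$. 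Checking that the twisted curve $\beta_1'$ still cuts out the expected small triangle near $c_1$, and that no stray triangle (in particular none passing through $z$) contributes, is the main obstacle.

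Given the surgery formula, the first bullet follows by reducing to a model. An overtwisted $\xi$ admits a supporting open book that is a negative stabilization---equivalently, whose monodromy has an isolable left-handed Dehn twist; realizing the twisting curve as a Legendrian $L$ on a page exhibits $\Mxi$ as contact $(+1)$-surgery on $L\subset(Y_0,\xi_0)$ with the core of the new solid torus bounding an overtwisted disk. Then $c(\xi)=F_{-W}(c(\xi_0))$, and for this particular $L$ one checks in the Heegaard triple that $\bm c$ becomes null-homologous in the relevant $\spinc$-summand---equivalently, the trace of the stabilization is visible as a destabilization of the diagram---so $c(\xi)=0$.

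For the second bullet, if $\Mxi$ is Stein fillable it is obtained from $(S^3,\xi_{\mathrm{std}})$ by a sequence of Legendrian surgeries on a link; the analogue of the surgery formula for contact $(-1)$-surgery (again a small-triangle count) shows each such surgery sends the contact class to the contact class, and since $c(\xi_{\mathrm{std}})\neq0$ by a direct computation in the genus-one Heegaard diagram of $-S^3$, non-vanishing propagates up the tower. For a strongly fillable $\Mxi$, I would instead cap off the symplectic filling to a closed symplectic $4$-manifold (Lisca--Mati\'c, Eliashberg--Etnyre), remove a ball to obtain a cobordism from $\Mxi$, and apply the $\spinc$-refined adjunction inequality of Theorem~\ref{hfadjunction} to see that this cobordism map is nonzero on the summand carrying $c(\xi)$, whence $c(\xi)\neq0$.
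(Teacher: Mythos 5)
This statement is quoted background: the paper does not prove it, but cites it directly from Ozsv\'ath--Szab\'o (with the contact class described via the Honda--Kazez--Mati\'c generator), so there is no internal proof to compare against and you are in effect re-deriving literature results. With that caveat, most of your sketch follows the standard arguments: the $(+1)$-surgery naturality via an open book with $L$ on a page, Proposition~\ref{prop:plusminus one surgery}, and a small-triangle count in the Honda--Kazez--Mati\'c triple diagram is essentially how naturality is proved in the literature; the Stein-fillable case via a Legendrian-surgery presentation and naturality is also the standard route (though the presentation is over a connected sum of copies of $S^1\times S^2$ when the Stein domain has $1$-handles, not over $S^3$ alone, so the base computation is for the standard tight structure on $\#_k(S^1\times S^2)$). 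The overtwisted case is the right idea but glosses over the key input that \emph{every} overtwisted contact structure is supported by a negatively stabilised open book; this requires Eliashberg's classification of overtwisted contact structures (or a Lutz-twist argument) to match homotopy classes, not just Giroux stabilisation, and once one has a negative stabilisation the vanishing of the contact generator is a direct computation rather than an application of the surgery formula.

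The genuine gap is in your strong fillability argument. Theorem~\ref{hfadjunction} is a \emph{vanishing} criterion: a surface of large square forces $F_W=0$; no refinement of it can certify that a cobordism map is nonzero on the summand carrying $c(\xi)$, so the step ``apply the adjunction inequality to see that this cobordism map is nonzero'' does not work as stated. The correct argument caps off the strong filling to a closed symplectic $4$-manifold $X$ with $b_2^+(X)>1$ (Eliashberg, Etnyre; Lisca--Mati\'c in the Stein case) and then invokes the \emph{nonvanishing} theorem for the closed Ozsv\'ath--Szab\'o $4$-manifold invariant in the canonical $\Spinc$ structure of a symplectic $4$-manifold, together with the fact that this invariant factors through the cobordism map applied to $c(\xi)$ (equivalently, $c(\xi)$ is identified with a relative invariant of the concave piece); nonvanishing of the composite then forces $c(\xi)\neq 0$. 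Without that nonvanishing theorem for closed symplectic $4$-manifolds--which is a substantial input entirely absent from your sketch--the strong (and hence weak-to-strong) fillability bullet is unproved.
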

% !!!!!!!!!!!!!!!!!!!!!!!!!

Given an open book $\Sigphi$ supporting $\Mxi$, and a component of $\bd\Sigma$, we can create a new open book $(\Sigma',\phi')$ supporting $(M', \xi')$ by \textit{capping off} the binding component, \textit{ie.\@} by letting $\Sigma'$ be $\Sigma$ union a disc glued along the binding component, and where $\phi'$ is an extension of $\phi$ by the identity over the disc (see \cite{Baldwin:cappingoff}).  If $(\Sigma,\phi \circ D^{-1}_\bd)$ supports $(M'',\xi'')$, where $D^{-1}_\bd$ is a negative Dehn twist around the boundary component that was capped off, then Ozsváth and Szabó have proved the following.

% !!!!!!!!!!!!!!!!!!!!!!!!!
\begin{thm}[Ozsváth--Szabó \cite{OS:hf2}]
\label{thm:exact triangle}
The following sequence is exact.
\begin{center}
\begin{tikzpicture}[node distance=2cm,auto]
  \node (L) {$\hf(-M')$};
  \node (R) [right of=L, node distance=4cm] {$\hf(-M)$};
  \node (B) [below of=L, right of=L, node distance=2cm] {$\hf(-M'').$};
  \draw[->] (L) to (R);
  \draw[->] (R) to (B);
  \draw[->] (B) to (L);
\end{tikzpicture}
\end{center}
\end{thm}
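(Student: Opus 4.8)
The plan is to recognise the three manifolds $M'$, $M$, $M''$ as the three Dehn fillings of a single knot exterior along a triple of pairwise distance-one slopes, and then to quote the Ozsv\'ath--Szab\'o surgery exact triangle from \cite{OS:hf2}.

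First I would set up the knot exterior. Let $B\subset M$ be the binding component being capped off, let $X=M\setminus N(B)$ be its exterior, and equip $\bd X$ with the basis consisting of the meridian $\mu$ of $B$ and the page-slope longitude $\lambda$ (the curve cut on $\bd N(B)$ by a page), normalised so that $\mu\cdot\lambda=1$; then $M$ is the filling of $X$ along $\mu$. Now I would identify the other two fillings. Unwinding the definition of capping off --- attach a disc to $\Sigma$ along $B$ and extend $\phi$ by the identity over it --- shows that in $M'$ the page curve $\lambda$, rather than $\mu$, bounds a meridian disc of the glued-in solid torus; hence $M'$ is the filling of $X$ along $\lambda$. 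For $M''$, a curve $K$ lying on a page parallel to $B$ is isotopic in $M$ to $B$ and has page framing $\lambda$, and Proposition~\ref{prop:plusminus one surgery} identifies $(\Sigma,\phi\circ D^{-1}_\bd)$ with $(+1)$-surgery on $K$ relative to its page framing; thus $M''$ is the filling of $X$ along $\mu+\lambda$.

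Next I would observe that the three filling slopes $\lambda$, $\mu$, $\mu+\lambda$ intersect each other exactly once on $\bd X$. Re-coordinatising $\bd X$ so that $\lambda$ plays the role of meridian and $\mu$ that of the $0$-framing, the slope $\lambda$ becomes $\infty$, the slope $\mu$ becomes $0$, and the slope $\mu+\lambda$ becomes $1$; so $(M',M,M'')$ is exactly the triple $(X_\infty,X_0,X_1)$ to which the Ozsv\'ath--Szab\'o surgery exact triangle applies. Applying that theorem not to $X$ but to $-X$ --- for which the very same three curves yield, in order, $-M'$, $-M$, $-M''$, since Dehn filling along a fixed slope commutes with reversal of the ambient orientation --- produces the asserted exact sequence
$$\cdots\longrightarrow\hf(-M')\longrightarrow\hf(-M)\longrightarrow\hf(-M'')\longrightarrow\hf(-M')\longrightarrow\cdots.$$

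I expect the genuine content of the argument to sit entirely in the first two paragraphs: correctly seeing that capping off $B$ is the page-slope filling of $X$ (the one piece of open-book bookkeeping) and checking the distance-one condition on the triple. Once the triple is in hand, the Heegaard Floer input is simply \cite{OS:hf2}, and the passage to $-X$ is formal. An alternative route would be to invoke Baldwin's analysis of capping off \cite{Baldwin:cappingoff} directly, where this exact triangle is already recorded.
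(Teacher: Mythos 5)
The paper does not actually prove Theorem~\ref{thm:exact triangle}; it is quoted from Ozsváth--Szabó \cite{OS:hf2}, with the capping-off interpretation as in \cite{Baldwin:cappingoff}, so there is no internal argument to compare against. Your proposal reconstructs the standard derivation and is correct in substance: capping off is Dehn filling of the binding exterior $X$ along the page slope $\lambda$, adding $D^{-1}_\bd$ is page-framed $(+1)$-surgery by Proposition~\ref{prop:plusminus one surgery}, so $M'$, $M$, $M''$ are the fillings of $X$ along the pairwise distance-one triple $\lambda$, $\mu$, $\mu+\lambda$, and the surgery exact triangle of \cite{OS:hf2} applies; as you say, the genuine content is this open-book bookkeeping. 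The one step you should not describe as purely formal is the passage to $-X$: reversing the ambient orientation negates the intersection pairing on $\bd X$, so the triple that reads $(\infty,0,1)$ in your re-coordinatisation of $X$ reads $(\infty,0,-1)$ from the point of view of $-X$, and while any distance-one triple fits into a surgery triangle, the cyclic order in which it is exact is tied to exactly this sign. A clean way to nail it down is to apply the framed-knot form of the triangle to the core of the filling solid torus in $-M$ with framing $\mu+\lambda$: with the meridian oriented compatibly with $-M$ the third slope becomes $(\mu+\lambda)-\mu=\lambda$, so the triangle runs $\hf(-M)\to\hf(-M'')\to\hf(-M')\to\cdots$, which is precisely the cyclic order asserted in the statement. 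With that orientation bookkeeping made explicit your argument is complete, and your alternative of simply invoking \cite{Baldwin:cappingoff}, where this triangle is recorded in exactly this capping-off context, is the same shortcut the paper takes.
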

% !!!!!!!!!!!!!!!!!!!!!!!!!

%%%%%%%%%%%%%%%%%%%%%%%%%
\subsection{Heegaard Floer contact invariant of surgeries}
%%%%%%%%%%%%%%%%%%%%%%%%%

Hedden and Plamenevskaya \cite{HP} track the non-vanishing of the Heegaard Floer contact invariant after surgery on the binding component of an open book.  In light of Theorem~\ref{OBDThm}, we can restate their result in terms of inadmissible transverse surgery.

% !!!!!!!!!!!!!!!!!!!!!!!!!
\begin{thm}[Hedden--Plamenevskaya \cite{HP}] \label{thm:HP} If $K$ is an integrally fibred knot in a closed 3-manifold $M$, and the contact structure $\xi$ supported by the open book with binding $K$ is such that $c(\xi) \neq 0$, then $c(\xi_r) \neq 0$ for all $r \geq 2g$, where $\xi_r$ is the contact structure on $M_r(K)$ coming from inadmissible transverse $r$-surgery on $K$. \end{thm}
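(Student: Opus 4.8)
The plan is to exhibit the surgered contact class $c(\xi_r)$ as the image of $c(\xi)$ under a surgery cobordism map, and then to use a surgery exact triangle together with the adjunction inequality to force that image to be nonzero once $r>2g-1$ (in particular for $r\ge 2g$).

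First I would realise $c(\xi_r)$ as a cobordism image. By Theorem~\ref{OBDThm} and Theorem~\ref{Transverse=Legendrian}, inadmissible transverse $r$-surgery on $K$ agrees with a positive contact surgery on a Legendrian approximation of $K$ realised on a (stabilised) page, which by the Ding--Geiges algorithm is a sequence of contact $(\pm1)$-surgeries on Legendrian knots supported in a neighbourhood of $K$. The cobordism map of a contact $(+1)$-surgery carries $c$ to $c$ by the property quoted above, and that of a contact $(-1)$-surgery (Legendrian surgery) preserves both $c$ and its non-vanishing; stacking the corresponding $2$-handles yields a cobordism $W\colon M\to M_r(K)$ with $F_{-W}(c(\xi))=\pm c(\xi_r)$. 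It then suffices to show that $F_{-W}$ does not annihilate $c(\xi)$, for which it is enough that $F_{-W}$ be injective.

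Next I would place $W$ into an exact triangle and kill its neighbour. One can use the integral surgery triangle for the Farey triple $\{\infty,\,r-1,\,r\}$ (with $M=M_\infty$), or the capping-off triangle of Theorem~\ref{thm:exact triangle} applied to the open book of the surgered manifold, in which capping off the surgered binding component erases the boundary-parallel Dehn twists and leaves a single fibred manifold whose fibre is the closed genus-$g$ surface $\widehat\Sigma$ obtained by capping a page with a disc. In either model, exactly one of the three cobordisms in the triangle contains $\widehat\Sigma$ as a homologically essential surface, and its self-intersection is the pertinent framing, which equals $r$ up to a fixed shift of one coming from the page-versus-meridian conventions. For $r>2g-1$ this exceeds $2g(\widehat\Sigma)-2$ (in the appropriate orientation), so Theorem~\ref{hfadjunction} forces the corresponding cobordism map to vanish; since that cobordism is the one neighbouring $F_{-W}$ in the triangle, exactness makes $F_{-W}$ injective, and therefore $c(\xi_r)=F_{-W}(c(\xi))\ne 0$.

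The hard part will be the bookkeeping in the last step: matching the multi-handle cobordism produced by the contact-surgery decomposition with the single-handle, spin$^c$-summed cobordism appearing in the exact triangle (so that the contact class is genuinely transported through it), fixing the orientations and the refined adjunction data so that it is the \emph{correct} map that is annihilated, and computing the framing shift precisely enough to pin the threshold at $r\ge 2g$ rather than something weaker. An alternative route that avoids the triangle is the large surgery formula: for $r\ge 2g$ the spin$^c$ summand of $\hf(-M_r(K))$ carrying $c(\xi_r)$ is a subquotient of the knot Floer complex of $K$ that computes $\HFK(-M,K,g)\cong\Z$ (as $K$ is fibred), so $c(\xi_r)$ is identified with a generator of that group and is nonzero; this makes the genus bound transparent but draws on more of the knot Floer package than is developed here.
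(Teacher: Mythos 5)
The first thing to say is that this paper does not prove Theorem~\ref{thm:HP}: it is quoted from Hedden--Plamenevskaya \cite{HP} (and is used as an input, as the base case in the proof of Lemma~\ref{lem:model OBD HF}), so there is no in-paper proof to match your proposal against. The closest in-paper analogues of your main route are the remark at the end of Lemma~\ref{lem:model OBD HF} and the triangle-plus-adjunction argument of Lemma~\ref{lem:cobordism vanishing}; the actual proof in \cite{HP} is essentially your ``alternative route'' via the large-surgery identification of the relevant $\Spinc$ summand of $\hf(-M_r(K))$ with knot Floer homology.

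Judged on its own terms, your main route has a genuine gap at its first step, and it is not the bookkeeping you flag. You claim that stacking the handles of the Ding--Geiges decomposition yields a cobordism $W\colon M \to M_r(K)$ with $F_{-W}(c(\xi)) = \pm c(\xi_r)$. The functoriality you quote only goes one way: contact $(+1)$-surgery cobordisms carry the old contact class to the new one, but for a Legendrian ($-1$) surgery $W\colon (M,\xi)\to(N,\xi')$ the known statement is the reverse transport, $c(\xi')\mapsto c(\xi)$ under $\hf(-N)\to\hf(-M)$ (this is how negative contact surgery preserves non-vanishing), and there is no statement that the forward map sends $c(\xi)$ to $c(\xi')$. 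For a general fibred $K\subset\Mxi$ the decomposition of inadmissible transverse $2g$-surgery necessarily contains such $(-1)$-handles: it is a single contact $(+1)$-surgery only if $K$ admits a Legendrian approximation with $tb=2g-1$, which is exactly the kind of extra hypothesis imposed in Theorem~\ref{thm:non-fibred} and Corollary~\ref{cor:contact width} and is not available in general. Moreover the multi-handle cobordism is not the single-handle cobordism appearing in your surgery triangle, so even a correct adjunction computation (your threshold bookkeeping is fine: the neighbouring cobordism contains a closed genus-$g$ surface of square $r-1\geq 2g-1 > 2g-2$) only gives injectivity of a map that you have not shown carries $c(\xi)$ to $c(\xi_r)$. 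That identification is the contact-geometric heart of the theorem; it is precisely why the paper proves its stronger Theorem~\ref{thm:preserve tightness} only for model open books with trivial monodromy -- where Etnyre--Van Horn-Morris \cite{EVHM:fibered} supplies the $tb=2g-1$ Legendrian of Figure~\ref{fig:legendrian}, so the relevant surgery really is a single contact $(+1)$-surgery -- and then upgrades to arbitrary monodromy via the monoid Theorem~\ref{thm:monoid} rather than by transporting $c(\xi)$ through a cobordism. Your one-sentence fallback via the large surgery formula is indeed how \cite{HP} argue, but identifying $c(\xi_r)$ with the generator of $\HFK(-M,K,g)$ under that isomorphism is the substantive step, not a remark, so as written neither route constitutes a proof.
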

% !!!!!!!!!!!!!!!!!!!!!!!!!

We will extend that result to prove that $c(\xi_r) \neq 0$ for all $r > 2g-1$, and then use this to prove Theorem~\ref{thm:preserve tightness}.  To do this, we will use our open books for inadmissible surgery created in Section~\ref{sec:OBD} along with the following theorem.  The line about tightness follows from a recent paper of Wand \cite{Wand}.

% !!!!!!!!!!!!!!!!!!!!!!!!!
\begin{thm}[Baker--Etnyre--van Horn-Morris \cite{BEVHM}, Baldwin \cite{Baldwin:monoid}]\label{thm:monoid} Fix a surface $\Sigma$ with boundary.  The set of monodromies $\phi \in \rm{Diff}^+(\Sigma,\bd\Sigma)$ defining contact structures with a fixed property from the following list form a monoid in $\rm{Diff}^+(\Sigma,\bd\Sigma)$:
\begin{itemize}
\item tight,
\item non-vanishing Heegaard Floer invariant,
\item weakly fillable,
\item strongly fillable,
\item Stein fillable.
\end{itemize}
\end{thm}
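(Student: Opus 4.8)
The plan is to check the two monoid axioms: that the identity diffeomorphism lies in each of the five families, and that each family is closed under composition. The first is immediate --- the open book $(\Sigma,\mathrm{id})$ supports the standard tight contact structure on $\#^{2g(\Sigma)+|\partial\Sigma|-1}(S^1\times S^2)$, which is the boundary of $\natural(S^1\times B^3)$ with its standard Stein structure, so it is Stein, strongly, and weakly fillable, it is tight, and being Stein fillable it has non-vanishing contact invariant. Hence $\mathrm{id}$ belongs to all five sets, and everything reduces to closure under composition.

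For closure I would aim to prove one geometric statement that handles all five properties at once: \emph{for arbitrary monodromies $\phi_1,\phi_2$ of $\Sigma$, the contact manifold $(M_{\phi_1\phi_2},\xi_{\phi_1\phi_2})$ supported by $(\Sigma,\phi_1\phi_2)$ is obtained from the disjoint union $(M_{\phi_1},\xi_{\phi_1})\sqcup(M_{\phi_2},\xi_{\phi_2})$ by one contact connected sum followed by a finite sequence of contact $(-1)$-surgeries on Legendrian knots.} Granting this, each bullet follows from known stability of these two elementary moves: contact connected sum preserves tightness (a theorem of Colin), each flavour of fillability (a Weinstein $1$-handle, after a routine deformation of the symplectic forms near the boundary in the weak case), and non-vanishing of $c(\xi)$, since $\hf$ and the contact class are multiplicative under connected sum; while a contact $(-1)$-surgery preserves all three flavours of fillability (Eliashberg \cite{Eliashberg:stein}, Weinstein \cite{Weinstein}, Etnyre--Honda \cite{EH:non-fillable}), preserves non-vanishing of $c(\xi)$ --- because the reverse cobordism is induced by a contact $(+1)$-surgery, so by the properties of $c(\xi)$ recalled above its map carries the old contact class onto the new one --- and preserves tightness by Wand \cite{Wand}. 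Composing these observations along the connected sum and the surgeries in the decomposition yields the desired property for $\xi_{\phi_1\phi_2}$.

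To build the decomposition I would route through the \emph{doubled open book} $(\Sigma\,\natural\,\Sigma,\ \phi_1\sqcup\phi_2)$, whose monodromy acts as $\phi_1$ on one copy of $\Sigma$, as $\phi_2$ on the other, and trivially on the connecting band; this supports exactly $\xi_{\phi_1}\#\xi_{\phi_2}$ and differs from the disjoint union by a single $1$-handle. One then selects positively twisted page curves $\gamma_1,\dots,\gamma_k$ supported in the extra bands so that $(\Sigma\,\natural\,\Sigma,\ (\phi_1\sqcup\phi_2)\circ D_{\gamma_1}\circ\cdots\circ D_{\gamma_k})$ is a positive stabilisation of $(\Sigma,\phi_1\phi_2)$; by Proposition~\ref{prop:plusminus one surgery} each $D_{\gamma_i}$ is a contact $(-1)$-surgery on the Legendrian realisation of $\gamma_i$. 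The essential point, and the reason for passing through the double, is that this procedure uses $\phi_1$ and $\phi_2$ only as black boxes and never asks either to be a product of positive Dehn twists --- which matters, because a tight, even Stein-fillable, monodromy need not factor that way on its given page. (For the Stein bullet alone there is a quicker variant: stabilise $(\Sigma,\phi_1)$ and $(\Sigma,\phi_2)$ along disjointly attached bands so that each monodromy becomes a product of positive Dehn twists, and note that disjointly supported twists commute, so a common stabilisation of $(\Sigma,\phi_1\phi_2)$ has monodromy a product of positive Dehn twists and is therefore Stein fillable.)

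The hard part is carrying out that middle step concretely: producing the curves $\gamma_i$, checking that the resulting open book really is the claimed stabilisation of $(\Sigma,\phi_1\phi_2)$, and --- for the strong and weak cases --- verifying that the fillings glued along the Weinstein cobordism patch together as symplectic manifolds near the interface. This combinatorial-and-geometric core is what Baker--Etnyre--van Horn-Morris \cite{BEVHM} supply for the fillability and contact-invariant bullets and Wand \cite{Wand} for the tightness bullet. As an independent route to the two Heegaard Floer bullets I would, in parallel, set up Baldwin's comultiplicativity map \cite{Baldwin:monoid}: from the Honda--Kazez--Mati\'c diagrams of $(\Sigma,\phi_1)$, $(\Sigma,\phi_2)$, and $(\Sigma,\phi_1\phi_2)$ one builds a holomorphic polygon map carrying $c(\xi_{\phi_1})\otimes c(\xi_{\phi_2})$ to $c(\xi_{\phi_1\phi_2})$ in a way that detects non-vanishing, which is exactly the monoid statement for that property.
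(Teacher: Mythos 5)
The paper does not prove this theorem: it is stated as quoted background, attributed to \cite{BEVHM} and \cite{Baldwin:monoid} with the tightness bullet deduced from \cite{Wand}, and your outline correctly reconstructs exactly the strategy of those sources --- the Weinstein cobordism from $(M_{\phi_1},\xi_{\phi_1})\sqcup(M_{\phi_2},\xi_{\phi_2})$ to $(M_{\phi_1\phi_2},\xi_{\phi_1\phi_2})$ built from one $1$-handle plus contact $(-1)$-surgeries, the stability of each property under connected sum and Legendrian surgery (\cite{Eliashberg:stein,Weinstein,EH:non-fillable}, Wand for tightness), and Baldwin's comultiplication for the Heegaard Floer bullet --- so there is nothing in the paper to diverge from. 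Two small corrections: in your parenthetical Stein shortcut the justification ``disjointly supported twists commute'' is not right, since the stabilising curves for $\phi_1$ run through $\Sigma$ and their twists do not commute with $\phi_2$; the repair is the conjugation identity $D_{c}\,\phi_2=\phi_2\,D_{\phi_2^{-1}(c)}$, which rewrites $(\phi_1 D_{c_1})(\phi_2 D_{c_2})=\phi_1\phi_2\,D_{\phi_2^{-1}(c_1)}D_{c_2}$ as a positively factored stabilisation of $\phi_1\phi_2$; and the $(+1)$-surgery cobordism map carries the contact class of the Legendrian-surgered manifold onto that of the original (not ``old onto new''), which is the direction needed to conclude non-vanishing is preserved.
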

% !!!!!!!!!!!!!!!!!!!!!!!!!

The proof of Theorem~\ref{thm:preserve tightness} will be as follows: we will consider model open books relevant to $r$-surgery on the connected binding of a genus $g$ open book; we will show that for $r > 2g-1$, these open books have non-vanishing Heegaard Floer contact invariant, and in particular, are tight; finally, Theorem~\ref{thm:monoid} will allow us to prove the result for a generic monodromy that supports a tight contact structure.  The proof of the non-vanishing of the Heegaard Floer contact invariants of the model open books is based on proofs of Lisca and Stipsicz \cite{LS:hf1}.

\begin{figure}[htbp]
\begin{center}
\begin{overpic}[scale=1.5,tics=15]{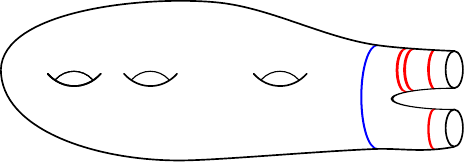}
\put(245,60){\LARGE $-$}
\put(143,57){\LARGE $\cdots$}
\put(122,81){$g$}
\put(26,58){\rotatebox{90}{$\left.\begin{array}{c} \vspace{6cm} \\ ~ \end{array}\right\}$}}
\put(293,95){$k-1$}
\put(290,71){\rotatebox{85}{$\left.\begin{array}{c} \vspace{0.3cm} \end{array}\right\}$}}
\put(295,66){\large $\cdots$}
\put(339,66){$K$}
\end{overpic}
\caption{For an integer $k \geq 1$, this is the open book supporting inadmissible transverse $k$-surgery on the binding $K$ of the genus $g$ open book with connected binding and trivial monodromy.  The unmarked red curves are positive Dehn twists and the blue curve marked with a negative sign is a negative Dehn twist.  The upper boundary component represents the dual knot to $K$, \textit{ie.\@} the core of the surgery torus.  We also call this knot $K$.}
\label{fig:sigmagn after negative Dehn twist and destab}
\end{center}
\end{figure}

Let $(M_g,\xi_g)$ be the contact manifold supported by the open book $(\Sigma_g^1,\rm{id}_{\Sigma_g^1})$, where the page has genus $g$ and a single boundary component.  The manifold $M_g$ is a connect sum of $2g$ copies of $S^1 \times S^2$, and $\xi_g$ is the unique tight contact structure on $M_g$.  Note that $c(\xi_g) \neq 0 \in \hf(-M_g)$.  Let the binding of this open book be $K$.

Consider the open book in Figure~\ref{fig:sigmagn after negative Dehn twist and destab} (with $k = 2g$) for inadmissible transverse $2g$-surgery on $K$.  From Theorem~\ref{thm:HP}, we can conclude that these support tight contact structures with non-vanishing Heegaard Floer invariant.  We will use these open books, and the Heegaard Floer exact triangle, to show that the following model open books $(\Sigma_{g,n}, \phi_{g,n})$ in Figure~\ref{fig:sigmagn} have non-vanishing Heegaard Floer invariant.

% !!!!!!!!!!!!!!!!!!!!!!!!!
\begin{lem}\label{lem:OBD is right surgery} The open book $(\Sigma_{g,n},\phi_{g,n})$ in Figure~\ref{fig:sigmagn} supports the contact structure coming from inadmissible transverse $(2g-1+1/n)$-surgery on the binding of the connected binding genus $g$ open book with trivial monodromy. \end{lem}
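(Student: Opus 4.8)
The plan is to use the construction from Section~\ref{sec:inadmissible OBD} to identify $(\Sigma_{g,n}, \phi_{g,n})$ concretely, and then verify that it is exactly the open book produced by our algorithm for inadmissible transverse $(2g-1+1/n)$-surgery on the binding $K$ of $(\Sigma_g^1, \mathrm{id})$. Since the paper already establishes (Theorem~\ref{thm:OBD connected binding}) that $1/n$-surgery on a binding component corresponds to adding $D_K^{-n}$ to the monodromy and realizes inadmissible transverse $1/n$-surgery, and since Proposition~\ref{prop:admissible OBD} (together with Theorem~\ref{BEV OBD Thm}) handles the subsequent admissible surgery, the whole statement reduces to a bookkeeping computation: decompose $2g-1+1/n$ into an inadmissible $1/n$ piece followed by an admissible $r'$-surgery, compute $r'$ via the formula $r' = p/(q - np)$ from Section~\ref{sec:inadmissible OBD}, write $r'$ as a negative continued fraction, and check that the sequence of stabilizations and boundary-parallel Dehn twists prescribed by Proposition~\ref{prop:admissible OBD} produces precisely the picture in Figure~\ref{fig:sigmagn}.

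The concrete steps I would carry out are as follows. First, since we want inadmissible transverse $r$-surgery with $r = 2g-1+1/n$ and $r$ is larger than $1$ (assuming $g \geq 1$), the least integer $m$ with $1/m \le r$ is $m = 1$; so the construction begins by adding $D_K^{-1}$ to the monodromy — one negative boundary-parallel Dehn twist — which matches the single blue curve marked with a minus sign in Figure~\ref{fig:sigmagn after negative Dehn twist and destab} and presumably in Figure~\ref{fig:sigmagn}. After this step we have performed inadmissible transverse $1$-surgery and must follow it by admissible transverse $r'$-surgery where, by the formula in Section~\ref{sec:inadmissible OBD} with $p/q = (2g-1) + 1 = 2g$... more carefully: writing $r = (2gn - n + 1)/n$, we get $r' = p/(q-np)$ with $p = 2gn-n+1$, $q = n$; this is negative and we expand it as a negative continued fraction $[a_1+1, a_2, \ldots, a_k]$. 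Second, I would apply Proposition~\ref{prop:admissible OBD} to this $r'$: stabilize positively $|a_i+2|$ times and add a positive Dehn twist about $K$, for each $i$ in order. Third, I would check that the resulting page is $\Sigma_{g,n}$ and the resulting monodromy is $\phi_{g,n}$ as drawn — in particular that the count of positive Dehn twists and the genus contributions of the stabilizations agree with the figure, keeping in mind the Remark after Proposition~\ref{prop:admissible OBD} (which says our construction and the Baker–Etnyre–van Horn-Morris construction agree for integral coefficients, so the genus-$g$ part coming from the trivial monodromy page simply persists).

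The main obstacle is the continued-fraction computation for $r'$ and verifying it matches the figure's combinatorics: the precise value of $r' = (2gn-n+1)/(n - n(2gn-n+1)) = (2gn-n+1)/(n(1-2gn+n-1)) = (2gn-n+1)/(n^2(1 - 2g) - \text{correction})$ needs to be simplified carefully, its negative continued fraction expansion $[a_1+1, a_2, \ldots]$ worked out, and then the recipe of Proposition~\ref{prop:admissible OBD} translated into the specific plumbing pattern shown in Figure~\ref{fig:sigmagn}. I expect that, after simplification, $r'$ has a short and regular continued-fraction expansion (likely of the form $[-2, -2, \ldots, -2]$ with a controlled number of terms, or $[-(2g), \ldots]$), which is why the figure is clean; confirming the exact shape is the one genuinely computational step. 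Everything else is a direct invocation of Theorem~\ref{thm:OBD connected binding}, Theorem~\ref{BEV OBD Thm}, Proposition~\ref{prop:admissible OBD}, and the fact (used implicitly throughout) that composing an inadmissible $1/m$-surgery with an admissible surgery on the dual knot realizes the total inadmissible surgery, as spelled out in Section~\ref{sec:inadmissible OBD}.
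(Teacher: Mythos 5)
Your overall strategy is the same as the paper's in substance: reduce the lemma to the machinery already in place (Theorem~\ref{thm:OBD connected binding}, Theorem~\ref{BEV OBD Thm}, Proposition~\ref{prop:admissible OBD}) and then verify, by Farey bookkeeping, that the curves in Figure~\ref{fig:sigmagn} realise the coefficient $2g-1+1/n$. The only difference is organisational: the paper tracks the Farey labels move by move through the sequence visible in the figure (one negative boundary-parallel Dehn twist, one stabilisation, $2g-2$ positive boundary-parallel Dehn twists bringing the meridian to $2g-1$, then $n-1$ stabilisations and a final positive Dehn twist, using the identity $n\cdot(2g-1)\oplus 2g = 2g-1+\tfrac{1}{n+1}$), whereas you propose to run the general recipe of Section~\ref{sec:inadmissible OBD} by computing $r'$ and its negative continued fraction and then invoking Proposition~\ref{prop:admissible OBD}. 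These are two phrasings of the same verification.

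However, as written your proposal stops exactly where the content of the lemma lies, and the one formula you do write for $r'$ is wrong: since $2g-1+1/n \geq 1$, the integer in the decomposition is $m=1$ (as you correctly say earlier), so with $p=(2g-1)n+1$ and $q=n$ one gets $r' = p/(q-p) = -\frac{(2g-1)n+1}{(2g-2)n+1}$; your later expression substitutes the lemma's $n$ for this integer in $q-mp$ and does not simplify to this value. The negative continued fraction of $r'$ is $[-2,-2,\ldots,-2,-(n+1)]$ with $2g-2$ entries equal to $-2$ (for $g=1$ it is simply $[-(n+1)]$), not quite either of the shapes you guessed. Feeding this into Proposition~\ref{prop:admissible OBD} gives, after the initial $D_K^{-1}$: one stabilisation, then $2g-2$ positive Dehn twists about the (stabilised) binding, then $n-1$ further stabilisations and one final positive Dehn twist, which indeed reproduces Figure~\ref{fig:sigmagn} and agrees with the paper's step-by-step count. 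With that computation actually carried out your argument closes; without it, the proposal is a correct plan rather than a proof, since checking that the figure's combinatorics match the coefficient $2g-1+1/n$ is the entire assertion being proved.
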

% !!!!!!!!!!!!!!!!!!!!!!!!!
% PPPPPPPPPP
\begin{proof}
We start with the open book $(\Sigma_g^1, \rm{id}_{\Sigma_g^1})$, where $\Sigma_g^1$ is a once-punctured genus $g$ surface.  The binding $K$ has page slope $0$, as the page represents a Seifert surface for the binding.  After doing $1$-surgery with respect to the page slope, we have done inadmissible transverse $1$-surgery, and the in the Farey graph, West is labeled $1$, while East is still labeled $0$.  Stabilising $K$ once labels the Farey graph with $1$ on West, $\infty$ on East, and $+2$ on North.  If $g > 1$, then we do $-1/(2g-2)$-surgery with respect to the page slope, that is, we add $2g-2$ boundary parallel positive Dehn twists to $K$; West is now labeled $2g-1$, leaving $\infty$ at East, and $2g$ at North.  Note that
$$n\cdot\frac{2g-1}{1}\oplus\frac{2g}{1} = \frac{(2g-1)n+2g-1+1}{n+1} = 2g-1 + \frac{1}{n+1}.$$  Thus doing $n-1$ positive stabilisations brings the label $2g-1+1/n$ to North, and so adding a positive Dehn twist around $K$ will bring the label $2g-1+1/n$ to West.  Since West is the meridian slope, this open book is inadmissible transverse $(2g-1+1/n)$-surgery on $K$.
\end{proof}
% PPPPPPPPPP

\begin{figure}[htbp]
\begin{center}
\begin{overpic}[scale=1.5,tics=15]{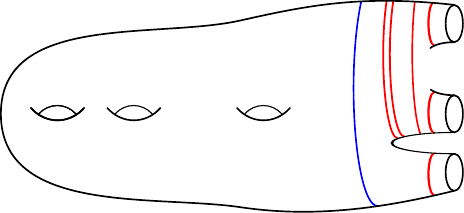}
\put(242,103){\LARGE $-$}
\put(135,70){\LARGE $\cdots$}
\put(329,102){$\left.\begin{array}{c} \vspace{2.45cm} \\ ~ \end{array}\right\} n$}
\put(116,94){$g$}
\put(20,71){\rotatebox{90}{$\left.\begin{array}{c} \vspace{6cm} \\ ~ \end{array}\right\}$}}
\put(279,165){$2g-2$}
\put(278,144){\rotatebox{85}{$\left.\begin{array}{c} \vspace{0.3cm} \end{array}\right\}$}}
\put(283.5,103){\large $\cdots$}
\put(333,153){$K$}
\put(318,100){\LARGE $\vdots$}
\end{overpic}
\caption{The open book $(\Sigma_{g,n},\phi_{g,n})$, where the unmarked red curves are positive Dehn twists and the blue curve marked with a negative sign is a negative Dehn twist.  The upper boundary component represents the original binding $K$.}
\label{fig:sigmagn}
\end{center}
\end{figure}

Let $(M_{g,n},\xi_{g,n})$ denote the contact manifold obtained from the open book $(\Sigma_{g,n},\phi_{g,n})$ in Lemma~\ref{lem:OBD is right surgery}.  Let $(M_{g,\infty},\xi_{g,\infty})$ denote the manifold obtained by adding a negative Dehn twist along $K$ in $(\Sigma_{g,n},\phi_{g,n})$.  Note that this destabilises, and hence is independent of $n$.  The supported manifold is inadmissible transverse $(2g-1)$-surgery on $K$.  See Figure~\ref{fig:sigmagn after negative Dehn twist and destab} for $k = 2g-1$.

By capping off the boundary component of $(\Sigma_{g,n+1},\phi_{g,n+1})$ immediately under $K$ in Figure~\ref{fig:sigmagn} when $n \geq 1$, Theorem~\ref{thm:exact triangle} gives us the exact sequence

\begin{center}
\begin{tikzpicture}[node distance=2cm,auto]
  \node (L) {$\hf(-M_{g,n})$};
  \node (R) [right of=L, node distance=4cm] {$\hf(-M_{g,n+1})$};
  \node (B) [below of=L, right of=L, node distance=2cm] {$\hf(-M_{g,\infty}).$};
  \draw[->] (L) to node {$F_{-X_n}$} (R);
  \draw[->] (R) to (B);
  \draw[->] (B) to node {$F_{-Y_n}$} (L);
\end{tikzpicture}
\end{center}

Here, $F_{-X_n}$ and $F_{-Y_n}$ are the maps induced by reversing the orientation on the $4$-manifold cobordisms $X_n$ and $Y_n$, which are between $M_{g,n}$ and $M_{g,n+1}$, and $M_{g,n+1}$ and $M_{g,\infty}$, respectively.

% !!!!!!!!!!!!!!!!!!!!!!!!!
\begin{lem}\label{lem:cobordism vanishing} $F_{-Y_n}:\hf(-M_{g,\infty}) \to \hf(-M_{g,n})$ is the $0$ map. \end{lem}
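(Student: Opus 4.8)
The plan is to identify the cobordism $Y_n$ explicitly and then apply the adjunction inequality of Theorem~\ref{hfadjunction} to the closed surface that the surgery description forces into $Y_n$. Recall that $M_{g,n+1}$ is inadmissible transverse $(2g-1+1/(n+1))$-surgery on $K$ (by Lemma~\ref{lem:OBD is right surgery}) and that $M_{g,\infty}$ is obtained from $(\Sigma_{g,n+1},\phi_{g,n+1})$ by adding one more negative Dehn twist parallel to $K$, which topologically is a $(-1)$-surgery on $K$ with respect to the page slope (Theorem~\ref{thm:OBD connected binding}). Since the cobordism in Theorem~\ref{thm:exact triangle} associated to capping off / the monodromy-change exact triangle is the standard $2$-handle cobordism, $Y_n$ is the $2$-handle cobordism from $M_{g,n+1}$ to $M_{g,\infty}$ realizing this surgery; reversing orientation, $-Y_n$ runs from $-M_{g,\infty}$ to $-M_{g,n}$.

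The first step is to compute the homology of $-Y_n$ and locate a closed surface. The $2$-handle is attached along (a push-off of) the binding $K$; the core of this handle, together with a Seifert surface for $K$ capped inside the surgery solid torus on one side, produces a closed surface $S$ in $Y_n$. Because the page of the genus-$g$ open book is a Seifert surface for $K$ of genus $g$, the surface $S$ has genus $g$. The second step is to compute the self-intersection $[S]\cdot[S]$: this is governed by the surgery framing relative to the Seifert framing. Here the page slope is the Seifert framing ($0$-slope), and tracking the Farey labels through Lemma~\ref{lem:OBD is right surgery} the relevant framing works out so that $[S]\cdot[S] \geq 2g-1 > 2g-2 = 2g(S)-2$; one should double-check that the framing is genuinely $\geq 2g-1$ (equivalently, that the surgery coefficient exceeding $2g-1$ is exactly what makes this inequality strict) and that $[S]$ is non-trivial in $H_2(Y_n)$, which holds because the surgery is along a rationally (here integrally) null-homologous knot and the coefficient is non-zero. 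With the adjunction inequality in hand, Theorem~\ref{hfadjunction} gives $F_{Y_n}=0$, hence $F_{-Y_n}=0$ by the same argument applied to $-Y_n$ (the genus and self-intersection of $S$ are unchanged, up to sign, under orientation reversal — and here one wants the version of the inequality that is symmetric under reversing orientation of the $4$-manifold, which the stated form is, since only $|[S]\cdot[S]|$ and $g(S)$ enter).

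The main obstacle I expect is the careful bookkeeping of the framing: one must translate the sequence of stabilisations and $\pm1$-surgeries in Figure~\ref{fig:sigmagn} into a precise statement of which $2$-handle cobordism $Y_n$ is and what the self-intersection of the capped-off core surface is, measured against the genus-$g$ page. The Farey-tessellation calculus of Section~\ref{sec:construction} (in particular the identity $n\cdot\frac{2g-1}{1}\oplus\frac{2g}{1} = 2g-1+\frac{1}{n+1}$ from the proof of Lemma~\ref{lem:OBD is right surgery}) is exactly the tool for this, but one has to be attentive to the orientation conventions ($\lambda\cdot\mu=1$, negative Dehn twist $\leftrightarrow$ positive surgery coefficient) so that the inequality comes out as $[S]\cdot[S]>2g(S)-2$ rather than the reverse. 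A secondary point to verify is that capping off the boundary component directly under $K$ in Figure~\ref{fig:sigmagn} really does produce the triangle with third term $M_{g,\infty}$ and that the connecting cobordism is the claimed $2$-handle attachment; this follows from Baldwin's description of capping off together with Theorem~\ref{thm:exact triangle}, but it is worth spelling out that the capped-off open book is $(\Sigma_{g,n},\phi_{g,n})$ and that $(\Sigma_{g,n},\phi_{g,n}\circ D_K^{-1})$ — the monodromy with the extra negative boundary-parallel twist — supports $M_{g,\infty}$, which is precisely the destabilised open book of Figure~\ref{fig:sigmagn after negative Dehn twist and destab}.
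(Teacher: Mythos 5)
There is a genuine gap, and it sits exactly where the real work of the paper's proof lies. First, your identification of the cobordism is off (and internally inconsistent: a cobordism from $M_{g,n+1}$ to $M_{g,\infty}$ cannot, after orientation reversal, run from $-M_{g,\infty}$ to $-M_{g,n}$). The map $F_{-Y_n}$ in the triangle is induced by reversing orientation on $Y_n : M_{g,\infty} \to M_{g,n}$, which is a $2$-handle attached not along (a push-off of) the original binding $K$ but along the surgery dual knot $K' \subset M_{g,\infty}$, with coefficient $-n$ relative to the page slope of the open book in Figure~\ref{fig:sigmagn after negative Dehn twist and destab}. Since $M_{g,\infty}$ is $(2g-1)$-surgery on $K$, the knot $K'$ is only \emph{rationally} null-homologous, of order $2g-1$: its rational Seifert surface meets $\bd N(K')$ in a $(2g-1,1)$-curve. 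So the closed surface you propose --- one genus-$g$ Seifert surface capped with a single core of the $2$-handle --- does not exist for $n \geq 1$. The paper instead passes to the $(2g-1,0)$-cable, producing a genus-$g$ surface $\Sigma'$ with $2g-1$ boundary components, glues on $2g-1$ parallel copies of the handle core, and resolves the resulting $n{2g-1\choose 2}$ double points, obtaining a closed surface of genus $g + n{2g-1\choose 2}$ with self-intersection $-n(2g-1)^2$ in $Y_n$. Your construction essentially reproduces the paper's argument for $F_{-Y_0}$ (the cobordism out of $M_g$, where $K$ genuinely is null-homologous with a genus-$g$ page), given at the end of the proof of Lemma~\ref{lem:model OBD HF}; it does not apply to $Y_n$ for $n \geq 1$.

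Second, your remark that the adjunction inequality ``is symmetric under reversing orientation of the $4$-manifold, since only $|[S]\cdot[S]|$ and $g(S)$ enter'' is false, and it is precisely the point where the sign bookkeeping cannot be waved away. Theorem~\ref{hfadjunction} requires $[S]\cdot[S] > 2g(S)-2$, and self-intersection changes sign under orientation reversal: in $Y_n$ the relevant surface has negative square and the hypothesis fails, while in $-Y_n$ the square becomes $+n(2g-1)^2$ and one checks
$$n(2g-1)^2 - \Bigl(2g + 2n{2g-1\choose 2} - 2\Bigr) = (2g-1)n - (2g-2) > 0$$
for all $g,n \geq 1$. This is why the lemma is a statement about $F_{-Y_n}$ rather than $F_{Y_n}$, and why the orientation reversal must be carried through the computation rather than absorbed into an absolute value. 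As written, your argument would ``prove'' vanishing for both orientations, which is a sign that the key inequality has not actually been verified for the surface that really lives in $-Y_n$.
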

% !!!!!!!!!!!!!!!!!!!!!!!!!
% PPPPPPPPPP
\begin{proof}
Recall that $M_{g,\infty}$ is the result of inadmissible tranvserse $2g-1$-surgery on the knot $K \subset M_g$ which is the binding of the open book $(\Sigma_g^1,\rm{id}_{\Sigma_g^1}).$  Let $K' \subset M_{g,\infty}$ denote the knot surgery dual to $K$, \textit{ie.\@} the core of the surgery torus.   Now, $Y_n$ is the cobordism from $M_{g,\infty}$ to $M_{g,n}$ given by some surgery on $K'$.

Since $M_{g,\infty}$ is topologically $(2g-1)$-surgery on $K$, we see that $K'$ is a rationally null-homologous knot with a rational Seifert surface $\Sigma$ of genus $g$.  In order to figure out what surgery on $K'$ will give us $M_{g,n}$, we look at the labeling of the Farey graph that results from doing inadmissible transverse $2g-1$-surgery on $K$ to get $M_{g,\infty}$.  By following the algorithm described in Section~\ref{sec:inadmissible OBD}, we see that after doing $2g-1$ surgery on the binding component $K$, West is labeled $2g-1$, East is labeled $\infty$, and the point that gets labeled $0$ is the point that on the standard labeling gets the label $1/(2g-1)$.  The point that is currently labeled $2g-1+1/n$ would get labeled $-n$ in the standard labeling.  Thus, $(-n)$-surgery on $K'$ with respect to the page slope will set the meridian to $2g-1+1/n$, which corresponds to inadmissible transverse $(2g-1+1/n)$-surgery on the original knot $K$, \textit{ie.\@} will give us $M_{g,n}$.

Since the label $0$ corresponds to the slope of the Seifert surface $\Sigma$ for $K$ in $M_g$, the rational Seifert surface for $K'$ in $M_{g,\infty}$ has slope $1/(2g-1)$ with respect to the page slope of the open book for $M_{g,\infty}$ in Figure~\ref{fig:sigmagn after negative Dehn twist and destab}, \textit{ie.\@} its intersection with the boundary $T$ of a neighourhood of $K'$ is a $(2g-1,1)$ curve, with respect to the $(\lambda,\mu)$ coordinates, where $\lambda$ comes from the page of the open book $(\Sigma_{g,\infty},\phi_{g,\infty}).$  Note that the open book in Figure~\ref{fig:sigmagn after negative Dehn twist and destab} is an integral open book, and so the page slope indeed defines a framing for $K'$.

Let $K'' \subset T$ be the $(2g-1,0)$ cable of $K'$ (with respect to the same framing), that is, a link of $2g-1$ copies of of the framing $\lambda$, which is homologically the same as the boundary of the rational Seifert surface $\Sigma$ minus a meridian.  To find a Seifert surface for $K''$, we take a meridional disc for $K'$ with boundary on $T$, and we think of the surface $\Sigma$ as also having boundary on $T$.  We create a new surface $\Sigma'$ by resolving all the intersections of the meridional disc and $\Sigma$ using negative bands.  The number of bands corresponds to the absolute value of the intersection number of $[\bd \Sigma]$ with $[\bd\Sigma']$.  Notice that since $\bd\Sigma' = (2g-1)[\lambda]$ (where $\lambda$ is isotopic to $K'$), the surface $\Sigma'$ has $2g-1$ boundary components.  Since we are adding one $0$-handle (the compressing disc) and $2g-1$, $1$-handles (the bands), we see that
$$\chi(\Sigma') = \chi(\Sigma) + 1 - (2g-1) = (1-2g)+1-2g+1 = 3-4g = 2 - 2g - (2g-1).$$ Thus $\Sigma'$ is a genus $g$ surface with $2g-1$ boundary components.

In the cobordism $Y_n$, we take a collar neighbourhood $M_{g,\infty} \times [0,1]$, and let $\Sigma'$ be a surface in the collar neighbourhood, with boundary in $M_{g,\infty} \times \{1\}$.  We build the cobordism $Y_n$ by attaching a $2$-handle to this collar neighbourhood.  Take $2g-1$ copies of the core of the $2$-handle with boundary on $M_{g,\infty} \times \{1\}$, and attach them to the $2g-1$ copies of $K'$ in the boundary of $\Sigma'$.  We will glue these to get a closed genus $g$ surface $\Sigma''$ with transverse double points.  We claim that there are $n\cdot{2g-1 \choose 2}$ double points.  Indeed, note that each core intersects each other core $n$ times (actually $-n$, but only the geometric intersection number is relevant here), and that there are ${2g-1\choose 2}$ pairs of cores to consider.  Thus, resolving these double points gives a surface $\widetilde{\Sigma}''$ of genus $g + n\cdot{2g-1\choose2}$.

To calculate the self-intersection of $\widetilde{\Sigma}''$, we note that since resolving the double points is a homologically trivial operation, and self-intersection depends only on homology class, we can calculate the self-intersection of $\Sigma''$ before resolving the double points.  We take a pushoff of $\Sigma''$ to calculate the self-intersection.  A push-off of each copy of the core of the $2$-handle intersects each of the $2g-1$ copies of the core of the $2$-handle in $\Sigma''$ a total of $-n$ times.  There are $(2g-1)^2$ pairs of cores to consider, so $$[\widetilde{\Sigma}'']\cdot[\widetilde{\Sigma}''] = [\Sigma'']\cdot[\Sigma''] = -n\cdot(2g-1)^2.$$

This is the self-intersection in $Y_n$, so in $-Y_n$, the self-intersection of $\widetilde{\Sigma}''$ is $n\cdot(2g-1)^2$.  Then we calculate in $-Y_n$:
$$[\widetilde{\Sigma}'']\cdot[\widetilde{\Sigma}'']-(2g(\widetilde{\Sigma}'')-2) = \left[n\cdot(2g-1)^2\right]-\left[2g+2n\cdot{2g-1\choose2}-2\right] = (2g-1)n-(2g-2).$$  This is $1$ for $n = 1$, and is increasing in $n$ for $g \geq 1$.  Thus, this is positive for all $g,n \geq 1$.

Since $-Y_n$ contains a surface $\widetilde{\Sigma''}$ with self-intersection greater than $2g(\widetilde{\Sigma''})-2$, the adjunction inequality of Theorem~\ref{hfadjunction} implies that $F_{-Y_n}$ is the $0$ map.
\end{proof}
% PPPPPPPPPP

% !!!!!!!!!!!!!!!!!!!!!!!!!
\begin{lem}\label{lem:model OBD HF} The contact invariant $c(\xi_{g,n}) \neq 0$ for all $g,n \geq 1$. \end{lem}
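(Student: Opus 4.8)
The plan is to induct on $n$ using the exact triangle displayed just before Lemma~\ref{lem:cobordism vanishing}, which reads
\begin{center}
\begin{tikzpicture}[node distance=2cm,auto]
  \node (L) {$\hf(-M_{g,n})$};
  \node (R) [right of=L, node distance=4cm] {$\hf(-M_{g,n+1})$};
  \node (B) [below of=L, right of=L, node distance=2cm] {$\hf(-M_{g,\infty}).$};
  \draw[->] (L) to node {$F_{-X_n}$} (R);
  \draw[->] (R) to (B);
  \draw[->] (B) to node {$F_{-Y_n}$} (L);
\end{tikzpicture}
\end{center}
For the base case $n=1$, note that $(\Sigma_{g,1},\phi_{g,1})$ supports inadmissible transverse $(2g-1+1)=2g$-surgery on $K$ by Lemma~\ref{lem:OBD is right surgery}, and $c(\xi_g)\neq 0$ since $\xi_g$ is Stein fillable (it is the unique tight structure on $\#^{2g}(S^1\times S^2)$); so Theorem~\ref{thm:HP} applies with $r=2g$ to give $c(\xi_{g,1})\neq 0$.

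For the inductive step, suppose $c(\xi_{g,n})\neq 0$. Lemma~\ref{lem:cobordism vanishing} tells us that $F_{-Y_n}\colon \hf(-M_{g,\infty})\to\hf(-M_{g,n})$ is the zero map, so exactness forces $F_{-X_n}\colon \hf(-M_{g,n})\to\hf(-M_{g,n+1})$ to be injective. The remaining point is the naturality of the contact invariant under this cobordism map: the cobordism $X_n$ (with reversed orientation) should be exactly the one induced by the capping-off construction relating $(\Sigma_{g,n},\phi_{g,n})$ to $(\Sigma_{g,n+1},\phi_{g,n+1})$, and by the capping-off naturality results of Baldwin \cite{Baldwin:cappingoff} (together with the cobordism-map property of $c(\xi)$ from Theorem~\ref{thm:exact triangle}'s setup), one has $F_{-X_n}(c(\xi_{g,n})) = c(\xi_{g,n+1})$, up to sign. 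Since $F_{-X_n}$ is injective and $c(\xi_{g,n})\neq 0$, we conclude $c(\xi_{g,n+1})\neq 0$, completing the induction.

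The main obstacle I anticipate is precisely this last naturality claim: one must verify that capping off the boundary component of $(\Sigma_{g,n+1},\phi_{g,n+1})$ sitting directly under $K$ recovers $(\Sigma_{g,n},\phi_{g,n})$ on the nose (i.e.\ that the monodromies match after the capping operation), and that the associated cobordism — when given the correct orientation — induces the map $F_{-X_n}$ appearing in the triangle, so that the contact class is carried along. This is a bookkeeping check on Figure~\ref{fig:sigmagn}: capping the indicated boundary component removes one of the $n$ parallel positive Dehn twists while leaving the rest of the picture (the genus-$g$ part and the single negative Dehn twist about $K$) intact, which is exactly the difference between $(\Sigma_{g,n+1},\phi_{g,n+1})$ and $(\Sigma_{g,n},\phi_{g,n})$. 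Once this identification is in hand, the argument is purely formal. An alternative that sidesteps some of this is to observe that, since $F_{-Y_n}=0$ and the triangle is exact, $F_{-X_n}$ is injective for \emph{every} $n\geq 1$, and then it suffices to know that $c(\xi_{g,n})$ maps to $c(\xi_{g,n+1})$ under the cobordism map associated to the open-book capping; the non-vanishing then propagates from the base case $n=1$ up the tower.
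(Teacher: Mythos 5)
Your skeleton is the same as the paper's: induct on $n$, get injectivity of $F_{-X_n}$ from Lemma~\ref{lem:cobordism vanishing} together with exactness of the triangle, and seed the induction with Theorem~\ref{thm:HP} applied at $r=2g$. The gap is in the step you yourself flag as the main obstacle, and your proposed resolution does not close it. The claim $F_{-X_n}(c(\xi_{g,n})) = c(\xi_{g,n+1})$ does not follow directly from \cite{Baldwin:cappingoff}: Baldwin's naturality statement for capping off concerns the cobordism map in a single distinguished $\Spinc$ structure, whereas the map $F_{-X_n}$ appearing in the surgery exact triangle is the sum of the maps over all $\Spinc$ structures on the $2$-handle cobordism. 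So Baldwin only controls one $\Spinc$-component of $F_{-X_n}(c(\xi_{g,n}))$, and a priori other components contribute. This is exactly the point made in the remark following the lemma in the paper: using \cite{Baldwin:cappingoff} one can pin down $F_{-X_n}(c(\xi_{g,n}))$ only as a sum of two classes, one of which represents $c(\xi_{g,n+1})$, and a further argument with $\Spinc$ structures and the conjugation symmetry of $\hf(-M_{g,n+1})$ is needed to show the other class vanishes (details are deferred to \cite{conwaythesis}). Your ``bookkeeping check on Figure~\ref{fig:sigmagn}'' only addresses the identification of the capped-off open book with $(\Sigma_{g,n},\phi_{g,n})$ — which is indeed how the triangle is set up — but not this $\Spinc$ issue.

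The paper's actual proof avoids the problem by trading the capping-off cobordism for a contact-surgery cobordism: by the result of Etnyre and Van Horn-Morris, the binding $K$ is contactomorphic to the positive transverse push-off of the Legendrian $L$ of Figure~\ref{fig:legendrian} with $tb(L)=2g-1$, so by Theorem~\ref{Transverse=Legendrian} the contact manifold $(M_{g,n},\xi_{g,n})$ is contact $(+1/n)$-surgery (with negative bypasses) on $L$, i.e.\ the result of contact $(+1)$-surgeries on $n$ push-offs of $L$. The map $F_{-X_n}$ is then induced by a single contact $(+1)$-surgery, for which the Ozsváth--Szabó property $F_{-W}(c(\xi))=c(\xi')$ holds for the full cobordism map, so the contact class is carried along with no $\Spinc$ bookkeeping. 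To repair your argument, either import the $\Spinc$/conjugation computation sketched in the paper's remark, or replace the appeal to \cite{Baldwin:cappingoff} by this Legendrian-surgery identification.
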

% !!!!!!!!!!!!!!!!!!!!!!!!!
% PPPPPPPPPP
\begin{proof}
Consider the Legendrian knot $L$ in Figure~\ref{fig:legendrian} with $tb(L) = 2g-1$ (\textit{cf.\@} \cite[Figure 1]{LS:non-fillable}). Note that $L$ is topologically isotopic to $K \subset (M_g,\xi_g)$, the binding of $(\Sigma_g^1, \rm{id}_{\Sigma_g^1})$.  Etnyre and Van Horn-Morris \cite[Theorem 1.6]{EVHM:fibered} show that if $T$ is a positive transverse push-off of $L$, then there is a contactomorphism of $(M_g,\xi_g)$, the contact manifold supported by $(\Sigma_g^1, \rm{id}_{\Sigma_g^1})$, such that $T$ is identified with $K$.  Thus, inadmissible transverse surgery on $K$ is contactomorphic to inadmissible transverse surgery on $T$, which by Theorem~\ref{Transverse=Legendrian} is identified with contact surgery on $L$.  By the proof of the invariance of the contact invariant, its non-vanishing is preserved under contactomorphism.  Hence, to show that $c(\xi_{g,n}) \neq 0$, it is enough to show that contact $(+1/n)$-surgery on $L$ has non-vanishing contact invariant.

\begin{figure}[htbp]
\begin{center}
\begin{overpic}[scale=.7,tics=30]{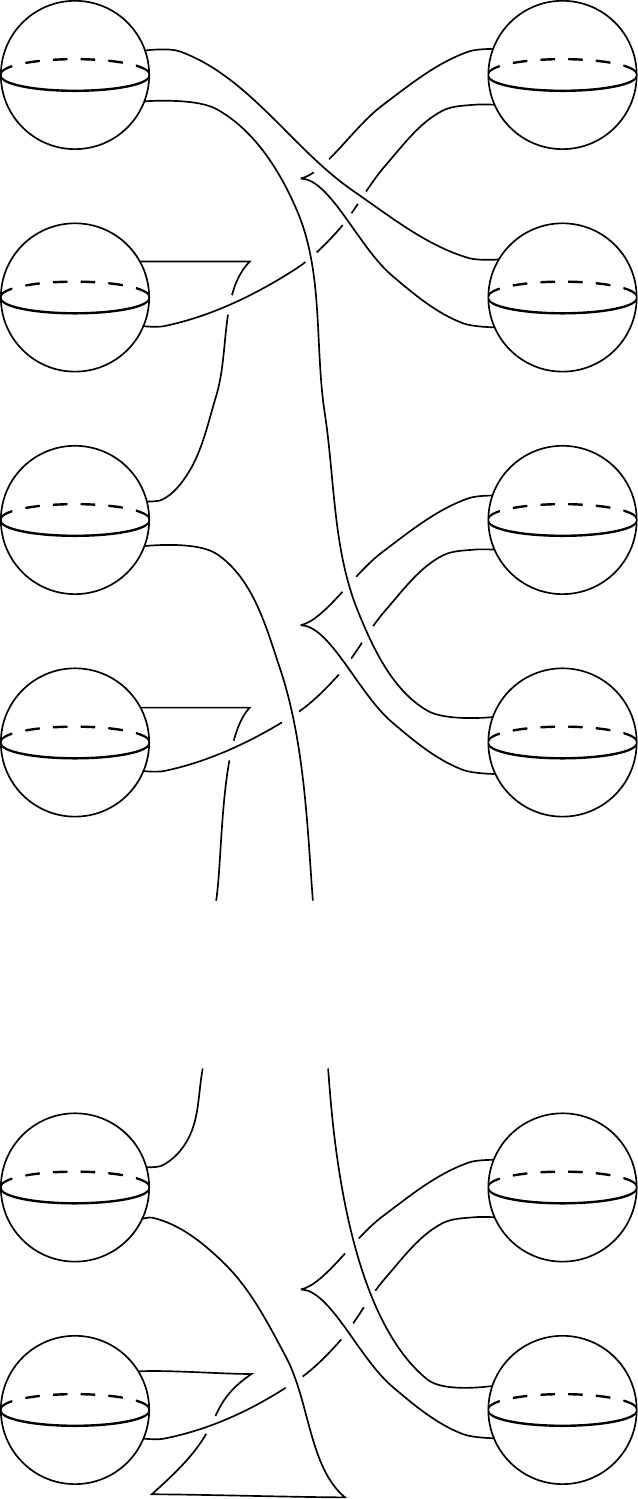}
\put(106,172){\LARGE $\vdots$}
\put(70,172){\LARGE $\vdots$}
\end{overpic}
\caption{The Legendrian knot $L$ with $tb(L) = 2g-1$ in $(M_g,\xi_g)$.  Each sphere on the left is identified with the corresponding sphere on the right, and there are $2g$ such pairs.  This image is a reproduction of \cite[Figure 1]{LS:non-fillable}.}
\label{fig:legendrian}
\end{center}
\end{figure}

Contact $(+1/n)$-surgery on $L$ can be realised as contact $(+1)$-surgeries on $n$ push-offs of $L$.  In addition, although $T$ might not be actually transverse isotopic to $K$, they are topologically isotopic, so the exact triangle of Heegaard Floer groups using the cobordisms $-X_n$ and $-Y_n$ apply to surgery on $T$ as well, and hence to surgeries on $L$.  We can thus realise $F_{-X_n}$ from $\hf(-M_{g,n})$ to $\hf(-M_{g,n+1})$ as being induced by contact $(+1)$-surgery, and thus $F_{-X_n}(c(\xi^-_{1/n}(L))) = c(\xi^-_{1/(n+1)}(L)).$   By Lemma~\ref{lem:cobordism vanishing}, $F_{-X_n}$ is an injective map.  By Theorem~\ref{thm:HP}, $c(\xi^-_{1}(L)) \neq 0$, as $(M_{g,1},\xi^-_{1}(L))$ is contactomorphic to $(M_{g,1},\xi_{g,1})$, and that is inadmissible transverse $2g$-surgery on $K$.  The lemma follows by induction on $n$.

Note that with a little extra effort, we can re-prove the non-vanishing of $c(\xi_{g,1})$, by showing that $F_{-X_0}$ is injective, where $F_{-X_0}$ is as in the exact triangle:

\begin{center}
\begin{tikzpicture}[node distance=2cm,auto]
  \node (L) {$\hf(-M_g)$};
  \node (R) [right of=L, node distance=4cm] {$\hf(-M_{g,1})$};
  \node (B) [below of=L, right of=L, node distance=2cm] {$\hf(-M_{g,\infty}).$};
  \draw[->] (L) to node {$F_{-X_0}$} (R);
  \draw[->] (R) to (B);
  \draw[->] (B) to node {$F_{-Y_0}$} (L);
\end{tikzpicture}
\end{center}
For this, we consider the cobordism $-Y_0$ backwards, as a cobordism from $M_g$ to $M_{g,\infty}$.  This is given by attaching a $2$-handle to $K$ with framing $2g-1$.  Thus, by gluing the core of the $2$-handle to a Seifert surface for $K$, we create a surface $\Sigma$ with genus $g$ and self-intersection $2g-1$.  By Theorem~\ref{hfadjunction}, $F_{-Y_0}$ is identically $0$, and hence $F_{-X_0}$ is injective.
\end{proof}
% PPPPPPPPPP

\begin{remark}
Alternatively, we can directly prove Lemma~\ref{lem:model OBD HF} without using the Legendrian knot $L$.  We pick a an open book for the surgery in order to make the Heegaard diagram simple. We then consider the Heegaard triple induced by capping off.  Using results from \cite{Baldwin:cappingoff}, we can narrow down the image $F_{-X_n}(c(\xi_{g,n}))$ to a sum of two classes, one of which represents $c(\xi_{g,n+1})$.  By considering $\Spinc$ structures and the conjugation map $\hf(-M_{g,n+1})$, we show that the class not representing $c(\xi_{g,n+1})$ is the zero class in homology, and the result follows.  See \cite{conwaythesis} for more details.
\end{remark}

% PPPPPPPPPP
\begin{proof}[Proof of Theorem~\ref{thm:preserve tightness}]
It is sufficient to prove that inadmissible transverse $(2g-1+1/n)$-surgery is tight for all $n \geq 1$, as given $r > 2g-1$, there is some $n$ such that $r > 2g-1+1/n$, and then inadmissible transverse $r$-surgery on $K$ is inadmissible transverse $(2g-1+1/n)$-surgery on $K$ followed by negative contact surgery on some link, and negative contact surgery preserves tightness, by Wand \cite{Wand}.  For the statement about the Heegaard Floer contact invariant, we note that Ozsváth and Szábo \cite{OS:contact} proved that negative contact surgery preserves non-vanishing of the contact invariant.

Let $\Sigphi$ be the genus $g$ open book for $\Mxi$ with connected binding $K$, and assume that $\xi$ is tight (resp.\ $c(\xi) \neq 0$).  If we plumb $\Sigma$ with an annulus, and extend $\phi$ over this annulus by the identity, we have an open book for $\left(M \varhash \left(S^1 \times S^2\right), \xi'\right)$, where $\xi' = \xi \varhash \xi_{\rm{std}}$, and $\xi_{\rm{std}}$ is the unique tight contact structure on $S^1 \times S^2$.  Since $\xi_{\rm{std}}$ is tight (resp.\ $c(\xi_{\rm{std}}) \neq 0$), we see that $\xi'$ is tight (resp.\ $c(\xi') \neq 0$).  So we can plumb enough copies of $(S^1 \times [0,1],\rm{id}_{S^1\times[0,1]})$ onto $\Sigphi$ such that our new surface $\Sigma'$ is homeomorphic to $\Sigma_{g,n}$, our new monodromy is $\phi'$, and our new supported contact structure $\xi'$ is tight (resp.\ has non-vanishing contact invariant).

Since both $\xi'$ and $\xi_{g,n}$ are tight (resp.\ $c(\xi') \neq 0$ and $c(\xi_{g,n}) \neq 0$), Theorem~\ref{thm:monoid} says that the open book $(\Sigma_{g,n},\phi'\circ\phi_{g,n})$ supports a tight contact structure (resp.\ with non-vanishing contact invariant).  But this is exactly the open book supporting inadmissible transverse $(2g-1+1/n)$-surgery on $K$.
\end{proof}
% PPPPPPPPPP

%%%%%%%%%%%%%%%%%%%%%%%%%
\subsection{Inadmissible Surgery on Links}
%%%%%%%%%%%%%%%%%%%%%%%%%
\label{subsec:link surgery}

Let $\Sigphi$ be an open book with multiple binding components supporting a tight contact structure. Based on the Theorem~\ref{thm:preserve tightness}, it is natural to ask whether sufficiently large surgeries on all the binding components of $\Sigphi$ would result in a tight manifold.  We show that this is in general not the case.  We will show that the model open books for surgery on multiple binding components are overtwisted.

To see this, we first define our model open books.  We start with $(\Sigma_g^n,\rm{id}_{\Sigma_g^n})$, where $\Sigma_g^n$ is a surface of genus $g$ with $n$ boundary components $K_1, \ldots, K_n$.  This is an open book for the connect sum of $2g+n-1$ copies of $S^1 \times S^2$, supporting the unique tight contact structure $\xi$ that is Stein fillable, and $c(\xi) \neq 0$.  Given rational numbers $r_1, \ldots, r_n$, let $(\Sigma_g^n(\bm{r}),\phi(\bm{r}))$ be the open book supporting the result of inadmissible transverse $r_i$-surgery on $K_i$, for each $i = 1,\ldots, n$.

% !!!!!!!!!!!!!!!!!!!!!!!!!
\begin{thm}
\label{thm:link surgery}
The contact structure supported by $(\Sigma_g^n(\bm{r}),\phi(\bm{r}))$ for $n \geq 2$ is overtwisted for every $\bm{r} = (r_1, \ldots, r_n)$.
\end{thm}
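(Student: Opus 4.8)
The plan is to make the model open books $(\Sigma_g^n(\bm k),\phi(\bm k))$ completely explicit using the surgery algorithm of Section~\ref{sec:OBD}, and then to detect overtwistedness at the level of the monodromy: by the right-veering criterion of Honda, Kazez, and Mati\'c, an open book supporting a tight contact structure must be right-veering, so it suffices to produce, for \emph{every} $\bm k$, a properly embedded arc $\gamma$ on $\Sigma_g^n(\bm k)$ that $\phi(\bm k)$ carries strictly to the left of $\gamma$ at one of its endpoints.

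To set this up I would run the construction of Section~\ref{sec:inadmissible OBD}, together with Proposition~\ref{prop:admissible OBD}, separately on each binding component $K_i$: inadmissible transverse $k_i$-surgery is obtained by first composing with the negative Dehn twist $D_{K_i}^{-1}$ (Theorem~\ref{thm:OBD connected binding}) and then carrying out the admissible $r_i'$-surgery construction, all of which takes place inside a subsurface $R_i$ consisting of a neighbourhood of $K_i$ together with the annuli plumbed on during the algorithm. Since the $K_i$ are disjoint, the $R_i$ are pairwise disjoint and $\phi(\bm k)=\phi_1\circ\cdots\circ\phi_n$ with the support of $\phi_i$ contained in $R_i$; moreover each $R_i$ is planar, being built from an annulus by plumbing annuli along boundary-parallel arcs. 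The structural point I want to extract is that, no matter how large $k_i$ is, $\phi_i$ is a single negative Dehn twist about the curve $c_i=\partial_-R_i$ separating $R_i$ from the rest of the surface, composed with positive Dehn twists about curves that are boundary-parallel to the \emph{other} boundary circles of $R_i$. This is exactly what fails in the connected-binding situation of Theorem~\ref{thm:preserve tightness}: there the single surgery region carries the genus $g$ of the page, over which the compensating positive twists are forced to spread, and that is what makes the monodromy right-veering once $r>2g-1$; here there is no genus available to absorb them, for any $\bm k$.

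With this in hand I would fix two distinct components $K_1,K_2$ (this is where $n\ge2$ enters) and choose an arc $\gamma$ that runs from the binding on $\partial_+R_1$, through $\Sigma_g^n(\bm k)\setminus(R_1\cup\cdots\cup R_n)$, to the binding on $\partial_+R_2$, routed inside $R_1$ and $R_2$ so that it meets $c_i$ once and is disjoint from all of the positive-twist curves of $\phi_i$ --- possible precisely because those curves are parallel to boundary circles of $R_i$ other than the two along which $\gamma$ travels. Then $\phi(\bm k)(\gamma)$ differs from $\gamma$ only by the negative half-twists contributed by $D_{c_1}^{-1}$ and $D_{c_2}^{-1}$ near its endpoints, so it lies strictly to the left of $\gamma$, whence $(\Sigma_g^n(\bm k),\phi(\bm k))$ is overtwisted. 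I expect the main obstacle to be the bookkeeping in the middle step: one must track the isotopy classes of all the curves produced by the alternating stabilization/Dehn-twist algorithm of Proposition~\ref{prop:admissible OBD} accurately enough to be certain that $\gamma$ can be routed past every positive twist of each $\phi_i$ while still being caught by $D_{c_i}^{-1}$. A convenient fallback, if the curves become unwieldy, is to first strip off the plumbed annuli by destabilizations --- each one deleting a boundary-parallel positive twist --- thereby reducing to the monodromy $\prod_{i=1}^n D_{K_i}^{-1}$ on $\Sigma_g^n$, for which the arc joining $K_1$ to $K_2$ transparently works; one then only has to verify that those destabilizations are legitimate for the open books actually produced by the construction.
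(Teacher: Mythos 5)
There is a genuine gap, and it sits exactly where you flagged your uncertainty. First, the routing step fails. Run the paper's construction for an integer coefficient $k_i\geq 2$: the piece of the monodromy supported in $R_i$ is $D_{c_i}^{-1}\circ D_{b_i}\circ D_{K_i'}^{\,k_i-1}$, where $c_i$ is the old boundary-parallel curve (which after the single stabilisation encircles \emph{both} boundary circles of $R_i$), $b_i$ is parallel to the stabilisation boundary component, and $K_i'$ is parallel to the surgery-dual boundary component. So every binding circle contained in $R_i$ is shielded from the rest of $R_i$ by a parallel positive twist curve; the only boundary circle of $R_i$ carrying no parallel positive twist is $c_i$ itself, which is not part of the binding. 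An arc whose endpoint lies on the binding inside $R_i$ therefore must cross a positive twist before it ever reaches $c_i$, and your claim that $\gamma$ can be routed disjoint from all positive twists is false whenever $k_i\geq 2$ (it works only for $k_i=1$, where $R_i$ is an annulus and $\phi_i=D_{c_i}^{-1}$). Second, and more fundamentally, the strategy cannot succeed at all when every $k_i\geq 2$: the boundary-parallel positive twists give fractional Dehn twist coefficient at least $1$ at each binding component, with the rest of the monodromy supported away from the corresponding collars, so $\phi(\bm{k})$ is right-veering at every boundary component. Right-veering is only a necessary condition for tightness, and these model open books are precisely of the standard ``right-veering but overtwisted'' type, so there is no left-veering arc to find. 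Your fallback is also unavailable: destabilisation preserves the supported contact manifold, whereas reducing to $\prod_i D_{K_i}^{-1}$ on $\Sigma_g^n$ would change the underlying $3$-manifold (that open book is $1$-surgery on each component, not $k_i$-surgery); concretely, the co-core of the plumbed annulus at $K_i$ has an endpoint on the surgery-dual boundary and hence meets the twists $D_{K_i'}$, so the monodromy is not of the form $\psi\circ D_{b_i}$ with $\psi$ supported off that co-core.

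The paper's proof avoids veering arguments entirely, and the contrast shows why a different mechanism is needed. It first shows that the open books for $\bm{k}$ and for $\bm{k}'$, where $k_i'=k_i-1$ and $k_j'=k_j+1$, support the same contact manifold: $k_j$-surgery on $K_j$ is rewritten as a stabilisation followed by $(k_j+1)$-surgery with respect to the new page slope, and the stabilisation boundary component is re-attributed to the $K_i$-region (Figure~\ref{fig:link surgery}). Iterating, one may assume some $k_i=0$, i.e.\ page-slope surgery on $K_i$; a Legendrian realisation of a page push-off of $K_i$ has Thurston--Bennequin invariant equal to the page slope and hence bounds an overtwisted disc after the surgery. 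This produces an explicit overtwisted disc and so detects overtwistedness even though the monodromies in question are right-veering.
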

% !!!!!!!!!!!!!!!!!!!!!!!!!
% PPPPPPPPPP
\begin{proof}
If $r_i \leq 0$ for some $i$, then the contact manifold is already overtwisted.  Indeed, in the contact manifold supported by $(\Sigma_g^n,\rm{id}_{\Sigma_g^n})$, there is a Legendrian knot $L$ in a neighbourhood of $K_i$ with contact framing invariant equal to $r_i$ (in fact, it is topologically a cable of $K_i$): we can find a Legendrian representative of $K_i$ with contact framing equal to the page slope by realising a push-off of $K_i$ on the page, and this implies (by looking at a standard neighbourhood of $K_i$) that any contact framing less than this can also be realised in the knot type of some cable of $K_i$.  Thus, since after performing transverse inadmissible $r_i$-surgery on $K_i$ (with respect to the page slope), $L$ will bound an overtwisted disc, the contact manifold supported by $(\Sigma_g^n(\bm{k}),\phi(\bm{k}))$ is overtwisted.

So assume that $r_i > 0$ for each $i$.  Given $\bm{r}$, we define $\bm{r}'$ as follows.  Fix two distinct indices $i \neq j$, and let $r_i' = r_i - 1$, $r_j' = r_j + 1$, and let $r_l' = r_l$ for all $l \neq i, j$.  We claim that $(\Sigma_g^n(\bm{r}'),\phi(\bm{r}'))$ supports the same contact manifold as $(\Sigma_g^n(\bm{r}),\phi(\bm{r}))$.  The proof is contained in Figure~\ref{fig:link surgery} in the case that $\bm{r}$ is a collection of integers (although the proof is the same, the open book is more complicated when $r_i$ is non-integral).  Note that inadmissible transverse $r_j$-surgery on $K_j$ can be written in multiple ways, either directly (as in Figure~\ref{fig:sigmagn after negative Dehn twist and destab}) or by first stabilising $K_j$, and performing $(r_j+1)$-surgery with respect to the new page slope.  The boundary component created by stabilisation can now be thought of as coming from the $r_i$-surgery on $K_i$, thus changing the surgery coefficients as claimed.  We can decrease $k_i$ in this way until it is less than or equal to $0$, and then we are done, as in the first paragraph.
\end{proof}
% PPPPPPPPPP

\begin{figure}[htbp]
\begin{center}
\begin{overpic}[scale=2.3,tics=30]{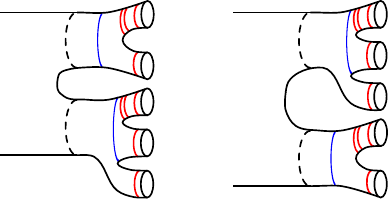}
\put(130,205){\small \rotatebox{105}{$\left.\begin{array}{c} \vspace{.1cm} \\ ~ \end{array}\right\}$}}
\put(133,225){$r_i - 1$}
\put(390,205){\small \rotatebox{105}{$\left.\begin{array}{c} \vspace{.1cm} \\ ~ \end{array}\right\}$}}
\put(393,225){$r_i-1$}
\put(130,108){\small \rotatebox{105}{$\left.\begin{array}{c} \vspace{.1cm} \\ ~ \end{array}\right\}$}}
\put(135,128){$r_j$}
\put(386,73){\small \rotatebox{105}{$\left.\begin{array}{c} \vspace{.1cm} \\ ~ \end{array}\right\}$}}
\put(392,92.5){$r_j$}
\put(356,45){\large $-$}
\put(116,75){\large $-$}
\put(98,170){\large $-$}
\put(374,170){\large $-$}
\put(59,177){$K_i$}
\put(59,75){$K_j$}
\put(317,170){$K_i$}
\put(317,45){$K_j$}
\end{overpic}
\caption{On the left, we have performed $r_i$-surgery on $K_i$, and $r_j$-surgery on $K_j$ with respect to the page slope, although each is expressed in different ways.  After moving the bottom boundary component in the left picture to be in the middle in the right picture, we see that open book is the same as that for $(r_i-1)$-surgery on $K_i$ and $(r_j+1)$-surgery on $K_j$.  In these open books, the red unmarked curves represent positive Dehn twists and the blue curves marked with a minus sign represent negative Dehn twists.}
\label{fig:link surgery}
\end{center}
\end{figure}

%%%%%%%%%%%%%%%%%%%%%%%%%
\subsection{Contact Width and Other Invariants}
%%%%%%%%%%%%%%%%%%%%%%%%%

The \textit{contact width} $w(K)$ of a topological knot $K$ in a contact manifolds $\Mxi$ is the supremum of the slopes of the dividing curves of the convex boundaries of regular neighbourhoods of transverse knots in the knot type $K$.  This has been in general tough to calculate, and has been worked out only in very specific cases (for example, torus knots in $(S^3,\xi_{\rm{std}})$).  Our results allow us to easily calculate the contact width of a large class of knots.

% PPPPPPPPPP
\begin{proof}[Proof of Corollary~\ref{cor:contact width}]
Since there is a Legendrian approximation $L$ of $K$ with $tb(L) = 2g-1$, and a regular neighbourhood of $L$ with convex boundary with dividing curve slope $2g-1$ is a regular neighbourhood of $K$, we know that $w(K) \geq 2g-1$.  If $w(K) > 2g-1$, then $K$ would have a regular neighbourhood $S$ with convex boundary with dividing curves of slope $s > 2g-1$.  Theorem~\ref{thm:preserve tightness} implies that inadmissible transverse $r$-surgery on $S$ is tight for all $r > 2g-1$.  In particular, inadmissible transverse $s$-surgery on $K$ is tight. But the Legendrian divides on $\bd T$ now bound an overtwisted disc in $M_s(K)$, which is a contradiction.
\end{proof}
% PPPPPPPPPP

\begin{remark}
For knots in $(S^3,\xi_{\rm{std}})$, we could use results of Lisca and Stipsicz \cite{LS:hf1} instead of Theorem~\ref{thm:preserve tightness} to prove Corollary~\ref{cor:contact width}.
\end{remark}

\begin{remark}
The knots that are closures of positive braids in $(S^3,\xi_{\rm{std}})$ are fibred and have Legendrian representatives with $tb = 2g-1$, see \cite{Kalman}. Corollary~\ref{cor:contact width} then implies that the contact width of all these knots is $2g-1$.  Other examples outside of $S^3$ are plentiful, but it is unclear what large families of knots are interesting and easily describable.
\end{remark}

In another direction, Baldwin and Etnyre \cite{BE:transverse} have defined an invariant $t(K)$ of a transverse knot $K$ in a contact manifold $\Mxi$.  Given a standard neighbourhood $N$ of $K$ with boundary slope $a$, we extend their definition to include inadmissible transverse surgery and define
\begin{align*}
t(K,N) = \{ r \in \Q \,|\,&\mbox{admissible (resp.\ inadmissible) transverse $r$-surgery} \\ 
&\mbox{on $K$ using $N$ is tight, where $r < a$ (resp.\ $r \geq a$)}\}.
\end{align*}
When either the result is independent of the neighbourhood $N$, or $N$ is understood, we leave it off, and just right $t(K)$.

From Lisca and Stipsicz \cite{LS:hf1}, we know examples in $S^3$ where $t(K,N) = \R\backslash\overline{tb}(L)$; for example, knots who have Legendrian approximations with $tb(L) = 2g(L)-1$, like the positive torus knots, and $N$ is a neighbourhood of a maximum $tb$ representative of $K$.  The inadmissible half of $t(K,N)$ follows from \cite{LS:hf1}, and the admissible half follows from work of Etnyre, LaFountain, and Tosun \cite{ELT:cables} in the cases where the surgery is not representable by negative contact surgery. Given Theorem~\ref{thm:all OT surgeries}, we now have examples inside $S^3$ where $t(K,N)$ is not almost all of $\R$; for example, with Remark~\ref{overtwistedpositivesurgeries} for negative torus knots, we see that $t(K) = [-\infty,\overline{tb}(K))$ (the negative torus knots are uniformly thick, see \cite{EH:cables}, and so the result is independent of $N$).  Thanks to Corollary~\ref{cor:contact width}, we have more information on this invariant for knots outside of $S^3$.

%%%%%%%%%%%%%%%%%%%%%%%%%
%%%%%%%%%%%%%%%%%%%%%%%%%
\section{Fillability and Universal Tightness}
%%%%%%%%%%%%%%%%%%%%%%%%%
%%%%%%%%%%%%%%%%%%%%%%%%%
\label{sec:fillability}

Having seen that the model open books constructed in Section~\ref{sec:tight surgeries} are tight, and have non-vanishing Heegaard Floer contact invariant, we now ask about their other properties, namely, fillability and universal tightness.

%%%%%%%%%%%%%%%%%%%%%%%%%
\subsection{Fillability}
%%%%%%%%%%%%%%%%%%%%%%%%%
\label{subsec:fillability}

A contact 3-manifold $\Mxi$ is \textit{weakly fillable} if there exists a symplectic 4-manifold $(X, \omega)$ with a compatible almost-complex structure $J$ such that $\Mxi$ is orientation-preserving contactomorphic to $(\partial X,T\partial X \cap J(T\partial X))$, and $\omega$ restricted to $T\partial X \cap J(T\partial X)$ is positive.  We say that it is \textit{weakly semi-fillable} if it is the weak convex boundary of a 4-manifold that may have other convex boundary components.  We say that $\Mxi$ is \textit{strongly fillable} if there is a weak filling $(X, J, \omega)$, where $(X, \omega)$ is a symplectic 4-manifold, and there is a vector-field $v$ pointing transversely out of $X$ along $M$, such that $\xi = \ker \iota_v\omega$, and the flow of $v$ preserves $\omega$.  It is \textit{Stein fillable} if $(X,J,\omega)$ is \textit{Stein}.

We know from Theorem~\ref{thm:monoid} that there are monoids in mapping class group for monodromies corresponding to the different fillability conditions listed above.  If the model open books we constructed were weak (resp.\ strongly, Stein) fillable, then large enough positive surgeries on fibred knots supporting weakly (resp.\ strongly, Stein) fillable contact structures would be weakly (resp.\ strongly, Stein) fillable as well.  We show that these open books are not weakly fillable, and hence not strongly or Stein fillable.  Since these open books correspond to positive surgeries on the standard tight contact structures on connect sums of copies of $S^1 \times S^2$, which are Stein fillable, we see that no fillability condition on the original contact structure is enough by itself to guarantee fillability of the surgered manifold.

\begin{figure}[htbp]
\begin{center}
\begin{overpic}[scale=1.5]{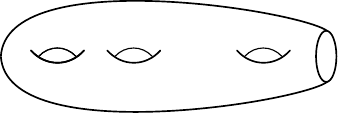}
\put(134,38){\LARGE $\cdots$}
\end{overpic}
\caption{The open book $(\Sigma_g^1,id_{\Sigma_g^1})$ with trivial monodromy.}
\label{fig:genus g trivial}
\end{center}
\end{figure}

We begin by noting that the binding $K$ of the genus $g$ open book with connected binding and trivial monodromy $(\Sigma_g^1,id_{\Sigma_g^1})$ is a transverse knot in the contact structure supported by the open book with $sl(K) = 2g-1$ (see Figure~\ref{fig:genus g trivial}).  By the discussion in the proof of Lemma~\ref{lem:model OBD HF}, inadmissible transverse surgery on $K$ is contactomorphic to positive contact surgery on the Legendrian knot $L$ in Figure~\ref{fig:legendrian}.  We will show that contact $(n-2g+1)$-surgery on $L$ is not weakly fillable for any odd integer $n \geq 2g$, and since fillability is preserved by contactomorphism, the same is true of inadmissible transverse $n$-surgery on $K$.  This will be enough for us to conclude that inadmissible transverse $r$-surgery on $K$ is not weakly fillable for any rational $r> 0$.

Since $L$ has $tb(L) = 2g-1$ and $rot(L) = 0$, inadmissible transverse $n$-surgery on $K$ is contactomorphic to contact $(n-2g+1)$-surgery on $L$, for $n \geq 2g$.  Using Construction~\ref{DingGeiges}, we take $L_1$, a negatively stabilised push-off of $L$, and $L_2, \ldots, L_{n-2g}$, push-offs of $L_1$, and perform contact $(+1)$-surgery on $L$ and contact $(-1)$-surgery on $L_i$, $i = 1, \ldots, n-2g$.

We will calculate the homotopy invariants of the contact structure obtained by this surgery.  The formulae used here are from Ding, Geiges, and Stipsicz \cite{DGS:surgery}, based on work of Gompf \cite{Gompf}.

% !!!!!!!!!!!!!!!!!!!!!!!!!
\begin{prop}\label{prop:homotopy for fillable}
If $\Mxi$ is the contact manifold resulting from contact $(n-2g+1)$-surgery on $L$ (with all negative stabilisation choices), for $n \geq 2g$, then the homotopy invariants of $\xi$ are $\PD c_1(\xi) = (n-2g)[\mu_L]$ and $d_3(\xi)=\frac{4g^2-3n+n^2}{4n}$.
\end{prop}
% !!!!!!!!!!!!!!!!!!!!!!!!!
% PPPPPPPPPP
\begin{proof}
Let $\Mxi$ be the resulting manifold.  The Poincaré dual of $c_1(\xi)$ is given by
$$\PD c_1(\xi) = rot(L) [\mu_L] + \sum_{i=1}^{n-2g} rot(L_i) [\mu_{L_i}] = -\sum_{i=1}^{n-2g} [\mu_{L_i}].$$
Since the topological surgery diagram is given by Figure~\ref{fig:n surgery kirby diagram}, we see that $[\mu_{L_i}] = -[\mu_L]$ in $H_1(M)$, and so $$\PD c_1(\xi) = \left(n-2g\right)[\mu_L].$$

\begin{figure}[htbp]
\begin{center}
\begin{overpic}[scale=1,tics=10]{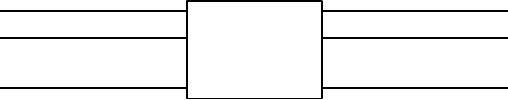}
\put(110,29){$2g-1$}
\put(100,14){full twists}
\put(40,13){$\vdots$}
\put(180,13){$\vdots$}
\put(247,40){$L$}
\put(247,26){$L_1$}
\put(247,2){$L_n$}
\put(210,46){$2g$}
\put(210,20){$2g-3$}
\put(210,-5){$2g-3$}
\put(60,39.93){$>$}
\put(60,27.05){$>$}
\put(60,3){$>$}
\end{overpic}
\caption{Topological surgery description of $M$.}
\label{fig:preslide n surgery kirby diagram}
\end{center}
\end{figure}

To work out the three-dimensional invariant $d_3(\xi)$, we consider the almost-complex manifold $(X,J)$ with boundary $\Mxi$ obtained by taking $B^3$, attaching $2g$, $1$-handles (corresponding to the connect sum of $S^1 \times S^2$ supported by the open book in Figure~\ref{fig:genus g trivial}), and then attaching $n-2g+1$, $2$-handles along $L, L_1, \ldots, L_{n-2g}$ with framings $2g, 2g-3, \ldots, 2g-3$, see Figure~\ref{fig:preslide n surgery kirby diagram}.

From the construction, we see that $\chi(X) = 1 - 2g + (n-2g+1) = 2-4g+n$.  The linking matrix for the 2-handle surgeries is given by
$$N = \left(\begin{array}{cccccc}2g&2g-1&2g-1&\cdots&2g-1&2g-1 \\2g-1&2g-3&2g-2&\cdots&2g-2&2g-2 \\2g-1&2g-2&2g-3&\cdots&2g-2&2g-2 \\ \vdots & \vdots & \vdots & \ddots & \vdots & \vdots \\ 2g-1&2g-2&2g-2&\cdots&2g-3&2g-2 \\ 2g-1&2g-2&2g-2&\cdots&2g-2&2g-3 \end{array}\right).$$
We claim that this matrix has signature $1-(n-2g)$, and thus that $\sigma(X) = 1-n+2g$.  Indeed, by handle-sliding $L_i$ over $L$, for $i = 1, \ldots, n-2g$, we get the topological surgery picture in Figure~\ref{fig:n surgery kirby diagram}.   After handlesliding $L$ over  the $-1$ framed unknots, we get $n-2g$ unlinked copies of a $-1$ framed unknot, and $n$ surgery on $L$.  Note that this calculation is valid even in the presence of $1$-handles, as all knots involved in the calculation are null-homologous.

\begin{figure}[htbp]
\begin{center}
\begin{overpic}[scale=1.5,tics=15]{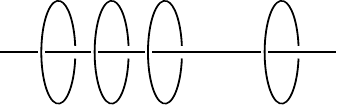}
\put(-7,30){$L$}
\put(5,42){$2g$}
\put(19,60){$L_1$}
\put(40,76){$-1$}
\put(58,60){$L_2$}
\put(80,76){$-1$}
\put(96,60){$L_3$}
\put(118,76){$-1$}
\put(150,60){\LARGE $\cdots$}
\put(220,60){$L_{n-2g}$}
\put(205,76){$-1$}
\put(155,35.3){$>$}
\put(28,50){\rotatebox{80}{$>$}}
\put(66.6,50){\rotatebox{80}{$>$}}
\put(105.1,50){\rotatebox{80}{$>$}}
\put(189.45,50){\rotatebox{80}{$>$}}
\end{overpic}
\caption{Topological surgery description of $M$ after handle-sliding $L_i$ over $L$, for $i = 1, \ldots, n-2g$.}
\label{fig:n surgery kirby diagram}
\end{center}
\end{figure}

To calculate $c_1^2(X,J)$, we first note that $n\cdot\PD c_1(\xi) = 0$, and thus $n\cdot\PD c_1(\xi)$ is in the image of $N$.  We can check that $$N^{-1}\left(n \cdot \PD c_1(\xi)\right) = \left(\begin{array}{c}(n-2g)(1-2g)\\2g\\\vdots\\2g\end{array}\right).$$  Thus we calculate
$$c_1^2(X,J) = \frac{1}{n^2}\left(\begin{array}{c}(n-2g)(1-2g)\\2g\\\vdots\\2g\end{array}\right)\cdot \left(\begin{array}{c}0 \\ -n \\ \vdots \\ -n \end{array}\right) = \frac{4g^2-2gn}{n}.$$

We combine these results to calculate
$$d_3(\xi) = \frac{1}{4}\left(c_1^2(X,J) - 3\sigma(X) - 2\chi(X)\right) + 1 = \frac{4g^2-3n+n^2}{4n},$$
where the plus 1 comes from the contact $(+1)$-surgery on $L$.
\end{proof}
% PPPPPPPPPP

% !!!!!!!!!!!!!!!!!!!!!!!!!
\begin{thm}\label{thm:non-fillable surgeries}
If $(M_r,\xi_r)$ is the contact structure coming from inadmissible transverse $r$-surgery on $K$, then $\xi_r$ is not weakly fillable for any rational $r > 0$.  In particular, inadmissible transverse surgery does not preserve fillability.
\end{thm}
% !!!!!!!!!!!!!!!!!!!!!!!!!
% PPPPPPPPPP
\begin{proof}
Let $(M'_n,\xi'_n)$ be the contact manifold coming from contact $(n-2g+1)$-surgery on $L$, as in Proposition~\ref{prop:homotopy for fillable}. The values of $c_1(\xi'_n)$ and $d_3(\xi'_n)$ calculated in Proposition~\ref{prop:homotopy for fillable} are the same as those calculated by Lisca and Stipsicz \cite{LS:non-fillable} for Honda's examples \cite{Honda:classification2} of circle bundles over $\Sigma_g$ with Euler number $n$.  For $n$ odd, we can conclude that our examples are in the same $\Spinc$ structure as their $\xi_0$ (whereas for $n$ even, there is a 2:1 correspondence of $c_1$ to $\Spinc$ structures, and so we cannot conclude anything).  Lisca and Stipsicz \cite{LS:non-fillable} proved that a contact structure in this $\Spinc$ structure and with this $d_3$ invariant cannot be weakly fillable, or even weakly semi-fillable.

By \cite{BE:transverse}, $\xi'_n$ is achievable by negative contact surgery on a link in $\xi_r$ for any rational $0 < r < n$, and negative contact surgery preserves weak fillability \cite{EH:non-fillable}.  Thus, the fact that $\xi'_n$ is not weakly fillable for any odd integer $n$ implies that $\xi_r$ is not weakly fillable for any rational $r > 0$.
\end{proof}
% PPPPPPPPPP

\begin{remark} We calculated $c_1(\xi)$ using a Legendrian surgery diagram.  It is possible to calculate this directly from the transverse surgery picture.  See \cite{conwaythesis} for details. \end{remark}

%%%%%%%%%%%%%%%%%%%%%%%%%
\subsection{Universal Tightness}
%%%%%%%%%%%%%%%%%%%%%%%%%

Recall that given a cover $M' \to M$ and a contact structure $\xi$ on $M$, there is an induced contact structure $\xi'$ on $M'$.  We say $\Mxi$ is \textit{universally tight} if $\xi$ is tight and the induced contact structure on every cover is also tight; otherwise, $\Mxi$ is \textit{virtually overtwisted}.  Since the fundamental group of a compact $3$-manifold is residually finite (by geometrization), universal tightness is equivalent to requiring that all finite covers of $\Mxi$ remain tight \cite{Honda:classification1}.

\begin{remark} Honda has classified \cite{Honda:classification2} tight contact structures on $S^1$ bundles over $T^2$ with Euler class $n$, and shown that there are exactly one (resp.\ two) virtually overtwisted contact structures when $n = 2$ (resp.\ $n > 2$).  Van Horn-Morris \cite{VHM} has shown that all the universally tight contact structures that Honda classifies are weakly fillable.  Thus, by Theorem~\ref{thm:non-fillable surgeries}, we can conclude that the contact manifolds resulting from inadmissible transverse $n$-surgery on the binding of $(\Sigma^1_1, id_{\Sigma_g^1})$ support virtually overtwisted contact structures for any integer $n \geq 2$. \end{remark}

% !!!!!!!!!!!!!!!!!!!!!!!!!
\begin{thm}
The result of inadmissible transverse $r$-surgery on the binding of $(\Sigma^1_g, \rm{id}_{\Sigma^1_g})$ is virtually overtwisted for any $r > 0$.  In particular, inadmissible transverse surgery does not preserve universal tightness.
\end{thm}
% !!!!!!!!!!!!!!!!!!!!!!!!!
% PPPPPPPPPP
\begin{proof}
Let $\Sigphi$ be an open book for $\Mxi$ with binding $L$.  Given a finite cover $\pi:M' \to M$, the link $\pi^{-1}(L) \subset M'$ is fibred, and induces an open book with page $\Sigma' = \pi^{-1}(\Sigma)$ with induced monodromy $\phi'$ on $\Sigma'$.  We will show that the open books for inadmissible transverse $r$-surgery on the binding $K$ of the open book $(\Sigma_g^1,\rm{id}_{\Sigma_g^1})$ in Figure~\ref{fig:genus g trivial} are virtually overtwisted for any $r > 0$.

First note that for $g > 0$, the surface $\Sigma=\Sigma_g^1$ admits a double cover by $\Sigma'=\Sigma_{2g-1}^2$.  Let $K$ be the binding component in $(\Sigma,\rm{id}_{\Sigma})$, and let $K_1$ and $K_2$ be the induced binding components in $(\Sigma',\rm{id}_{\Sigma'})$.  It is not hard to see that taking the double cover of the open book resulting from inadmissible transverse $r$-surgery on $K$ is the same as doing inadmissible transverse $r$-surgery on each of $K_1$ and $K_2$ with respect to the framings induced by $\Sigma'$.

By the proof of Theorem~\ref{thm:link surgery}, the contact manifold resulting from inadmissible transverse $r$-surgery on each of $K_1$ and $K_2$ is the same as that resulting from inadmissible transverse $(r-n)$-surgery on $K_1$ and inadmissible transverse $(r+n)$-surgery on $K_2$, for any integer $n$.  In particular, if $n > r$, then we can explicitly find an overtwisted disc in a neighbourhood of the dual knot to $K_1$ in the surgered manifold, and thus the result is overtwisted.

Since this contact manifold is a double cover of the contact manifold obtained by inadmissible transverse $r$-surgery on $K$, we conclude that the latter contact manifold is virtually overtwisted.
\end{proof}
% PPPPPPPPPP

%%%%%%%%%%%%%%%%%%%%%%%%%
%%%%%%%%%%%%%%%%%%%%%%%%%
\section{Overtwisted Surgeries}
%%%%%%%%%%%%%%%%%%%%%%%%%
%%%%%%%%%%%%%%%%%%%%%%%%%
\label{sec:OT surgeries}

In this section, we prove Theorem~\ref{overtwistedsurgery} and its generalisations.  We have the following classification, due to Mark and Tosun, of when inadmissible transverse surgery on a transverse knot $T$ in $(S^3,\xi_{\rm{std}})$ has non-vanishing contact Heegaard Floer invariant.

% !!!!!!!!!!!!!!!!!!!!!!!!!
\begin{thm}[Mark and Tosun \cite{MT}]\label{transversemt} Let $T \subset (S^3, \xi_{\rm{std}})$ be a transverse knot of smooth knot type $K$.  Then the Heegaard Floer contact invariant of inadmissible transverse $r$-surgery on $T$ is non-vanishing for a rational number $r$ if and only if $sl(K) = 2\tau(K) - 1$ and
\begin{itemize}
\item if $\epsilon(K) = 1$, then $r > 2\tau(K) - 1$,
\item if $\epsilon(K) = 0$, then $r \geq 2\tau(K)$.
\end{itemize}
\end{thm}
% !!!!!!!!!!!!!!!!!!!!!!!!!

Here, $\epsilon(K)$ is a knot invariant coming from Heegaard Floer homology that takes values in $\{0, \pm 1\}$.  Mark and Tosun's original paper was in the context of positive contact surgeries on Legendrian knots, and can be converted back to the original formulation by replacing $sl$ with $tb - rot$ and $r$ with $r - tb$.  There is currently no example of a transverse knot $(S^3,\xi_{\rm{std}})$ where some inadmissible transverse surgery is tight, yet $T$ does not satisfy the conditions of Theorem~\ref{transversemt}.  The natural question is whether those surgeries whose contact invariant vanishes are indeed overtwisted.  Theorem~\ref{overtwistedsurgery} gives examples where the surgery is overtwisted, lending support to a positive answer to the following question.

\begin{question} Is the tightness of surgeries on transverse knots in $S^3$ characterised by the contact class of the surgered manifold? \end{question}

Given a Legendrian $L\subset \Mxi$, consider the core of the surgery torus $L^*$ in $(M_{tb(L)+1}(L),\xi_{(+1)}(L))$.  This is naturally a Legendrian knot, and its contact framing agrees with the contact framing of $L$ in $\Mxi$.  A single negative (resp.\ positive) stabilisation of $L^*$ gives us $L^*_-$ (resp.\ $L^*_+$).  To work out the contact framing of $L^*_{\pm}$ measured by the meridian and contact framing of $L$, we pass to the Farey graph.  Indeed, although the Farey graph was described in Section~\ref{sec:construction} with reference to transverse knots in the binding of open books, the same operations can be used to describe the meridional slope and contact framing of Legendrian knots: a negative stabilisation of $L$ is operation (1), while contact $(+1)$-surgery (resp.\ $(-1)$-surgery) on $L$ correspond to operation (2) (resp.\ (3)), \textit{cf.\@} Proposition~\ref{prop:plusminus one surgery}.

For $L$, we set $\infty$ to be the meridian slope (West's label), and $0$ to be the contact framing (East's label).  Thus contact $(+1)$-surgery moves the label $1$ to West, and keeps East's label fixed.  The labels of the points that in the standard labeling of the Farey graph that are labeled with integers (East, North, North-West, West-North-West, $\ldots$, South, South-West, West-South-West, $\ldots$) are possible contact framings for the knot type of $L^*$.  Since the contact framing of $L^*$ is $0$, and that label is at East, a stabilisation will take the next largest contact framing (with respect to the new labeling), which is at North.  This is labeled $\infty$, and hence it is the slope that corresponds to the meridian of $L$.

Thus, the complement of a standard neighbourhood of $L^*_-$ (resp.\ $L^*_+$) with convex boundary is the manifold obtained from the complement of $L$ by attaching a negative (resp.\ positive) bypass to get to the meridian slope $\infty$ as the slope of the dividing curves.  We claim that if these sutured manifolds are overtwisted, then all positive contact surgeries on $L$ are overtwisted.  This is because when performing positive contact surgery, the first step is to add a bypass layer to get to the meridian slope.  This manifold embeds into the surgered manifold, thus if it is overtwisted, then so is the surgered manifold.  If we can only show that the complement of a standard neighbourhood $L^*_-$ is overtwisted, then we can show that all inadmissible transverse surgeries are overtwisted (corresponding to negative bypass layers), but not all positive contact surgeries in general.

To show that these manifolds with boundary are overtwisted, we use tools previously used to detect \textit{loose} Legendrian knots, that is, Legendrian knots where the complement of a standard neighbourhood is overtwisted.  To do this, we follow the lead of Baker and Onaran \cite{BO:non-loose}, using the calculation tools of Lisca, Ozsváth, Stipsicz, and Szábo \cite{LOSS}.

%%%%%%%%%%%%%%%%%%%%%%%%%
\subsection{Invariants of Rationally Null-Homologous Legendrian Knots}
%%%%%%%%%%%%%%%%%%%%%%%%%

In this section, we define the rational Thurston--Bennequin $tb_\Q(L)$ and the rational rotation number $rot_\Q(L)$ of a rationally null-homologous Legendrian knot $L$.  See \cite{BE:rational} for more details and properties.

Given a rationally null-homologous Legendrian knot $L\subset \Mxi$, where $M$ is a rational homology sphere, let $\Sigma$ be a \textit{rational Seifert surface} for $L$ with connected binding.  That is, $\bd\Sigma$ is connected and is homologous to $r\cdot[L]$, where $r$ is the smallest positive integer such that $r \cdot [L] = 0 \in H_1(M;\Z)$.  Given another Legendrian knot $L'$, we define the \textit{rational linking} to be $$lk_\Q(L,L') = \frac{1}{r}\,[\Sigma]\cdot [L'].$$
Consider the framing of the normal bundle of $L$ induced by $\xi|_L$.  Let the Legendrian knot $L'$ be a push-off of $L$ in the direction of this framing.  Then we define the \textit{rational Thurston--Bennequin} number of $L$ to be $$tb_\Q(L) = lk_\Q(L,L').$$
Let $\iota: \Sigma \to M$ be an embedding on the interior of $\Sigma$.  We choose a trivialisation $\tau$ of the pull-back bundle $\iota^*(\xi)$ over $\Sigma$.  Along $\bd\Sigma$, $\tau$ gives an isomorphism of the bundle to $\bd\Sigma \times \R^2$.The tangent vector to $\bd\iota(\Sigma)$ gives a framing of $\xi|_L$, so its pullback $v$ gives a framing of $\iota^*(\xi)$ along $\bd\Sigma$.  We define the \textit{rational rotation number} of $L$ to be $$rot_\Q(L) = \frac{1}{r}\,wind_\tau(v),$$ where $wind_\tau(v)$ measures the winding number of $v$ in $\R^2$ with respect to the trivialisation $\tau$.

A Legendrian knot $L$ in an overtwisted contact manifold is called \textit{loose} if the complement of a standard neighbourhood of $L$ is overtwisted; otherwise, it is called \textit{non-loose}.

% !!!!!!!!!!!!!!!!!!!!!!!!!
\begin{lem}[Świątkowski \cite{Dymara}, Etnyre \cite{Etnyre:OT}, Baker--Onaran \cite{BO:non-loose}]\label{lem:S bound} If $L \subset \Mxi$ is a rationally null-homologous Legendrian knot such that the complement of a regular neighbourhood of $L$ is tight, then $$-|tb_\Q(L)| + |rot_\Q(L)| \leq -\frac{\chi(L)}{r},$$ where $r$ is the order of $[L]$ in $H_1(M; \Z)$. \end{lem}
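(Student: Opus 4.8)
The plan is to run, in the rational category, the standard convex-surface-theory proof of the Bennequin inequality $tb + |rot| \le 2g - 1$. Throughout, let $\Sigma$ be a rational Seifert surface for $L$, so that $\chi(L) = \chi(\Sigma)$ (the resulting bound being strongest for a minimal-genus choice). First I would exploit non-looseness directly: take a standard neighbourhood $N$ of $L$ with convex boundary, so that $M' := M \setminus N$ is tight, $\partial M'$ is convex, and $\Gamma_{\partial M'}$ consists of two curves parallel to the contact framing of $L$. Then I would isotope $\Sigma$ into $M'$ so that $\partial\Sigma$ becomes Legendrian on $\partial M'$ --- this is Legendrian realisation on the convex torus $\partial M'$, available unless the slope of $\partial\Sigma$ is the dividing slope, in which case one first perturbs $\partial N$ to move it off --- and finally make $\Sigma$ convex rel its Legendrian boundary, with dividing set $\Gamma_\Sigma$ and regions $\Sigma_+ \sqcup \Sigma_- = \Sigma \setminus \Gamma_\Sigma$.

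The second ingredient is the dictionary between the rational invariants of $L$ and the dividing set of $\Sigma$, which I would extract from Baker--Etnyre \cite{BE:rational} (the rational analogues of Kanda's and Honda's computations of $tb$ and $rot$ from a convex surface). Writing $a$ for the number of arc components of $\Gamma_\Sigma$, these read
\[
\#\bigl(\Gamma_\Sigma \cap \partial\Sigma\bigr) = 2a = 2r\,|tb_\Q(L)| \qquad\text{and}\qquad r\cdot rot_\Q(L) = \chi(\Sigma_+) - \chi(\Sigma_-);
\]
the first because on $\partial M'$ the classes $[\partial\Sigma]$ and $[\Gamma_{\partial M'}]$ pair to $\pm 2r\,tb_\Q(L)$, and the second once the trivialisation $\tau$ used to define $rot_\Q$ is chosen compatibly with the convex structure on $\Sigma$. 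I would also use the inclusion--exclusion identity $\chi(\Sigma_+) + \chi(\Sigma_-) = \chi(\Sigma) + a$, valid since $\Gamma_\Sigma$ deformation-retracts onto its $a$ arcs.

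The crux is the constraint from tightness of $M'$. Since $\Sigma$ has non-empty boundary and $M'$ is tight, Giroux's criterion forbids homotopically trivial closed components of $\Gamma_\Sigma$; hence every disk component of $\Sigma_+$ or of $\Sigma_-$ must meet $\partial\Sigma$, so it carries at least one arc of $\Gamma_\Sigma$ on its boundary. Because each arc of $\Gamma_\Sigma$ borders exactly one component of $\Sigma_+$, the number of disk components of $\Sigma_+$ is at most $a$, and therefore $\chi(\Sigma_+) \le a$; likewise $\chi(\Sigma_-) \le a$. Substituting these into $\chi(\Sigma_+) + \chi(\Sigma_-) = \chi(\Sigma) + a$ bounds $|\chi(\Sigma_+) - \chi(\Sigma_-)|$ by $a - \chi(\Sigma)$, which by the two displayed formulas is precisely $r\,|rot_\Q(L)| \le r\,|tb_\Q(L)| - \chi(L)$; dividing by $r$ gives the assertion. (When $\Sigma$ is a rational disk the same inequalities apply, using the standard form of a convex disk in a tight manifold; this forces $a \ge 1$ and the bound again holds, with equality when $a=1$.)

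I expect the main obstacle to be neither the Euler-characteristic bookkeeping nor Giroux's criterion, but pinning the rational invariants onto the convex surface: verifying $\#(\Gamma_\Sigma \cap \partial\Sigma) = 2r\,|tb_\Q(L)|$ and $r\cdot rot_\Q(L) = \chi(\Sigma_+) - \chi(\Sigma_-)$ for the right trivialisation takes some care because $\partial\Sigma$ wraps $r$ times around $L$, and the degenerate case where $[\partial\Sigma]$ runs along the dividing slope of $\partial M'$ --- handled by perturbing the convex boundary, which modifies the surface in a controlled but not purely formal way --- is the one place a clean citation to \cite{BE:rational} is unavailable. The rest is standard convex surface theory together with the elementary count above.
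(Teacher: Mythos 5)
The paper does not actually prove this lemma---it is imported from the cited sources (Świątkowski via Dymara, Etnyre, and Baker--Onaran)---and your argument is essentially the proof in those references: non-looseness gives a tight complement of a standard neighbourhood, one makes the rational Seifert surface convex with Legendrian boundary on the convex boundary torus, uses the identities $\#(\Gamma_\Sigma\cap\partial\Sigma)=2r\,|tb_\Q(L)|$ and $r\cdot rot_\Q(L)=\chi(\Sigma_+)-\chi(\Sigma_-)$, and runs the Giroux-criterion count of disk regions exactly as you do, so the proposal is correct in outline. The only place your stated fix is not the standard one is the degenerate case where $\partial\Sigma$ is parallel to the dividing set ($tb_\Q(L)=0$): one cannot perturb the convex boundary to change its dividing slope (that slope is the contact framing and is an invariant of the convex torus); instead one puts the torus in standard form and isotopes $\partial\Sigma$ to a Legendrian divide, which has zero twisting relative to the torus, so the surface can still be made convex and the same count (now with $a=0$, all dividing curves closed and essential) yields the inequality.
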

% !!!!!!!!!!!!!!!!!!!!!!!!!

In fact, Baker and Etnyre showed \cite{BE:rational} that for a rationally null-homologous Legendrian knot in a tight contact manifold, the rational homotopy invariants satisfy $tb_\Q(L) + |rot_\Q(L)| \leq -\chi(L)/r$.  This gives in some cases a better inequality than that from Lemma~\ref{lem:S bound}, but this does not improve the results of this section.

The following lemma has been proved by Lisca, Ozsváth, Stipsicz, and Szábo \cite{LOSS} and Geiges and Onaran \cite{GO:rationalunknots} for surgeries in $(S^3,\xi_{\rm{std}})$.  We extend it to surgeries in a more general contact manifold.

% !!!!!!!!!!!!!!!!!!!!!!!!!
\begin{lem}\label{lem:GO} Let $L_0 \cup \cdots \cup L_n \subset \Mxi$ be a collection of null-homologous Legendrian knots, where $c_1(\xi)$ is torsion.  Perform contact $(\pm 1)$-surgery on $L_i$, for $i = 1, \ldots, n$ (the sign need not be the same for each $i$), and let $a_i$ be the topological surgery coefficient.  Assume that the resulting manifold $(M',\xi')$ has the same rational homology as $M$.  Let $N = (N_{ij})$ for $1 \leq i,j\leq n$ be the matrix given by $$N_{ij} = \left\{ \begin{array}{lr} a_i \,\,\,\,\,& i = j, \\ lk(L_i,L_j) \,\,\,\,\,& i \neq j, \end{array}\right.$$ and let $N_0 = ((N_0)_{ij})$ for $0 \leq i,j \leq n$ be the matrix given by
$$(N_0)_{ij} = \left\{ \begin{array}{lr} 0 \,\,\,\,\,&i = j = 0, \\ a_i \,\,\,\,\,& i = j \geq 1, \\ lk(L_i,L_j) \,\,\,\,\,& i \neq j. \end{array}\right.$$
Then the rational classical invariants for $L$, the image of $L_0$ in $(M',\xi')$, are
$$tb_\Q(L) = tb(L_0) + \frac{\det N_0}{\det N},$$ and $$rot_\Q(L) = rot(L_0) - \left<\left(\begin{matrix} rot(L_1) \\ \vdots \\ rot(L_n) \end{matrix}\right), N^{-1}\left(\begin{matrix} lk(L_0,L_1) \\ \vdots \\ lk(L_0,L_n) \end{matrix}\right)\right>.$$
\end{lem}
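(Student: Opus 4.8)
The plan is to reduce the computation to the known formulas in $(S^3,\xi_{\rm{std}})$ by gluing. The lemma of Lisca, Ozsváth, Stipsicz, and Szábo \cite{LOSS} establishes exactly these formulas when the ambient manifold is $(S^3,\xi_{\rm{std}})$ and the surgery link lives in $S^3$; what I need to upgrade is the ambient contact manifold $\Mxi$ with $c_1(\xi)$ torsion. The key structural observation is that $tb_\Q(L)$ and $rot_\Q(L)$ are computed from data on a \emph{rational Seifert surface} $\Sigma'$ for $L$ inside $(M',\xi')$, and this surface can be built from a Seifert surface $\Sigma$ for $L$ in $M$ together with the cores of the surgery $2$-handles attached along $L_1, \ldots, L_n$. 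Since all of $L_0, \ldots, L_n$ are null-homologous in $M$, the rational homology class of $[L_0]$ in $M'$ (and the minimal order $r$) is governed entirely by the linking matrix $N$, which is the same integer matrix that appears in the $S^3$ statement. So the combinatorics of constructing a rational Seifert surface and computing intersection numbers is identical to the $S^3$ case — only the ``background'' surface $\Sigma$ is different, and it contributes $tb(L)$ and $rot(L)$ respectively (measured in $\Mxi$) rather than $0$.

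\textbf{The steps, in order.} First I would fix a Seifert surface $\Sigma_0$ for $L = L_0$ in $M$ and Seifert surfaces $\Sigma_i$ for each $L_i$, and set up the cobordism $W: M \to M'$ obtained by attaching $2$-handles along $L_1, \ldots, L_n$ with the contact framings (i.e.\ with framing coefficients $a_i = tb(L_i)\pm 1$). Second, I would observe that a rational multiple $r[L_0]$ bounds in $M'$ precisely when one can cap off $r$ parallel copies of $\Sigma_0$ (pushed into the collar) by tubing along the belt spheres of the $2$-handles; the obstruction to doing this with a single copy, and the value of $r$, is read off from solvability of $N\mathbf{x} = r\,(lk(L_0,L_1),\ldots,lk(L_0,L_n))^{T}$ over $\Z$ — this is the same linear algebra as in \cite{LOSS}. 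Third, build the rational Seifert surface $\Sigma'$ explicitly as $r\Sigma_0$ glued to $x_i$ copies of the core of the $i$th $2$-handle (where $\mathbf{x} = (x_1,\ldots,x_n)$ solves the above system), cf.\ the cabling construction used in the proof of Lemma~\ref{lem:cobordism vanishing}. Fourth, compute $tb_\Q(L_0) = \frac{1}{r}\,[\Sigma']\cdot[L_0']$ where $L_0'$ is the contact push-off: the $r\Sigma_0$ part contributes $r\cdot tb(L_0)$ (the contact framing is unchanged under the surgery since $L_0$ is disjoint from the surgery region), and the $2$-handle-core part contributes the intersection data encoded by $N_0$ versus $N$, giving the claimed $tb(L) + \det N_0/\det N$. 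Fifth, for the rotation number, choose a trivialization $\tau$ of $\xi$ over $\Sigma'$; because $c_1(\xi)$ is torsion, such a trivialization exists after passing to the rational Seifert surface (the relative Euler class pairs trivially up to torsion), and $wind_\tau$ of the Legendrian framing decomposes as $r\cdot rot(L_0)$ from the $\Sigma_0$ part plus the contributions $rot(L_i)$ weighted by the coordinates $x_i = $ (the appropriate entries of $N^{-1}(lk(L_0,L_1),\ldots,lk(L_0,L_n))^{T}$), yielding the stated formula.

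\textbf{The main obstacle} I expect is Step Five: making sense of the trivialization $\tau$ and the winding-number bookkeeping when $\Mxi$ is not $S^3$. In $(S^3,\xi_{\rm{std}})$ one has a canonical global trivialization of $\xi_{\rm{std}}$, so $rot(L_i)$ is honestly a winding number with respect to a fixed frame; in a general $\Mxi$ with $c_1(\xi)$ torsion the trivialization of $\xi$ over a surface exists but is not canonical, and one must check that the ambiguity (a class in $H^1(\Sigma';\Z)$, equivalently a choice of how the frame rotates around each boundary/binding curve) cancels out of the final answer for $rot_\Q$ exactly as it does in the $S^3$ case. I would handle this by working relative to a fixed Seifert surface $\Sigma_0$ for $L_0$ in $M$ and its chosen trivialization of $\xi|_{\Sigma_0}$ (which is what \emph{defines} $rot(L_0)$ in $\Mxi$), extending over the $2$-handle cores using the trivializations that define $rot(L_i)$, and then noting that the framing-change contributions are purely topological — linking numbers — hence insensitive to the contact-geometric choices; this is precisely the content that \cite{LOSS} verified in $S^3$ and that goes through verbatim once the background surface is allowed to be $\Sigma_0 \subset M$ rather than a disk in $S^3$. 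An alternative, cleaner route for the $tb_\Q$ half is to invoke the general formula of Baker and Etnyre \cite{BE:rational} relating $tb_\Q$ to the self-intersection of a capping surface in the $2$-handle cobordism, which makes the $\det N_0/\det N$ term fall out of Cramer's rule directly, and I would likely present that computation for $tb_\Q$ and the winding-number computation for $rot_\Q$.
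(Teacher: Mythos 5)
Your high-level skeleton (cable together copies of Seifert surfaces according to the solution of $N\mathbf{x}=-r\,(lk(L_0,L_i))$, read $tb_\Q$ off by Cramer's rule, and let the background surface $\Sigma_0$ contribute $tb(L)$ and $rot(L)$) matches the paper's strategy, and the $tb_\Q$ half is essentially fine since, as the paper notes, that formula is purely topological and goes through as in Geiges--Onaran. However, there are two genuine gaps. First, your rational Seifert surface is built from $r\Sigma_0$ together with copies of the \emph{cores of the surgery $2$-handles}; those cores live in the $4$-dimensional cobordism $W$, not in $M'$, so this is not a rational Seifert surface for $L_0$ in $(M',\xi')$, and the invariants $tb_\Q$, $rot_\Q$ are defined against a surface in $M'$. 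The correct $3$-dimensional construction (and the one the paper uses) takes $r$ copies of $\Sigma_0$ and $k_i$ copies of the Seifert surfaces $\Sigma_i$ of the surgered knots, resolves their intersections by bands, and caps the resulting boundary curves on $\bd N(L_i)$ with meridional discs of the dual solid tori. This is not a cosmetic difference: the coefficients $k_i$ multiplying the $\Sigma_i$ are precisely the mechanism by which the $rot(L_i)$-weighted term enters, because the relative class $\PD c_1(\xi,\bigcup L_i)$ is represented by $\sum rot(L_i)\mu_i + x$ with the $rot(L_i)$ zeros sitting on $\Sigma_i$. If you replace the $\Sigma_i$ by handle cores you have no way to see the $rot(L_i)$ contributions short of extending $\xi$ to an almost-complex structure on $W$ and running a Gompf-style $4$-dimensional computation, which is not automatic here (the $(+1)$-surgery cobordisms are not Weinstein) and which you do not set up.

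Second, the step you flag as the "main obstacle" is in fact the entire new content of the lemma relative to the $S^3$ statement, and your resolution — that the ambiguity cancels "exactly as it does in the $S^3$ case" — begs the question. In $S^3$ there is no $H_1$, so there is nothing to cancel; for general $\Mxi$ with $c_1(\xi)$ torsion the paper must (i) run a Mayer--Vietoris argument to show that, rationally, the residue of $\PD c_1(\xi',L_0)$ modulo $H_1(M;\Q)$ is a multiple of $\mu_0$; (ii) write $\PD c_1(\xi,\bigcup L_i)=\sum rot(L_i)\mu_i + x$ with $x$ a class supported away from the $\Sigma_i$; and (iii) arrange the rational Seifert surface to lie in a neighbourhood of $\Sigma_0\cup\cdots\cup\Sigma_n$ so that $x$ pairs trivially with it, after which invertibility of $N$ over $\Q$ gives the formula. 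Your surface, built from handle cores, does not lie in such a neighbourhood, so step (iii) is not even available as stated, and none of (i)--(iii) appears in your outline. To repair the proposal you should construct the surface in $M'$ as above and supply the homological argument killing the $H_1(M)$ contribution, rather than appealing to the $S^3$ case.
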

% !!!!!!!!!!!!!!!!!!!!!!!!!
% PPPPPPPPPP
\begin{proof}
We give a sketch of the argument, paying attention to where the details differ from \cite[Lemma 2]{GO:rationalunknots}.  For each $i = 0,\ldots,n$, let $\lambda_i$ and $\mu_i$ be the Seifert framing and meridian respectively for $L_i$ in $M$.  Because each $L_i$ is null-homologous, we can conclude that $$H_1(M'\backslash L) \cong H_1(M) \oplus \Big(\Z\langle\mu_0\rangle \oplus \cdots \oplus \Z\langle\mu_n\rangle\Big) / \langle a_i\mu_i + \sum_{\substack{j=0 \\ j\neq i}}^n lk(L_i,L_j)\mu_j = 0,i=1,\ldots,n\rangle.$$

From the Mayer-Vietoris sequence of $M' = M' \backslash L \cup L$, we get the short exact sequence $$0 \to \Z\langle \mu_0\rangle \oplus \Z\langle\lambda_0\rangle \to H_1(M'\backslash L) \oplus H_1(L) \to H_1(M') \to 0.$$
Note that $\Z\langle\lambda_0\rangle \to H_1(L)$ is an isomorphism, and $\mu_0$ maps to $0$ in $H_1(L)$.  Note also that the $H_1(M)$ summand in $H_1(M'\backslash L)$ maps isomorphically onto the $H_1(M)$ summand in $H_1(M')$, and the other summands of $H_1(M'\backslash L)$ map into the other summands of $H_1(M')$.  Thus we can get the short exact sequence $$0 \to \Z\langle\mu_0\rangle \to H_1(M'\backslash L)/H_1(M) \to H_1(M')/H_1(M) \to 0.$$
Since $H_1(M';\Q) = H_1(M;\Q)$, the preceding exact sequence considered with rational coefficients implies that the residue of $\PD c_1(\xi',L)$ in $H_1(M'\backslash L;\Q)/H_1(M;\Q)$ is some rational multiple of $\mu_0$.

To get a formula for $\PD c_1(\xi,\displaystyle\bigcup_{i=0}^n L_i)$, we start with a non-zero vector field $v$ over $L_i$.  Given Seifert surfaces $\Sigma_0, \ldots, \Sigma_n$ for $L_0, \ldots, L_n$ in $M$, we extend $v$ over $\Sigma_i$ such that there are $rot(L_i)$ zeroes over $\Sigma_i$.  Finally, we extend over the rest of $M$.  The zero set of $v$ tells us that $$\PD c_1(\xi,\bigcup_{i=0}^n L_i) = \sum_{i=0}^n rot(L_i)\mu_i + x,$$ where $x$ is the push-forward of some class in $H_1(M)$ that by construction does not intersect $\Sigma_i$.

We claim that we can construct a rational Seifert surface $\Sigma$ for $L$ in $M'$ such that $x$ acts trivially on $\Sigma$.  We then calculate that in $(M',\xi')$, if $L$ is order $r$ in $H_1(M')$, then
$$r\cdot rot_\Q(L) = \PD c_1(\xi',L) \cdot [\Sigma].$$  Notice that $\PD c_1(\xi',L)$ is the push-forward of $\PD c_1(\xi,\displaystyle\bigcup_{i=0}^nL_i)$, and $x$ acts trivially on $\Sigma$.  Thus with rational coefficients, the only free part left that could act non-trivially on $\Sigma$ is generated by $\mu_0$, and since $\mu_0 \cdot [\Sigma] = r$, it must be $rot_\Q(L)\mu_0$.

With rational coefficients, the summand of $\PD c_1(\xi,\displaystyle\bigcup_{i=0}^nL_i)$ corresponding to the $\mu_i$ can be written as an element of the $\Q$ summand of $H_1(M';\Q)$ generated by $\mu_0$. Thus we have the equation $$\sum_{i=0}^n rot(L_i)\mu_i = rot_\Q(L)\mu_0$$ in $H_1(M\backslash L_0;\Q)$.  Note that the surgery gives a cobordism $X:M \to M'$, where $H_2(X) = H_2(M) \oplus \Z^n$ and $H_2(X,M) = \Z^n$.  Thus the long-exact sequence of the pair $(X,M)$ gives $$H_2(M) \to H_2(X) \to H_2(X,M),$$ where the first map is an isomorphism into the $H_2(M)$ summand of $H_2(X)$, and the second map is $0$ on the $H_2(M)$ summand, and acts as the matrix $N$ on the $\Z^n$ summand.  Thus we see that $N$ is an injective map, and thus with rational coefficients, we can invert it.  The formula for $rot_\Q(L)$ then follows.

Let $\widetilde{\Sigma}_i$ be $\Sigma_i$ minus the interior of $N(L_0) \cup \cdots N(L_n)$.  To prove the claim, we construct a surface $\widetilde{\Sigma}$ in $M$ in a neighbourhood of $\widetilde{\Sigma}_0 \cup \cdots \cup \widetilde{\Sigma}_n$ such that its image in $M'$ can be capped off (by meridians of the surgery duals to $L_1, \ldots, L_n$) to a rational Seifert surface $\Sigma$ for $L$.  Since $x$ acts trivially on each $\Sigma_i$, it will also act trivially on $\widetilde{\Sigma}$.

First note that for every positive intersection of $L_i$ and $\Sigma_j$, the intersection of $\bd N(L_i)$ with $\widetilde{\Sigma}_j$ is $-\mu_i$, a meridian that in $M$ links $L_i$ once negatively, see FIgure~\ref{fig:meridian from linking}.  We start with $r$ copies of $\widetilde{\Sigma}_0$, and we would like to pick $|k_i|$ copies of $\widetilde{\Sigma}_i$, $i = 1, \ldots, n$, such that $$k_i\lambda_i - \left(r\cdot lk(L_0,L_i) + \sum_{\substack{j=1 \\ j\neq i}}^n k_j\cdot lk(L_i,L_j)\right)\mu_i = k_i \left(\lambda_i+a_i\mu_i\right).$$  If $k_i < 0$, we reverse the orientation of $\widetilde{\Sigma}_i$. This system of equations corresponds to the intersection of the collection of surfaces with $\bd N(L_i)$, for each $i$.  Comparing the coefficients of $\mu_i$ in each equation, we see that $$\sum_{i=1}^n k_i\cdot lk(L_i,L_j) = -r\cdot lk(L_0,L_i),$$ where we define $lk(L_i,L_i)$ to be $a_i$.  Notice that this is the same as the equation $$N\left(\begin{matrix} k_1 \\ \vdots \\ k_n \end{matrix}\right) = \left(\begin{matrix} -r\cdot lk(L_0,L_1) \\ \vdots \\ -r\cdot lk(L_0,L_n) \end{matrix}\right).$$  Since $N$ is invertible, we can solve for $k_i$.  We claim that each $k_i$ is an integer. To see this, first note that the order $r$ of $L$ is the order of $[L] = \displaystyle\sum_{i=1}^n lk(L_0, L_i)\mu_i$ in $H_1(M'; \Z)$.  Since $$-r\cdot\sum_{i=1}^n lk(L_0, L_i)\mu_i = 0$$ in $H_1(M'; \Z)$, we know that it is a sum of the relations $$a_i\mu_i + \sum_{\substack{j=0 \\ j\neq i}}^n lk(L_i,L_j)\mu_j.$$  Putting these quantities in vector form, this is equivalent to $$\left(\begin{matrix} -r\cdot lk(L_0,L_1) \\ \vdots \\ -r\cdot lk(L_0,L_n) \end{matrix}\right)$$ being an integer linear combination of the columns of $N$, where the coefficients of the linear combination are exactly $k_i$.

The boundary of the $2$-complex given by the union of $r$ copies of $\widetilde{\Sigma}_0$ and $k_i$ copies of $\widetilde{\Sigma}_i$, $i = 1, \ldots, n$ is homologous to an $(r,s)$ curve on $\bd N(L)$, for $s= \displaystyle\sum_{i=0}^n k_i \cdot lk(L_0,L_i)$, and $k_i$ copies of a $(1,a_i)$ curve on $\bd N(L_i)$.  Thus we can find some smooth embedded surface $\widetilde{\Sigma}$ in a neighbourhood of $\widetilde{\Sigma}_0 \cup \cdots \cup \widetilde{\Sigma}_n$ with boundary given by the oriented resolution of the boundary of the $2$-complex.  The boundary components of $\widetilde{\Sigma}$ on $\bd N(L_i)$, $i = 1, \ldots, n$, bound discs in $M'$, and capping off these components gives a rational Seifert surface $\Sigma$ for $L$ in $M'$.

Finally, we calculate $tb_\Q(L)$, by computing the intersection of the contact framing of $L$ in $(M',\xi')$ (which is the same as that of $L_0$ in $(M,\xi)$) and the rational Seifert slope, which is given by $a_0\mu_0 + r\lambda_0$, where $a_0$ is the unique integer such that the Seifert slope is null-homologous in $H_1(M'\backslash L)$.  The details are exactly the same as in \cite[Lemma 2]{GO:rationalunknots}.
\end{proof}
% PPPPPPPPPP

\begin{figure}[htbp]
\begin{center}
\begin{overpic}[scale=1,tics=20]{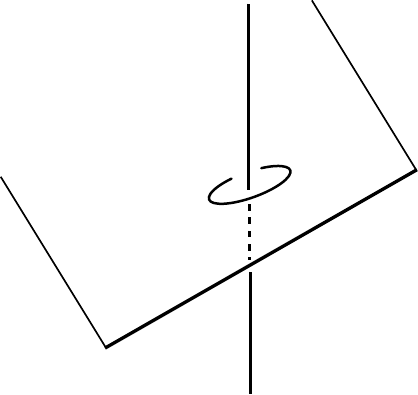}
\put(20,80){$\Sigma_j$}
\put(122,150){$L_i$}
\put(160,76){$L_j$}
\put(138,110){$-\mu_i$}
\put(117.3,130){\rotatebox{90}{$\textbf{>}$}}
\put(138,71){\rotatebox{34}{$\textbf{>}$}}
\put(125,94.3){\rotatebox{24}{$\textbf{<}$}}
\end{overpic}
\caption{When $L_i$ and $L_j$ link positively, the intersection of $\Sigma_j$ with the boundary of a neighbourhood of $L_i$ is $-\mu_i$.}
\label{fig:meridian from linking}
\end{center}
\end{figure}

\begin{remark} If $c_1(\xi)$ is non-torsion, then rotation numbers may depend on the relative homology class of the Seifert surfaces that are chosen.  Thus, given Seifert surfaces $\Sigma_0,\ldots,\Sigma_n$, and using $rot(L_i,\Sigma_i)$ in the formulae, the same proof will calculate $rot_\Q(L,\Sigma)$, where $\Sigma$ is constructed from $\Sigma_0,\ldots,\Sigma_n$ as in the proof. For all our results in this section, the rational Seifert surface is constructed using a cabling process from the original Seifert surface.  However, for clarity, we state all our results in the context of $c_1(\xi)$ torsion. \end{remark}

\begin{remark}\label{remark:tb is topological} The proof of the formula for $tb_\Q(L)$ is entirely topological.  Thus, if we consider the contact surgery diagram as a smooth surgery diagram, and perform Kirby calculus moves on the diagram, then using the $M$ and $M_0$ from the new diagram will give the same value for $tb_\Q(L)$.  The calculations for $rot_\Q(L)$, however, are contact geometric in nature, and so must respect the contact surgery diagram chosen. \end{remark}

%%%%%%%%%%%%%%%%%%%%%%%%%
\subsection{Overtwisted Positive Contact Surgeries}
%%%%%%%%%%%%%%%%%%%%%%%%%

In this section, we prove Theorem~\ref{overtwistedsurgery} and its generalisations.  In particular, Theorem~\ref{overtwistedsurgery} is a combination of Theorem~\ref{thm:all OT surgeries} and the $n=1$ case of Theorem~\ref{thm:n surgery OT}.

The proof of the following theorem was inspired by (and runs similarly to) \cite[Theorem 4.1.8]{BO:non-loose}.

% !!!!!!!!!!!!!!!!!!!!!!!!!
\begin{thm}\label{thm:all OT surgeries} If $K$ is a null-homologous transverse knot in $\Mxi$, $c_1(\xi)$ is torsion, and $sl(K) < -2g(K)-1$, then all inadmissible transverse surgeries on $K$ are overtwisted.  If $L$ is a Legendrian approximation of $K$, and in addition $rot(L) \leq 0$, then all positive contact surgeries on $L$ are overtwisted. \end{thm}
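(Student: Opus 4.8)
The plan is to deduce the theorem from the embedding observations made just before its statement: every inadmissible transverse surgery on $K$ contains (a contactomorphic copy of) the complement of a standard neighbourhood of $L^*_-$, and every positive contact surgery on a Legendrian approximation $L$ contains the complement of a standard neighbourhood of $L^*_-$ or of $L^*_+$. So it suffices to show these complements are overtwisted, i.e.\ that $L^*_-$ (and, under the extra hypothesis $rot(L)\le 0$, also $L^*_+$) is loose. For the first statement we are free to replace $L$ by a sufficiently negatively stabilised Legendrian approximation of $K$, so we may assume $tb(L)\le -2$; for the second statement $rot(L)\le 0$ together with $sl(K)=tb(L)-rot(L)<-2g(K)-1\le -1$ already forces $tb(L)\le -2$. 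In particular $r:=|tb(L)+1|$ is a positive integer, and (since $K$ is null-homologous) it is the order of $[L^*_\pm]$ in the first homology of the surgered manifold.

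To detect looseness I will apply the contrapositive of Lemma~\ref{lem:S bound}: it is enough to check that $-|tb_\Q(L^*_\pm)|+|rot_\Q(L^*_\pm)|>-\chi(L^*_\pm)/r$. The rational invariants are computed with Lemma~\ref{lem:GO}. Realise the core $L^*$ of contact $(+1)$-surgery on $L$ as a Legendrian push-off $L'$ of $L$ (a knot already present in $M$, disjoint from $L$, with $lk(L',L)=tb(L)$) which, after performing that surgery, becomes isotopic to the core of the surgery torus; thus $L^*_-$ (resp.\ $L^*_+$) is a negative (resp.\ positive) stabilisation of $L'$, with $lk(L^*_\pm,L)=tb(L)$. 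Feeding this into Lemma~\ref{lem:GO} with the single surgery knot $L_1=L$ and topological coefficient $a_1=tb(L)+1$ — so $N=(tb(L)+1)$ and $N_0=\matrixb{0}{tb(L)}{tb(L)}{tb(L)+1}$ — a short computation gives
\[
tb_\Q(L^*_\pm)=\frac{-1}{tb(L)+1},\qquad rot_\Q(L^*_-)=\frac{rot(L)-tb(L)-1}{tb(L)+1}=\frac{-sl(K)-1}{tb(L)+1},\qquad rot_\Q(L^*_+)=\frac{rot(L)+tb(L)+1}{tb(L)+1}.
\]
For the Euler-characteristic term, note that the knot exterior of $L^*$ in the surgered manifold is literally the knot exterior of $K$ in $M$, so a genus-$g(K)$ Seifert surface $F$ for $K$ is a rational Seifert surface for $L^*$ (its boundary, the $K$-longitude, represents $\pm r[L^*]$ on $\partial N(L^*)$); since $c_1(\xi)$ is torsion, $rot_\Q$ does not depend on the choice of surface, so $\chi(L^*_\pm)\ge\chi(F)=1-2g(K)$ and hence $-\chi(L^*_\pm)/r\le (2g(K)-1)/r$.

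It remains to combine these. Since $sl(K)<-2g(K)-1\le -1$ we have $tb(L)+1<0$, and the numerators above are positive, so for $L^*_-$
\[
-|tb_\Q(L^*_-)|+|rot_\Q(L^*_-)|=\frac{-sl(K)-2}{r}>\frac{2g(K)-1}{r}\ge -\frac{\chi(L^*_-)}{r},
\]
where the strict inequality is exactly the hypothesis $sl(K)<-2g(K)-1$. Thus $L^*_-$ is loose, and the first statement follows. For $L^*_+$, the hypothesis $rot(L)\le 0$ gives $rot(L)+tb(L)\le -rot(L)+tb(L)=sl(K)<0$, so $rot(L)+tb(L)+1<0$ and
\[
-|tb_\Q(L^*_+)|+|rot_\Q(L^*_+)|=\frac{-rot(L)-tb(L)-2}{r}\ge\frac{-sl(K)-2}{r}>\frac{2g(K)-1}{r}\ge -\frac{\chi(L^*_+)}{r},
\]
so $L^*_+$ is loose as well, giving the second statement.

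I expect the main obstacle to be the honest bookkeeping in the middle step: identifying $L^*$ with a Legendrian push-off of $L$ sitting in the surgered manifold, pinning down $lk(L^*_\pm,L)=tb(L)$ and the order $r=|tb(L)+1|$, and arguing that the rational Seifert surface implicit in Lemma~\ref{lem:GO} may be replaced (it is only the torsion of $c_1(\xi)$ that makes this harmless) by $F$ for the purpose of bounding $\chi(L^*_\pm)$ from below — in other words, checking that the rational genus of the core does not grow with the surgery coefficient. Once the signs of $tb(L)+1$ and of the various numerators are fixed, the concluding inequality-chase is routine.
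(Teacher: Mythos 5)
Your proposal is correct and follows essentially the same route as the paper's proof: reduce to looseness of $L^*_\pm$ via the bypass-layer embedding, compute $tb_\Q$ and $rot_\Q$ with Lemma~\ref{lem:GO} using the same $N$ and $N_0$, and violate the bound of Lemma~\ref{lem:S bound} by the same inequality chase. The only differences are minor clarifications on your part — the explicit reduction to $tb(L)\le -2$ (by negatively stabilising the approximation for the first statement, and deducing it from $rot(L)\le 0$ for the second) and the identification of the genus-$g(K)$ Seifert surface of $K$ as a rational Seifert surface for $L^*$ — points the paper handles implicitly via $\chi(L^*_0)=\chi(L)$.
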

% !!!!!!!!!!!!!!!!!!!!!!!!!
% PPPPPPPPPP
\begin{proof}
Let $L$ be a Legendrian approximation of $K$.  Consider contact $(+1)$-surgery on $L$, and let $L^*$ be the surgery dual.  This is topologically equivalent to $(tb(L)+1)$-surgery on $K$.  Consider a Legendrian push-off $L_0$ of $L$.  This is topologically a $(1, tb(L))$ curve on the boundary of a neighbourhood of $L$, where the longitude is given by the Seifert framing. In particular, $L_0$ is parallel to the dividing curves on the convex boundary of a standard neighbourhood of $L$.  Thus $L_0^*$, the image of $L_0$ after surgery on $L$, is still parallel to the dividing curves on the boundary of a standard neighbourhood of $L^*$, and thus is Legendrian isotopic to $L^*$.  Hence we conclude that $$\chi(L^*_0) = \chi(L^*) = \chi(L) = 1 - 2g(L).$$

We use Lemma~\ref{lem:GO} to work out the rational Thurston--Bennequin and rotation numbers of $L_0^*$.  We have $$N = (tb(L)+1)$$ and $$N_0 = \matrixb{0}{tb(L)}{tb(L)}{tb(L)+1}.$$  Thus since $tb(L_0) = tb(L)$, $rot(L_0) = rot(L)$, and $lk(L_0, L) = tb(L)$, we calculate
\begin{align*}
tb_{\Q}(L_0^*) &= tb(L_0) + \frac{\det N_0}{\det N} \\
&= tb(L) - \frac{tb(L)^2}{tb(L)+1} \\
&= \frac{tb(L)}{tb(L)+1}, \\
rot_{\Q}(L_0^*) &= rot(L_0) - rot(L)\cdot N^{-1}lk(L_0,L) \\
& = rot(L) - rot(L) \cdot \left(\frac{1}{tb(L)+1}\right)(tb(L)) \\
&= \frac{rot(L)}{tb(L)+1}.
\end{align*}

Consider $L^*_+$ and $L^*_-$, the positive and negative stabilisations of $L_0^*$.  As discussed above, the complement of $L^*_{\pm}$ is exactly the complement of $L$ in $N$ with a positive or negative basic slice added to the boundary to take the dividing curves of the boundary torus to meridional curves.  Thus, to show that the latter sutured manifolds are overtwisted, we show that $L^*_{\pm}$ are loose under the hypotheses of the theorem.  If adding a negative bypass to the complement of $L$ is overtwisted, then all inadmissible transverse surgeries on $K$ are overtwisted.  This is because the overtwisted sutured manifold embeds into all inadmissible transverse surgeries on $K$, as the surgeries correspond to choices of negative stabilisations in performing contact surgery (in particular, the first stabilisation is negative).  If adding the positive bypass and adding the negative bypass are both overtwisted, then any first choice of stabilisation in contact surgery will lead to an overtwisted sutured manifold, so all positive contact surgeries on $L$ are overtwisted.

We prove first that $L^*_-$ is loose by using Lemma~\ref{lem:S bound}.  We see that
\begin{align*}
tb_{\Q} (L_-^*) &= \frac{tb(L)}{tb(L)+1}-1 = \frac{-1}{tb(L)+1}, \\
rot_{\Q} (L_-^*) &= \frac{rot(L)}{tb(L)+1} - 1 = \frac{rot(L)-tb(L)-1}{tb(L)+1} = -\frac{sl(K) + 1}{tb(L)+1}.
\end{align*}

Plugging these into Lemma~\ref{lem:S bound}, our assumption that $sl(K) < -2g-1$ tells us that
$$-|tb_{\Q}(L_-^*)| + |rot_{\Q}(L_-^*)| = \frac{|sl(K)+1| - 1}{|tb(L)+1|} > \frac{2g-1}{|tb(L)+1|} = -\frac{\chi(L_-^*)}{|tb(L)+1|},$$ and so $L_-^*$ is loose.

Looking now at $L_+^*$, we calculate
\begin{align*}
tb_{\Q}(L_+^*) &= \frac{tb(L)}{tb(L)+1}-1 = \frac{-1}{tb(L)+1}, \\
rot_{\Q}(L_+^*) &= \frac{rot(L)}{tb(L)+1} + 1 = \frac{rot(L)+tb(L)+1}{tb(L)+1}
\end{align*}
so to break the bound of Lemma~\ref{lem:S bound}, we need $$|rot(L) + tb(L) + 1| > 2g.$$

The left-hand side of this inequality is equal to $|sl(K) + 1 + 2rot(L)|$.  Thus, if $rot(L) \leq 0$, then this inequality is satisfied, as $sl(K) < -2g -1$.  So both $L_-^*$ and $L_+^*$ are loose, as required.
\end{proof}
% PPPPPPPPPP

\begin{remark}\label{overtwistedpositivesurgeries} There are numerous classes of knots that fall in this category.  For example, a non-trivial negative torus knot $(-p, q)$ in $(S^3,\xi_{\rm{std}})$, where $p, q \geq 2$, has maximal self-linking $\overline{sl} = -pq$, and $2g - 1 = pq - p - q$.  Since $p, q \geq 2$, we see that $-pq < -pq + p + q -2=-(2g-1)-2=-2g - 1$.  Note that the overtwistedness of contact $(+1)$-surgery on these knots was previously known, by a result of Lisca and Stipsicz \cite{LS:hfnotes}.\end{remark}

\begin{remark} The figure-eight knot in $(S^3,\xi_{\rm{std}})$ has maximum self-linking $-3$, so it is not covered by Theorem~\ref{thm:all OT surgeries}.  However, it can be shown using convex surface theory that $L^*_-$ and $L^*_+$ are loose Legendrian knots, and thus that all positive contact surgeries on any figure-eight in $(S^3,\xi_{\rm{std}})$ are overtwisted.  See \cite{conway:f8} for details, as the convex surface theory involved in the proof is not in the spirit of the current paper.\end{remark}

In the following theorem, we look at contact $(+n)$-surgeries for some knots which are not covered by Theorem~\ref{thm:all OT surgeries}.

% !!!!!!!!!!!!!!!!!!!!!!!!!
\begin{thm}\label{thm:n surgery OT}
Fix a positive integer $n \geq 1$.  If $L$ is a null-homologous Legendrian knot in $\Mxi$, $c_1(\xi)$ torsion, where $L$ is genus $g$, $tb(L) \leq -n-1$, and $|n\cdot rot(L)-(n-1)\cdot tb(L)| > n(2g-1) +tb(L)$, then contact $(+n)$-surgery on $L$ is overtwisted.
\end{thm}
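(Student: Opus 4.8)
The plan is to transplant the proof of Theorem~\ref{thm:all OT surgeries} (equivalently, of Theorem~\ref{overtwistedsurgery}, which is the case $n=1$) to contact $(+n)$-surgery. First I would realize contact $(+n)$-surgery $\xi^-_n$ on $L$ by a Legendrian surgery diagram: since the negative continued fraction of $-n/(n-1)$ is $[\underbrace{-2,\dots,-2}_{n-1}]$, the Ding--Geiges algorithm presents $\xi^-_n$ as contact $(+1)$-surgery on a Legendrian push-off $L_0$ of $L$ together with contact $(-1)$-surgeries on $L_1,\dots,L_{n-1}$, where $L_1$ is a push-off of $L$ stabilised negatively once and each $L_j$ ($j\ge 2$) is a Legendrian push-off of $L_{j-1}$. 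Thus $tb(L_0)=tb(L)$, $rot(L_0)=rot(L)$, while $tb(L_j)=tb(L)-1$, $rot(L_j)=rot(L)-1$ for $j\ge 1$, with topological surgery coefficients $tb(L)+1$ and $tb(L)-2$.

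Then, as in Theorem~\ref{thm:all OT surgeries}, I would take one further Legendrian push-off $\widehat L$ of $L$ along its contact framing and let $\widehat L^*\subset(M',\xi')$ be its image after the surgery; $\widehat L^*$ is Legendrian isotopic to the core of the surgery solid torus, has order $r=|tb(L)+n|$ in $H_1(M')$, and a genus-$g$ Seifert surface for $L$ serves as a rational Seifert surface for $\widehat L^*$ of Euler characteristic $1-2g$ (one boundary component wrapping $r$ times). Using Remark~\ref{remark:tb is topological}, $tb_\Q(\widehat L^*)$ can be read off from the topological surgery diagram after handle-sliding the $(-1)$-framed components over $L_0$ and blowing down, exactly as in Figure~\ref{fig:n surgery kirby diagram}; and $rot_\Q(\widehat L^*)$ comes from Lemma~\ref{lem:GO}, applied with the $n\times n$ linking matrix $N$ whose diagonal is $(tb(L)+1,tb(L)-2,\dots,tb(L)-2)$ and whose off-diagonal entries are $lk(L_0,L_j)=tb(L)$ and $lk(L_i,L_j)=tb(L)-1$ for $1\le i<j$, together with the bordered $(n+1)\times(n+1)$ matrix $N_0$; the determinants and the vector $N^{-1}(tb(L),\dots,tb(L))$ are evaluated by the same row operations that implement the handle slides.

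Finally, as in Theorem~\ref{thm:all OT surgeries}, a suitable stabilisation $\Lambda$ of $\widehat L^*$ (with sign chosen to match the $\xi^-_n$ convention, i.e.\ to record a negative basic slice) has the feature that the complement of a standard neighbourhood of $\Lambda$ is $(M\setminus N(L))$ with a negative basic slice glued out to the meridian slope of $L$; since the shortest path on the Farey tessellation from $tb(L)$ to $tb(L)+n$ passes through $\infty$ with a single basic slice from $tb(L)$ to $\infty$ (negative for $\xi^-_n$), this sutured manifold embeds in $(M',\xi')$, so its overtwistedness forces that of $(M',\xi')$. It therefore suffices, by Lemma~\ref{lem:S bound}, to show $-|tb_\Q(\Lambda)|+|rot_\Q(\Lambda)|>-\chi(\Lambda)/r=(2g-1)/r$; substituting the computed values, this inequality is precisely the hypothesis $|n\cdot rot(L)-(n-1)\cdot tb(L)|>n(2g-1)+tb(L)$. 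Hence $\Lambda$ is loose, the embedded sutured piece is overtwisted, and contact $(+n)$-surgery on $L$ is overtwisted.

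I expect the main obstacle to be the bookkeeping in the middle step: writing down the linking matrix of $L_0,\dots,L_{n-1}$ correctly (the single stabilisation of $L_1$ breaks the symmetry), computing $\det N$, $\det N_0$ and $N^{-1}$ in closed form, and then carrying all the signs — including which stabilisation of $\widehat L^*$ to take and how stabilisation acts on $tb_\Q$ and $rot_\Q$ — so that the failure of the Lemma~\ref{lem:S bound} bound lands exactly on $|n\cdot rot(L)-(n-1)\cdot tb(L)|>n(2g-1)+tb(L)$ and not on a statement off by a constant. A secondary point requiring care is checking that the overtwisted sutured manifold genuinely embeds in $(M',\xi')$ for every $n\ge 1$ (for $n=1$ one stabilises $\widehat L^*$ once to reach the meridian slope; for $n\ge 2$ the surgery already crosses it) and that $\chi(\widehat L^*)=1-2g$.
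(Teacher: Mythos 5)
Your set-up (the Ding--Geiges presentation of $\xi^-_n$, the use of Remark~\ref{remark:tb is topological} for $tb_\Q$, and Lemma~\ref{lem:GO} with the $n\times n$ linking matrix) matches the paper, but the final step has a genuine gap that breaks the proof for $n\geq 2$. Writing $t=tb(L)$, $r=rot(L)$: the image $\widehat L^*$ of a push-off of $L$ in the surgered manifold is \emph{not} isotopic to the core of a surgery solid torus, and the genus-$g$ Seifert surface of $L$ does \emph{not} give a rational Seifert surface for it of Euler characteristic $1-2g$ wrapping $|t+n|$ times --- that surface meets each of the $n$ surgery curves $|t|$ times and does not survive into the complement. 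In fact $\widehat L^*$ is a $(-n,1)$-cable of the surgery dual $K^*$, its order in homology is $|t+n|/m$ with $m=\gcd(n,|t+n|)$ (it can even be null-homologous, e.g.\ $n=2$, $t=-4$), and the correct rational Seifert surface, built from $n/m$ copies of the dual's rational Seifert surface together with $|t|/m$ meridian discs and $|nt|/m$ bands, has $\chi=\bigl(n(1-2g)+(n-1)t\bigr)/m$. Only for $n=1$ does this collapse to your claimed $\chi=1-2g$ and order $|t+1|$; this identification is exactly what the paper uses in Theorem~\ref{thm:all OT surgeries}, and it does not transplant.

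Because of this, your claimed punchline is false: with a single negative stabilisation and your values of $\chi$ and the order, the violated bound of Lemma~\ref{lem:S bound} reads $n\,|r-t-1|>2g-1+n-(n-1)t$, which is not the hypothesis $|nr-(n-1)t|>n(2g-1)+t$ and can fail when the hypothesis holds (e.g.\ $n=2$, $t=-3$, $g=1$, $r=0$: the theorem applies, but $-|tb_\Q(\Lambda)|+|rot_\Q(\Lambda)|=-1\not>1$, so no looseness conclusion). The paper instead stabilises the image knot $k\gg 0$ times with sign equal to the sign of $rot_\Q$, exploiting $tb_\Q=tn/(t+n)>0$, so that the left side stabilises to $\bigl(|rn-tn+t|+|tn|\bigr)/|t+n|$, and compares it with $-\chi$ of the cabled rational Seifert surface; only then does the inequality land exactly on $|nr-(n-1)t|>n(2g-1)+t$. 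Finally, your sutured-manifold embedding step is both mis-described for $n\geq 2$ (the complement of $\widehat L^*$ in the surgered manifold is not $M\setminus N(L)$ with one basic slice attached) and unnecessary: once the stabilised knot is loose, the overtwisted disc already lies in the surgered manifold, which is how the paper concludes.
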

% !!!!!!!!!!!!!!!!!!!!!!!!!
\begin{remark}
For $n=1$, this inequality is simply $|rot(L)| > 2g-1+tb(L)$. Compare this result to that obtained by Lisca and Stipsicz \cite[Proposition 1.4]{LS:hfnotes}, which shows that contact $(+1)$-surgery on $L$ in $(S^3,\xi_{\rm{std}})$ has vanishing contact invariant when $tb(L) \leq -2$.  Note also that for fixed $g$, the region in the $(tb,rot)$-plane that is excluded by the hypotheses of this theorem contains finitely many points.
\end{remark}
% PPPPPPPPPP
\begin{proof}
Let $L$ be a Legendrian knot with $tb(L) = t$ and $rot(L) = r$.  We will deal with the negatively stabilised flavour of contact $(+n)$-surgery (note that the positively stabilised variety is equivalent to the negatively stabilised surgery on the reverse orientation of $L$).  Let $L_1$ and $L_2'$ be push-offs of $L$.  Stabilise $L_2'$ once negatively to get $L_2$, and let $L_3, \ldots, L_n$ be push-offs of $L_2$.  Contact $(+n)$-surgery on $L$ is equivalent to contact $(+1)$-surgery on $L_1$ and contact $(-1)$-surgery on $L_2, \ldots, L_n$.  Let $L^*$ be the image of $L$ after the surgeries on $L_1, \ldots, L_n$.  According to Remark~\ref{remark:tb is topological}, we only need a topological diagram of the surgery to calculate $tb_\Q(L^*)$.  Thus, we can assume we are doing topological $t+n$ surgery on a single knot $K$ that has linking $t$ with $L$.  Thus we can set $$N = (t+n) \mbox{ and } N_0 = \matrixb{0}{t}{t}{t+n}.$$  So we calculate that $$tb_\Q(L^*) = t + \frac{\det N_0}{\det N} = t - \frac{t^2}{t+n} = \frac{tn}{t+n}$$

In order to calculate $rot_\Q(L^*)$, however, we must use the $N$ that comes from the contact surgery diagram.  Thus we have the $n \times n$ matrix given by
$$N = \left(\begin{array}{cccccc}t+1&t&t&\cdots&t&t \\t&t-2&t-1&\cdots&t-1&t-1 \\t&t-1&t-2&\cdots&t-1&t-1 \\ \vdots & \vdots & \vdots & \ddots & \vdots & \vdots \\ t&t-1&t-1&\cdots&t-2&t-1 \\ t&t-1&t-1&\cdots&t-1&t-2 \end{array}\right).$$  It can be verified that its inverse is given by
$$N^{-1} = \frac{1}{t+n}\left(\begin{array}{cccccc}n-(n-1)t & t&t&\cdots&t&t \\ t & 1-n-t&1&\cdots&1&1 \\ t & 1 & 1-n-t & \cdots & 1 & 1 \\ \vdots&\vdots&\vdots&\ddots&\vdots&\vdots \\ t & 1 & 1 & \cdots & 1-n-t & 1 \\ t & 1 & 1 & \cdots & 1 & 1-n-t \end{array}\right).$$
We can then calculate that $$\left(\begin{array}{c}r\\ r-1\\ r-1\\ \vdots \\ r-1\end{array}\right)\cdot N^{-1}\left(\begin{array}{c}t\\ t\\ t\\ \vdots \\ t \end{array}\right) = \frac{1}{t+n}\left(\begin{array}{c}r\\ r-1\\ r-1\\ \vdots \\ r-1\end{array}\right)\cdot \left(\begin{array}{c}tn \\ -t \\ -t \\ \vdots \\ -t \end{array}\right) = \frac{(r+n-1)\cdot t}{t+n}.$$  Finally, we can conclude that
$$rot_\Q(L^*) = r - \frac{(n+r-1)\cdot t}{t+n} = \frac{rn-tn+t}{t+n}.$$

We now consider $k$ positive or negative stabilisations $L^*_{\pm k}$ of $L^*$, and plug the Thurston--Bennequin number and rotation number of $L^*_{\pm k}$ into Lemma~\ref{lem:S bound} to show that that the complement of $L^*_{\pm k}$ is overtwisted, and hence that the result of surgery on $L$ is overtwisted..  Let $L^*_k$ be the $k$-fold stabilisation of $L^*$ with stabilisation sign equal to the sign of $rot_\Q(L^*)$ (with any choice if the rotation vanishes).  Then for large $k$,
\begin{align*}
-|tb_\Q(L^*_k)| + |rot_\Q(L^*_k)| &= |rot_\Q(L^*)| + k-|tb_\Q(L^*) - k| \\
&= \frac{|rn-tn+t|}{|t+n|} + k - \left(k-\frac{|tn|}{|t+n|}\right) \\
&= \frac{|rn-tn+t| + |tn|}{|t+n|}.
\end{align*}
The first equality is true because the sign of the stabilisation is chosen to agree with the sign of $rot_\Q(L^*)$, and the second equality is true because $k$ is large and $tb_\Q(L^*)$ is positive.

We now need to work out the genus of $L^*_k$, \textit{ie.\@} the genus of $L^*$, in order to calculate $\chi(L^*)$.  We see that in $M$, $L$ is a $(1, t)$ cable of $K$, the single knot on which we performed topological $(t+n)$-surgery above to get the same manifold as contact surgery on $L_1, \ldots, L_n$ (an $(r,s)$-cable is a cable composed of $r$ longitudes and $s$ meridians).  Although topologically $L$ is isotopic to $K$, in general, the image $L^*$ of $L$ in $M_{t+n}(K)$ is not isotopic to $K^*$, the surgery dual knot to $K$; if $n=1$, then it is true that a push-off of a Legendrian knot gives a framing to the surgery dual to contact $(+1)$-surgery on the original Legendrian (and this is also true for contact $(-1)$-surgery), but this is false for general $n$. We claim that $L^*$ is in fact a $(-n,1)$-cable of $K^*$.  This can be seen by calculating the image of the cable under the map gluing the surgery torus into $M\backslash N(K)$, in the coordinate system where the longitude of $K^*$ is the meridian of $K$.
$$\matrixb{0}{1}{1}{t+n}^{-1}\vect{1}{t} = \matrixb{-t-n}{1}{1}{0}\vect{1}{t} = \vect{-n}{1}.$$ Letting $m = \gcd(n,|t+n|)$, we see that this knot has order $|t+n|/m$ in $H_1(M_{t+n}(K);\Z)$.  Note also, that the boundary of the Seifert surface traces a $\vects{-t-n}{1}=\vects{|t+n|}{1}$ curve on the boundary of a neighbourhood of $K^*$.  So we write
$$\frac{|t+n|}{m}\cdot\vect{-n}{1} = \frac{-n}{m}\cdot\vect{|t+n|}{1} - \frac{t}{m}\cdot\vect{0}{1},$$ where on the right, the first summand is copies of the Seifert surface, and the second summand is copies of the meridian of $K^*$.  Thus, a rational Seifert surface for $L^*$ is composed of $n/m$ copies of the rational Siefert surface $\Sigma$ for $K^*$ and $|t|/m$ copies of a meridional compressing disc for $K^*$, with bands corresponding to the intersections of $\vects{-n}{1}$ with the $|t|/m$ meridians.  So
$$\chi(L^*) = \frac{n}{m}\chi(\Sigma) + \frac{|t|}{m} - \frac{|nt|}{m} = \frac{n(1-2g)+nt-t}{m},$$ since $t < 0$ and $n > 0$.  Thus if $$\frac{|rn-tn+t|+|tn|}{|t+n|} > -\frac{\chi(L^*_k)}{|t+n|/m} = \frac{n(2g-1)+t-nt}{|t+n|},$$ then $L^*_k$ is loose and the contact structure on $M_{t+n}(K)$ is overtwisted.  Then our inequality is equivalent to requiring $$|rn-tn+t| > n(2g-1) +t.$$  Hence under our hypotheses, $L^*_k$ is loose as required.
\end{proof}
% PPPPPPPPPP

% !!!!!!!!!!!!!!!!!!!!!!!!!
\begin{cor}
For every genus $g$ and every positive integer $n\geq 2$, there is a negative integer $t$ such that if $L$ is a null-homologous Legendrian knot of genus $g$ and $tb(L) \leq t$, then contact $(+n)$-surgery on $L$ is overtwisted.
\end{cor}
% !!!!!!!!!!!!!!!!!!!!!!!!!

The following conjecture is motivated by the desire to remove the bounds on rotation number in the above theorems.

\begin{conj}\label{lots of overtwisted surgeries} If $L$ is a null-homologous Legendrian knot with $tb(L) \leq -2$, then contact $(+n)$-surgery on $L$ is overtwisted, for any positive integer $n < |tb(L)|$. \end{conj}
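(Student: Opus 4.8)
The plan is to push the loose-knot technique of Theorems~\ref{thm:all OT surgeries} and~\ref{thm:n surgery OT} so that it covers all rotation numbers. As in the proof of Theorem~\ref{thm:n surgery OT}, realize contact $(+n)$-surgery on $L$ (negative flavor) as contact $(+1)$-surgery on a push-off $L_1$ of $L$ together with contact $(-1)$-surgery on push-offs $L_2,\dots,L_n$, one of which is stabilized once negatively. Let $L^*$ be the image of $L$ in the surgered manifold $(M_{tb(L)+n}(L),\xi')$; it is rationally null-homologous, and Lemma~\ref{lem:GO} computes $tb_\Q(L^*)$ and $rot_\Q(L^*)$, hence the rational invariants of its $k$-fold stabilizations $L^*_{\pm k}$. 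If for large $k$ the bound of Lemma~\ref{lem:S bound} is violated, then $L^*_{\pm k}$ is loose, its overtwisted complement embeds into the surgered manifold, and contact $(+n)$-surgery is overtwisted. By the computation in Theorem~\ref{thm:n surgery OT} this already settles the range $|n\,rot(L)-(n-1)\,tb(L)| > n(2g-1)+tb(L)$, so the remaining task is to close the complementary gap in $rot(L)$.

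For the missing range the idea is to test a more favorable knot. Natural candidates are (i) the Legendrian dual knots $L_1^*,\dots,L_n^*$ of the surgery curves, whose rational Thurston--Bennequin and rotation numbers --- again from Lemma~\ref{lem:GO} --- combine the $rot(L_i)$ in a different pattern and, for a suitable index, may lie in the loose regime for every value of $rot(L)$; or (ii) Legendrian cables of $L^*$ by curves parallel to the dividing set of $\partial N(L)$, whose rational Euler number and rotation number can be adjusted while $tb_\Q$ stays negative. For each candidate one would compute the rational invariants together with its Euler characteristic by a rational-Seifert-surface/cabling count as in the proof of Theorem~\ref{thm:n surgery OT}, and verify that when $tb(L)\le-2$ and $n<|tb(L)|$ at least one candidate violates Lemma~\ref{lem:S bound} irrespective of $rot(L)$. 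A cleaner alternative --- the route taken in \cite{conwaythesis} for the figure-eight knot --- is to drop Heegaard Floer invariants entirely and argue by convex surface theory: after attaching the first negative bypass to $\partial N(L)$ to reach the meridional slope and gluing in the surgery solid torus, the inequality $n<|tb(L)|$ should produce either an inconsistently signed pair of basic slices (hence an overtwisted disc, by the shortening principle recalled after Remark~\ref{remark:basic slice}) or a convex annulus with boundary-parallel dividing arcs destabilizing the core of the surgery torus onto an overtwisted disc.

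The main obstacle is the control of the rational rotation number. By Remark~\ref{remark:tb is topological}, $tb_\Q$ is purely topological and Kirby-calculus friendly, but $rot_\Q$ is genuinely contact-geometric and bound to the chosen surgery presentation, while Lemma~\ref{lem:S bound} is a one-sided inequality that is hardest to break precisely when $|rot_\Q(L^*)|$ is small relative to $|tb_\Q(L^*)|$ --- exactly the regime Theorem~\ref{thm:n surgery OT} leaves open. So the crux is to produce, for every Legendrian $L$ with $tb(L)\le-2$ and every $n<|tb(L)|$, an auxiliary Legendrian knot in the surgered manifold whose rational invariants are forced into the loose range independently of $rot(L)$; the convex-surface-theory approach avoids $rot_\Q$ altogether but trades it for a delicate bypass analysis of the surgery torus, which is where I expect the genuine difficulty to lie.
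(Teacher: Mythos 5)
This statement is stated in the paper as a conjecture precisely because the paper does not prove it, and your proposal does not close it either: it is a strategy outline whose decisive step is explicitly left open. The computation you import from Theorem~\ref{thm:n surgery OT} only certifies overtwistedness when $|n\,rot(L)-(n-1)\,tb(L)| > n(2g-1)+tb(L)$, and for the complementary (small-rotation) range you offer candidates --- the surgery duals $L_1^*,\dots,L_n^*$, cables of $L^*$, or a convex-surface-theory bypass analysis --- without carrying out any of the corresponding computations or giving a reason why one of them must violate Lemma~\ref{lem:S bound} independently of $rot(L)$. That lemma is a one-sided criterion: when the rational invariants of a candidate fail to break the bound, nothing follows (the knot may or may not be loose, and the surgered manifold may or may not be tight), so the method as described can only ever succeed case by case, and in the regime Theorem~\ref{thm:n surgery OT} leaves open there is no evidence it succeeds at all. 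This is not a fixable presentation issue but the actual open content of the conjecture.

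A second, more structural gap: your plan never identifies where the hypothesis $n < |tb(L)|$ would enter. It must enter essentially, because the last proposition of Section~\ref{sec:OT surgeries} produces, for every $t$, null-homologous knots in $(S^3,\xi_{\rm{std}})$ with $\overline{tb} = -t$ for which contact $(+t)$-surgery is tight with non-vanishing contact invariant; so any bound-breaking or bypass argument that does not use $n < |tb(L)|$ in a crucial way would prove too much and contradict these examples. As written, both the Heegaard Floer route (via Lemma~\ref{lem:GO} and Lemma~\ref{lem:S bound}) and the proposed convex-surface alternative treat $n$ only through the formulas for $tb_\Q$, $rot_\Q$, and $\chi$, and nothing in the sketch isolates the boundary case $n = |tb(L)|$ as the place where the argument must fail. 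Until you can exhibit, for every $L$ with $tb(L)\le -2$, every $rot(L)$, and every $n<|tb(L)|$, a concrete auxiliary object whose invariants (or whose bypass decomposition) forces an overtwisted disc --- and show that the same construction breaks down at $n=|tb(L)|$ --- the statement remains a conjecture.
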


In contrast to the above results, we present an infinite family of Legendrian knots with arbitrarily low maximum Thurston--Bennequin number that admit tight positive contact surgeries.  These surgeries are topological $0$-surgeries, and so don't fit into the hypotheses of Conjecture~\ref{lots of overtwisted surgeries}.

% !!!!!!!!!!!!!!!!!!!!!!!!!
\begin{prop}
For every positive integer $t$, there is an infinite class of null-homologous Legendrian knots in $(S^3,\xi_{\rm{std}})$ with $\overline{tb} = -t$, such that contact $(+t)$-surgery on each knot is tight.
\end{prop}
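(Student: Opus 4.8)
The plan is to exhibit, for each fixed positive integer $t$, an explicit infinite list $K_1, K_2, \dots$ of pairwise non-isotopic knot types in $S^3$ with $\overline{tb}(K_m) = -t$, together with maximal Thurston--Bennequin Legendrian representatives $L_m$, and to prove that contact $(+t)$-surgery on each $L_m$ is tight. Since $tb(L_m) = \overline{tb}(K_m) = -t$, the topological surgery coefficient of this surgery is $0$, so by Theorem~\ref{Transverse=Legendrian} contact $(+t)$-surgery on $L_m$ is inadmissible transverse $0$-surgery on the positive transverse push-off $T_m$, which has $sl(T_m) = tb(L_m) - rot(L_m) = -t - rot(L_m)$. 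The first point to check is that this falls outside the hypotheses of Theorems~\ref{thm:all OT surgeries} and~\ref{thm:n surgery OT}, which would otherwise force the result to be overtwisted: the former needs $sl(T_m) < -2g(K_m) - 1$ and the latter needs $tb(L_m) \le -t-1$, and I would choose the $L_m$ with small rotation number and with $g(K_m)$ large relative to $t$ precisely so that neither hypothesis is met — so no contradiction arises, but tightness must still be established directly, which is the content of the proposition.

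To build the family I would use satellite operations. Fix a pattern $P \subset S^1 \times D^2$ — for concreteness a negatively sloped cable pattern — and let $K_m = P(C_m)$ for an infinite family of distinct companion knots $C_m$; the maximal Thurston--Bennequin number of such a satellite is controlled by the cabling formula of Etnyre--LaFountain--Tosun \cite{ELT:cables}, so $P$ and the $C_m$ can be arranged so that $\overline{tb}(P(C_m)) = -t$ for all $m$, realised by a Legendrian representative whose rotation number has the same parity as $t+1$. When $t = 1$ there is an especially clean sub-family: any knot bounding an exact Lagrangian disc in $(B^4, \omega_{\rm std})$ has $\overline{tb} = -1$ by the Lagrangian adjunction inequality, and the trace of such a disc caps off the Ding--Geiges presentation of contact $(+1)$-surgery on its maximal-$tb$ representative to a Stein domain, so that surgery is Stein fillable, hence tight; to the extent that only finitely many knot types arise this way, the cabling construction above still supplies an infinite family.

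For $t \ge 2$ no Lagrangian filling of $L_m$ exists (it would force $\overline{tb}(K_m) \in \{-1\} \cup \Z_{\ge 0}$), so tightness would instead come from non-vanishing of the Heegaard Floer contact invariant of the surgered manifold via Golla's criterion (Theorem~\ref{transversegolla}): one engineers the family so that the slice--Bennequin bound is sharp, $sl(T_m) = 2\tau(K_m) - 1$ (this dictates the parity choice above, and for $t$ odd forces $rot(L_m) = 0$), so that $\tau(K_m) = \nu(K_m)$, and so that $\tau(K_m) \le 0$ (which makes the inequality on the surgery coefficient in Golla's criterion automatic). Then $c$ of contact $(+t)$-surgery on $L_m$ is nonzero and the surgery is tight. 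The main obstacle is exactly this last step: simultaneously pinning $\overline{tb}$, $\tau$, and $\nu$ to the required values along an entire infinite family, using the standard cabling formulae for $\tau$ and $\nu$ under the chosen satellite operation; in any residual cases where Golla's hypotheses cannot all be arranged (for instance if $\tau = \nu$ fails), one falls back on convex surface theory as in \cite{conwaythesis}, where this is carried out for the figure-eight knot.
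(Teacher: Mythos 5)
There is a genuine gap: you have identified the right tool (Golla's criterion, Theorem~\ref{transversegolla}), but you never actually produce the infinite family, and the step you defer is precisely the content of the proposition. Your plan is to realise $K_m$ as satellites $P(C_m)$ and then ``engineer'' the family so that $\overline{tb}=-t$, $sl(T_m)=2\tau(K_m)-1$, and $\tau(K_m)=\nu(K_m)$ all hold simultaneously; you then concede that arranging these three conditions at once along an infinite family is ``the main obstacle'' and offer only a vague fallback to convex surface theory. No cabling formula computation is carried out, no companion knots are specified, and nothing is verified, so as written this is a strategy outline rather than a proof. (Your $t=1$ remark about Lagrangian-slice knots is a legitimate alternative route to tightness of $(+1)$-surgery via Stein fillability, but even there the infiniteness of the family again leans on the unexecuted satellite construction; and the parity bookkeeping for $rot$ in the general case is off --- with $\tau=0$ and $sl=-1$ one needs $rot(L_m)=1-t$, not $rot=0$ for $t$ odd.)

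The difficulty you flag dissolves if you restrict to slice knots: $g_4=0$ forces $\tau=\nu=0$, so Golla's three conditions reduce to the single requirement $\overline{sl}=-1$, and the surgery-coefficient inequality is automatic. The paper's proof then builds the family completely explicitly: $9_{46}$ has a Legendrian representative with $(tb,rot)=(-1,0)$ and $8_{20}$ one with $(-2,-1)$, and for $K_{m,n}=(\mathop{\varhash}^m 8_{20})\mathop{\varhash}(\mathop{\varhash}^n 9_{46})$ sliceness is preserved, $rot$ is additive, and $\overline{tb}$ adds with a $+1$ correction, giving $\overline{tb}(K_{m,n})=-m-1$ and a maximal-$tb$ representative with $rot=-m$, hence $\overline{sl}=-1$. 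Fixing $m=t-1$ and letting $n$ vary gives the infinite family with $\overline{tb}=-t$ whose contact $(+t)$-surgeries are tight. To repair your argument you would either need to carry out this kind of explicit verification for your satellite family (computing $\overline{tb}$, $\tau$, $\nu$, and the sharpness of the self-linking bound under the chosen cabling), or adopt the slice-knot reduction, which is the short route.
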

% !!!!!!!!!!!!!!!!!!!!!!!!!
% PPPPPPPPPP
\begin{proof}
We use Theorem~\ref{transversemt} to prove that surgeries are tight.  We look for knots that are slice (\textit{ie.\@} $g_4 = 0$), as this implies that $\tau = \nu = 0$, and so the only requirement to have a positive contact surgery that is tight is that $\overline{sl} = 2\tau - 1 = -1$.  If $\overline{tb} = -t$, then Theorem~\ref{transversemt} says that contact $(+t)$-surgery is tight (and has non-vanishing contact invariant).

For $t = 1$, the slice knot $9_{46}$ has a Legendrian representative with $tb = -1$ and $rot = 0$.  For $t = 2$, the slice knot $8_{20}$ has a Legendrian representative with $tb = -2$ and $rot = -1$.  Now consider the knots $$K_{m,n} = \left(\mathop{\varhash}^{m} 8_{20}\right) \mathop{\varhash} \left(\mathop{\varhash}^n 9_{46}\right)$$ for non-negative integers $m$ and $n$, where the connect sum of 0 objects is taken to be the unknot.  Under connect sum, the slice genus is additive, so $K_{m,n}$ is slice for all $m, n$.  According to \cite{EH:knots2}, the rotation number of Legendrian knots under connect sum is additive, and the Thurston--Bennequin number adds like $$\overline{tb}(K_1 \mathop{\varhash} K_2) = \overline{tb}(K_1) + \overline{tb}(K_2) + 1.$$  Thus $$\overline{tb}(K_{m,n})  = -m-1$$ and there is a Legendrian representative with maximal Thurston--Bennequin and $$rot = -m.$$  Thus $\overline{sl}(K_{m,n}) = -1$, as required.
\end{proof}
% PPPPPPPPPP

\bibliography{references}{}
\bibliographystyle{plain}
\end{document}